\theoremstyle{plain}
\newtheorem{mythm}{Theorem}[section]
\newtheorem{myprop}[mythm]{Proposition}
\newtheorem{mylemma}[mythm]{Lemma}
\newtheorem{mycor}[mythm]{Corollary}
\newtheorem{mydef}[mythm]{Definition}
\newtheorem{introthm}{Theorem}
\theoremstyle{remark}
\newtheorem*{myex}{Example}
\newtheorem*{myrmk}{Remark}
\newcommand{\sgr}{\le}
\newcommand{\gen}[1]{\langle#1\rangle}
\newcommand{\ol}[1]{\overline{#1}}
\newcommand{\rar}{\rightarrow}
\newcommand{\restr}[2]{#1\big|_{#2}}
\newcommand{\core}[1]{\text{core}(#1)}
\newcommand{\bcore}[1]{\text{core}_*(#1)}
\newcommand{\cov}[1]{\text{cov}(#1)}
\newcommand{\grafo}{F_n\text{-labeled graph}}
\newcommand{\grafos}{F_n\text{-labeled graphs}}
\newcommand{\rank}[1]{\text{rank}(#1)}
\newcommand{\im}[1]{\text{im}(#1)}
\newcommand{\fold}[1]{\text{fold}(#1)}
\newcommand{\subd}[2]{\text{subd}_{#1}(#2)}
\newcommand{\FF}[1]{\textit{FF}_{#1}}
\newcommand{\fine}{\text{fine}}
\title{A fine property of Whitehead's algorithm}
\author{
Dario Ascari \thanks{Ascari was funded by the Engineering and Physical Sciences Research Council.}\\
{\small \textit{Mathematical Institute, Andrew Wiles Building,}}\\
{\small \textit{University of Oxford, Oxford OX2 6GG, UK}}\\
{\small e-mail: \texttt{ascari@maths.ox.ac.uk}}\\
\\
}
\begin{document}

\maketitle

\begin{abstract}
We develop a refinement of Whitehead's algorithm for primitive words in a free group. We generalize to subgroups, establishing a strengthened version of Whitehead's algorithm for free factors. We make use of these refinements in proving new results about primitive elements and free factors in a free group. These include a relative version of Whitehead's algorithm, and a criterion that tests whether a subgroup is a free factor just by looking at its primitive elements. We develop an algorithm to determine whether or not two vertices in the free factor complex have distance $d$ for $d=1,2,3$, as well as $d=4$ in a special case.
\end{abstract}

\begin{center}
\small \textit{Keywords:} Free Groups, Whitehead's Algorithm, Free Factors\\
\small \textit{2010 Mathematics subject classification:} 20E05 (20F65)
\end{center}


\section{Introduction}

An algorithm to determine whether an element of a free group is primitive or not was first found by Whitehead in 1936; it is based on the following theorem:

\begin{introthm}[Whitehead]\label{introWhitehead}
Let $w$ be a cyclically reduced word, which is primitive but not a single letter. Then there is a Whitehead automorphism $\varphi$ such that the cyclic length  of $\varphi(w)$ is strictly smaller than the cyclic length of $w$.
\end{introthm}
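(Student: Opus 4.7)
The plan is to apply Whitehead's peak reduction technique. Since $w$ is primitive, there exists $\psi \in \Aut(F_n)$ with $\psi(w) = x$, a single basis letter. Using that Whitehead automorphisms generate $\Aut(F_n)$, I factor $\psi = \varphi_k \circ \cdots \circ \varphi_1$ into Whitehead automorphisms and set $w_i := (\varphi_i \circ \cdots \circ \varphi_1)(w)$, so that $w_0 = w$, $w_k = x$, and the associated cyclic lengths $a_i := \abs{w_i}$ satisfy $a_0 = \abs{w} > 1$ and $a_k = 1$.

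Next I invoke the peak reduction lemma: a strict peak, meaning an index $i$ with $a_{i-1} < a_i > a_{i+1}$, can be eliminated by replacing $\varphi_{i+1} \circ \varphi_i$ by a different composition of Whitehead automorphisms whose intermediate words all have cyclic length strictly less than $a_i$. Iterating produces a sequence with no strict peaks, and the standard consequence is that every term satisfies $a_i \leq \max(a_0, a_k) = a_0$. In particular $a_1 \leq a_0$, and the candidate Whitehead automorphism for the statement is $\varphi_1$.

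The main obstacle is upgrading $a_1 \leq a_0$ to the strict inequality $a_1 < a_0$ demanded by the statement, because peak reduction does not eliminate \emph{level moves} that preserve cyclic length. The classical remedy is to pass through the \emph{Whitehead graph} $W(w)$, whose vertices are the $2n$ signed basis letters and whose edges encode consecutive letter pairs in the cyclic word $w$. For any Whitehead automorphism $\varphi$, the difference $\abs{\varphi(w)} - \abs{w}$ is computable from the partition of the vertices of $W(w)$ determined by $\varphi$, and in particular $\abs{\varphi(w)} < \abs{w}$ whenever this partition corresponds to a cut vertex of $W(w)$ or to a disconnection of $W(w)$. The statement thus reduces to the combinatorial dichotomy: if $w$ is primitive with $\abs{w} > 1$, then $W(w)$ either has a cut vertex or is disconnected. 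Establishing this dichotomy is the main technical hurdle; it is proved by assuming $W(w)$ is $2$-connected and then showing that level Whitehead moves preserve $2$-connectedness, so that, combined with the unimodality from peak reduction, no composition of Whitehead automorphisms could reduce $w$ to a single letter, contradicting primitivity.
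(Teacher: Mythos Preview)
Your route via peak reduction is the classical alternative to the paper's argument, and you correctly isolate the cut-vertex dichotomy for $W(w)$ as the crux. The gap lies in your proposed proof of that dichotomy. For the induction ``$2$-connected $\Rightarrow$ stuck at length $\abs{w}$'' to start, you tacitly need that a $2$-connected Whitehead graph forbids \emph{every} length-reducing Whitehead move; only then is the first step of your peak-reduced chain forced to be level, so that your preservation claim can be invoked. That implication is false: the length change under $(A,a)$ is $e(A,A^c\setminus\{a\})-e(A,a)$ with edges counted \emph{with multiplicity}, whereas $2$-connectedness is a property of the underlying simple graph. It is easy to produce cyclically reduced words whose simple Whitehead graph is $2$-connected while some vertex $a$ satisfies $e(A,a)>e(A,A^c\setminus\{a\})$ for a suitable $A$. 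So the first move need not be level, and your preservation claim (itself left unproved) is never reached.

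The standard repair stays within peak reduction but uses its full strength: the Higgins--Lyndon lemma eliminates not only strict peaks $a_{i-1}<a_i>a_{i+1}$ but also half-peaks $a_{i-1}\le a_i\ge a_{i+1}$ with one strict inequality, producing a factorisation whose length sequence is strictly decreasing to the minimum, then constant, then strictly increasing. Since the minimum over the orbit of a primitive element is $1<\abs{w}$, the very first factor already strictly shortens $w$, and the Whitehead-graph detour is unnecessary. The paper, by contrast, avoids peak reduction entirely: it proves the cut-vertex property (Theorem~\ref{cutvertex}) by extending $w$ to a basis, folding the associated labelled graph down to the $n$-rose, and reading off a cut vertex from the shape of the penultimate graph in the folding sequence. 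That argument has the additional payoff of yielding the refinement of Theorem~\ref{introfinesse}, which the paper notes seems difficult to extract from peak-reduction techniques.
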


Theorem \ref{introWhitehead} has been proved and studied in several different ways over the years. Whitehead's original proof involves working with three-manifolds, performing surgery on embedded paths and surfaces, see \cite{Whitehead}. Of particular importance are the peak-reduction techniques introduced by Rapaport in \cite{Rapaport}; see also \cite{Lyndon} for a simplified version of the argument. Another surprisingly short proof, based on Stallings' folding operations, appeared recently in \cite{Heusener}.

\

Let $x_1,...,x_n$ be a fixed basis for $F_n$. We recall that a Whitehead automorphism $\varphi$ is an automorphism such that, for some $a\in\{x_1,...,x_n,\ol{x_1},...,\ol{x_n}\}$, we have $\varphi(a)=a$ and $\varphi(x_j)\in\{x_j,ax_j,x_j\ol{a},ax_j\ol{a}\}$ for each other generator $x_j\not=a,\ol{a}$. A generic element $w\in F_n$ consists of a (reduced) sequence of symbols in $\{x_1,...,x_n,\ol{x_1},...,\ol{x_n}\}$, and in order to obtain the image $\varphi(w)$, we can just apply $\varphi$ letter by letter to the sequence of symbols (and then reduce the resulting word). In the present paper, we shall build on the following refinement of Whitehead's theorem \ref{introWhitehead}.

\begin{introthm}\label{introfinesse}
The automorphism in theorem \ref{introWhitehead} can be chosen in such a way that every letter $a$ or $\ol a$, that is added when we apply $\varphi$ to $w$ letter by letter, immediately cancels (in the cyclic reduction process).
\end{introthm}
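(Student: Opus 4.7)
The strategy is to translate both Theorem~\ref{introWhitehead} and the ``fine'' property into the language of the Whitehead graph $W(w)$ and then exhibit a single cut of $W(w)$ realising both. I would take $W(w)$ to have vertex set $\{x_1^{\pm 1},\dots,x_n^{\pm 1}\}$ and one edge between $\ol{y_i}$ and $y_{i+1}$ for every cyclic seam $y_iy_{i+1}$ of $w$. A Whitehead automorphism $\varphi$ with multiplier $a$ corresponds to a subset $A$ with $a\in A$, $\ol a\notin A$, via $y\in A \Leftrightarrow \varphi(y)$ starts with $a$ (equivalently $\ol y\in A \Leftrightarrow \varphi(y)$ ends with $\ol a$). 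A direct seam-by-seam count yields the standard length-change formula
\[|\varphi(w)|-|w| \;=\; |\partial A|-\deg_{W(w)}(a),\]
so Theorem~\ref{introWhitehead} amounts to the existence of a cut with $|\partial A|<\deg_{W(w)}(a)$.

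Next I would unpack what ``fine'' means in this language. An added letter $a$ prepended to $\varphi(y_i)$ (for $y_i\in A\setminus\{a\}$) cancels precisely when the $W(w)$-edge attached to the preceding seam $y_{i-1}y_i$ has both endpoints in $A$; symmetrically for appended $\ol a$'s. Quantifying over every $y\in A\setminus\{a\}$ and every occurrence of $y$ in $w$, the fine condition for $\varphi$ becomes: every edge of $\partial A$ is incident to $a$; or equivalently, $A\setminus\{a\}$ is a union of connected components of $W(w)\setminus\{a\}$.

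It therefore suffices to produce a multiplier $a$ and a connected component $K$ of $W(w)\setminus\{a\}$ with $\ol a\notin K$ and at least one edge from $a$ to $K$: then $A=\{a\}\cup K$ is automatically fine, and $|\partial A|=\deg_{W(w)}(a)-e(a,K)<\deg_{W(w)}(a)$. To produce such a pair $(a,K)$ I would start from the length-reducing cut $A_0$ supplied by Theorem~\ref{introWhitehead} and decompose $|\partial A_0|<\deg_{W(w)}(a)$ across the components $C_1,\dots,C_r$ of $W(w)\setminus\{a\}$; the inequality forces some $C_j$ to meet $A_0$ while being joined to $a$ by at least one edge. The main obstacle is the borderline case in which the only such $C_j$ is the component $C_{j_0}$ containing $\ol a$; here I would either switch the multiplier (Theorem~\ref{introWhitehead} does not fix $a$) or drill into a finer cut-vertex structure of $C_{j_0}$ to separate $\ol a$ from the neighbours of $a$, mirroring the cut-vertex refinements that already appear in the standard proofs of Whitehead's theorem.
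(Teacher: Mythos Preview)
Your translation of the fine property into Whitehead-graph language is correct and matches the paper exactly: the move $\varphi$ with multiplier $a$ and acting set $A\setminus\{a\}$ is fine if and only if $A\setminus\{a\}$ is a union of connected components of $W(w)\setminus\{a\}$, and then strict length-reduction asks only for at least one edge from $a$ into that union.

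The genuine gap is in how you propose to \emph{produce} such a pair $(a,A)$. You take as input only the bare conclusion of Theorem~\ref{introWhitehead}---some length-reducing cut $(A_0,a)$ exists---and try to upgrade it. But ``there is a length-reducing Whitehead move'' does \emph{not} imply ``there is a fine one''. For $w=\ol y\,x\,\ol y\,x\,\ol y\,x\,y\,x\,\ol y$ in $F_2$ one computes $\deg_{W(w)}(x)=4$ while the cut $A_0=\{x,y\}$ has $|\partial A_0|=3$, so $(A_0,x)$ reduces length; yet $W(w)\setminus\{v\}$ is connected and contains $\ol v$ for every vertex $v$, so no vertex is a cut vertex and hence no fine length-reducing move exists for \emph{any} multiplier. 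Your ``borderline case'' is therefore not a corner case to be patched but an actual obstruction, and neither ``switch the multiplier'' nor ``drill into $C_{j_0}$'' can rescue it for such $w$. (Of course this $w$ is not primitive---precisely the point.)

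What makes the theorem true is primitivity, used directly rather than filtered through Theorem~\ref{introWhitehead}. The paper first proves, via a Stallings-folding argument (Theorem~\ref{cutvertex}), that the Whitehead graph of a primitive word of length $>1$ always has a cut vertex $a$. Once that is in hand, one simply takes $A$ to be a component of $W(w)\setminus\{a\}$ not containing $\ol a$; this is fine by construction and strictly length-reducing because $a$ has at least one edge into it. So your graph reformulation is the right endpoint, but the existence of the cut vertex must be established from primitivity itself, not bootstrapped from an arbitrary length-reducing move.
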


Theorem \ref{introfinesse} can be deduced fairly directly from Whitehead's original argument \cite{Whitehead}; however, we were not able to find this statement in literature. It can also be derived with the techniques of \cite{Heusener}, as will be shown in the body of the present paper. It is difficult to imagine how theorem \ref{introfinesse} might be proved with the peak-reduction techniques of \cite{Rapaport}.

\

We make use of theorems \ref{introWhitehead} and \ref{introfinesse} to prove the following theorem about free factors in a free group:

\begin{introthm}\label{introprimitiveinfreefactors}
Let $H\sgr F_n$ be a finitely generated subgroup. Suppose that every element of $H$ which is primitive in $H$ is also primitive in $F_n$. Then $H$ is a free factor.
\end{introthm}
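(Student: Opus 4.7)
I propose to prove the contrapositive by a Whitehead-style descent, using Theorem~\ref{introfinesse} as the principal tool. First note that the hypothesis is invariant under $\Aut(F_n)$: if $\psi\in\Aut(F_n)$ and $h'\in\psi(H)$ is primitive in $\psi(H)$, then $\psi^{-1}(h')$ is primitive in $H$, hence by hypothesis primitive in $F_n$, and so $h'$ is primitive in $F_n$. The conclusion ``$H$ is a free factor'' is plainly $\Aut(F_n)$-invariant as well.

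Define a complexity $c(H)$, say the number of edges of the Stallings core graph of $H$ (equivalently, the total cyclic length of a Nielsen-reduced free basis of $H$), and choose a representative of the $\Aut(F_n)$-orbit of $H$ that minimises $c$; replace $H$ by this minimiser. The key claim is that any such minimal $H$ has its core graph equal to a wedge of $\rank{H}$ distinctly-labelled loops at a single vertex, so that $H$ is generated by a subset of $\{x_1,\dots,x_n\}$ and is in particular a free factor.

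Assume for contradiction that the minimal $H$ is not of this form. Then one can read off an element $h\in H$ which is primitive in $H$ and has cyclic length at least $2$ in $F_n$ (for example, by selecting a suitable primitive loop in the core graph $\Gamma_H$ that traverses more than one edge). By hypothesis $h$ is primitive in $F_n$, so Theorem~\ref{introfinesse} produces a Whitehead automorphism $\phi$ satisfying $|\phi(h)|<|h|$ and such that every letter added while computing $\phi(h)$ immediately cancels in the cyclic reduction. If one can show that this same $\phi$ does not increase the cyclic length of any element of some Nielsen-reduced free basis of $H$ containing $h$, then $c(\phi(H))<c(H)$, contradicting minimality.

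The main obstacle is precisely this last synchronisation step. The ``fine'' cancellation clause of Theorem~\ref{introfinesse} is the decisive ingredient: it prevents $\phi$ from inserting letters into $h$ in a wasteful way, and hence, via the folding structure of the Stallings graph, constrains the effect of $\phi$ on the rest of $H$. To implement this, one completes $h$ to a Nielsen-reduced basis $h,h_2,\dots,h_k$ of $H$, observes that each $h_i$ is primitive in $H$ and therefore, by hypothesis, primitive in $F_n$, and applies Theorem~\ref{introfinesse} (or the strengthened subgroup version of Whitehead's algorithm alluded to in the abstract) to argue that a common $\phi$ can be chosen that reduces $h$ strictly while not inflating the other $h_i$. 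Carrying out this simultaneous reduction---likely via a peak-reduction argument tailored to subgroups---is where the real work of the proof lies.
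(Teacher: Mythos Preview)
Your proposal has a genuine gap precisely where you locate it: the ``synchronisation step'' is never carried out. Knowing that each basis element $h,h_2,\dots,h_k$ is individually primitive in $F_n$ does \emph{not} give you a single Whitehead automorphism that shortens one of them while not lengthening the others, and the fine property of Theorem~\ref{introfinesse} applied to $h$ alone says nothing about what $\varphi$ does to the $h_i$. Invoking a ``peak-reduction argument tailored to subgroups'' is not a proof; indeed, the paper explicitly remarks that peak-reduction techniques seem ill-suited to the fine property, so this is not a promising avenue.

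The paper's proof avoids this obstacle by a different architecture. It runs a \emph{double} induction: first on $\rank{H}$, then on the length of a single word. The rank induction is the key reduction you are missing: if $H=\gen{w_1,\dots,w_{k+1}}$ satisfies the hypothesis, then so does $H'=\gen{w_1,\dots,w_k}$ (primitives of $H'$ are primitive in $H$, hence in $F_n$), so by induction $H'$ is already a free factor. Applying an automorphism, one may assume $H=\gen{x_1,\dots,x_k,w}$. Now the problem is to shorten $w$ while \emph{fixing} $x_1,\dots,x_k$ --- a much more tractable synchronisation. The trick is to apply Theorem~\ref{introfinesse} not to $w$ but to a carefully designed primitive $v=x_1wx_1z\in H$, where $z$ is a word in $x_1,\dots,x_k$ whose Whitehead graph connects every pair of vertices in $\{x_1,\dots,x_k,\ol{x_1},\dots,\ol{x_k}\}$. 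This density forces the set $A$ of the resulting Whitehead automorphism $(A,a)$ either to contain or to avoid all of $\{x_1,\dots,x_k,\ol{x_1},\dots,\ol{x_k}\}$, so $\varphi$ fixes (or uniformly conjugates) the $x_i$, and a short case analysis shows it strictly shortens $w$. Your direct attack on the core graph of $H$ does not produce any analogous control.
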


We point out that the additional property of Whitehead's algorithm really plays a key role in the proof of theorem \ref{introprimitiveinfreefactors}.

\

We also generalize the fine property of theorem \ref{introfinesse} to subgroups. Generalizations of Whitehead's algorithm for free factors already exist, as shown in \cite{Gersten}, but the known proofs are based on the peak-reduction techniques, which, as we have noted, do not seem appropriate to our refinement. We here introduce the concept of Whitehead graph for a subgroup, and we generalize the ideas of \cite{Heusener} in order to get a refined statement of Whitehead's algorithm for subgroups. The standard statement is the following theorem \ref{introWhitehead2} and we add the fine property in theorem \ref{introfine}.

\begin{introthm}\label{introWhitehead2}
Let $H\sgr F_n$ be a free factor and suppose $\core{H}$ has more than one vertex. Then there is a Whitehead automorphism $\varphi$ such that $\core{\varphi(H)}$ has strictly fewer vertices and strictly fewer edges than $\core{H}$.
\end{introthm}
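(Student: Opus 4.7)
The plan is to generalise the Stallings-folding approach of \cite{Heusener} from cyclic words to subgroups. First, I would introduce a Whitehead graph $\mathrm{Wh}(H)$ for the subgroup $H$. For each vertex $v$ of $\core{H}$, record the multiset $L_v$ of labels of half-edges at $v$ (elements of $\{x_1^{\pm 1},\dots,x_n^{\pm 1}\}$). Define $\mathrm{Wh}(H)$ to have vertex set $\{x_1^{\pm 1},\dots,x_n^{\pm 1}\}$ and, for each vertex $v$ of $\core{H}$, one edge between each pair of distinct labels appearing in $L_v$ (with multiplicities). In the rank-one case this reduces to the classical Whitehead graph of a cyclically reduced word, and one recovers Theorem \ref{introWhitehead}.

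Second, I would use the hypothesis that $H$ is a free factor to produce a Whitehead cut. Since $H$ is a free factor, $\core{H}$ can be completed by attaching additional edges to a connected graph $\Gamma$ that Stallings-folds onto the rose $R_n$. The structure of this completion, combined with the fact that $\core{H}$ is already folded, imposes a combinatorial restriction on $\mathrm{Wh}(H)$: there must exist a proper nonempty subset $A \subseteq \{x_1^{\pm 1},\dots,x_n^{\pm 1}\}$ and a multiplier $a \in A$ with $\ol{a}\notin A$ such that the edges of $\mathrm{Wh}(H)$ joining $A\setminus\{a\}$ to its complement in $\{x_1^{\pm 1},\dots,x_n^{\pm 1}\}\setminus\{a\}$ are all incident to $a$ or $\ol{a}$. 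This is the cut producing the Whitehead automorphism $\varphi=\varphi_{A,a}$.

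Third, I would verify that $\core{\varphi(H)}$ has strictly fewer edges than $\core{H}$. On the level of core graphs, applying $\varphi$ corresponds to inserting new $a^{\pm 1}$-edges at certain half-edges of $\core{H}$ and then folding; the cut condition ensures that each insertion is matched by a cancellation with a strict net gain, in exact analogy with the count carried out in \cite{Heusener} for cyclic words. Since $\varphi$ is an automorphism of $F_n$, the subgroup $\varphi(H)$ has the same rank as $H$, and therefore $|V(\core{\varphi(H)})|-|E(\core{\varphi(H)})|=|V(\core{H})|-|E(\core{H})|$; the strict decrease in edges then forces a strict decrease in vertices. This is why the theorem can assert reductions of both quantities simultaneously.

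The main obstacle will be producing the cut and establishing that it strictly reduces the edge count. The hypothesis that $\core{H}$ has more than one vertex is what supplies the slack: a single-vertex core is already the rose of a basis of $H$ and no Whitehead cut can reduce it further. Exhibiting the cut in the multi-vertex case requires a careful analysis of the folding sequence $\Gamma \to R_n$, translating the first identification it performs into a separation of labels inside $\mathrm{Wh}(H)$. This is the central technical step where the rank-one framework of \cite{Heusener} has to be adapted to a core graph with several vertices, and it is where I expect the bulk of the work to lie.
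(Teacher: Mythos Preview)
Your overall strategy matches the paper's: define a Whitehead graph for $\core{H}$ from the label sets $L(v)$, extend $\core{H}$ to a graph $\Gamma$ that folds to $R_n$ using a basis extension, extract a cut from the folding sequence, and then verify that the associated Whitehead move strictly shrinks the core. The Euler-characteristic remark at the end (rank is preserved, so vertex and edge counts drop together) is exactly how the paper handles the ``both quantities'' clause.

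The one place where your plan diverges from the paper, and where it would not work as written, is the choice of which fold to analyse. You propose to read the cut off the \emph{first} identification in the sequence $\Gamma\to R_n$. That does not suffice: the early folds can take place entirely among the attached arcs for the complementary basis elements and carry no information whatsoever about the turns of $\core{H}$. The paper instead analyses the \emph{penultimate} graph $G^{(l-1)}$, one rank-preserving fold away from $R_n$. This graph has exactly two vertices and a rigid, explicit shape (one label doubled, every other label appearing once); the label-preserving map $\core{H}\to G^{(l-1)}$ then partitions the vertices of $\core{H}$ into two camps according to their image. Since the Whitehead graph of $H$ is a union of cliques on the sets $L(v)$, each camp contributes edges only inside a fixed subset of labels, and the doubled label is the sole label meeting both camps: that label is the cut vertex. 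So the idea you are missing is not ``look at the folding sequence'' but ``look at its last step'', where the combinatorics are simple enough to read off the cut directly.

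A secondary remark: your formulation of the cut (edges from $A\setminus\{a\}$ to its complement land on $a$ or $\ol a$) is the classical Whitehead condition and is enough for Theorem~\ref{introWhitehead2}. The paper in fact extracts the sharper cut in which all such edges land on $a$ alone; that refinement is precisely what is needed for Theorem~\ref{introfine}, and it too comes for free from the two-vertex structure of $G^{(l-1)}$.
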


\begin{introthm}\label{introfine}
The automorphism in theorem \ref{introWhitehead2} can be chosen in such a way that $\core{\varphi(H)}$ can be obtained from $\core{H}$ by means of a quotient that collapses some of the edges to points whilst preserving the labels and orientations on the other edges.
\end{introthm}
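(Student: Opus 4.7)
My plan is to prove Theorem \ref{introfine} in parallel with Theorem \ref{introWhitehead2}, adapting the Stallings-folding proof of Theorem \ref{introfinesse} from the primitive-element case to the subgroup case. The starting point is the Whitehead graph $W(H)$ introduced in the body of the paper: under the hypotheses of Theorem \ref{introWhitehead2}, a subgroup version of the classical cut-vertex criterion should provide a letter $a\in\{x_1,\ol{x_1},\ldots,x_n,\ol{x_n}\}$ which is a cut vertex (or isolated component) of $W(H)$. This $a$ will serve as the multiplier of the Whitehead automorphism $\varphi$, and the side of the cut on which each half-edge at each vertex of $\core H$ lies dictates which of the four forms $x_j,\ ax_j,\ x_j\ol a,\ ax_j\ol a$ is assigned to $\varphi(x_j)$.

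Second, I would construct an auxiliary $F_n$-labelled graph $G$ by applying $\varphi$ to $\core H$ letter by letter: each edge labelled by some $x_j\neq a,\ol a$ gets subdivided and decorated with $a$- and/or $\ol a$-stubs at its endpoints according to the shape of $\varphi(x_j)$, while edges already labelled by $a$ or $\ol a$ are left untouched. The graph $G$ represents $\varphi(H)$, so $\core{\varphi(H)}$ is obtained from $G$ by folding and pruning hanging subtrees. The crucial local claim is that, at each vertex $v$ of $\core H$, the cut-vertex structure of $W(H)$ forces the inserted $a$-stubs to come in matched pairs with opposite orientations at $v$; these pairs fold together, so the entire local insertion around $v$ reduces either to a hanging subtree (which gets pruned, leaving the original edges at $v$ with their labels and orientations intact) or to an identification of $v$ with a neighbouring vertex $v'$ along an original edge $e$ of $\core H$, in which case $e$ collapses to a point.

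Third, I would globalize: the local outcomes assemble into a surjective graph map $\core H\to\core{\varphi(H)}$ which collapses a non-empty subset of edges and preserves labels and orientations on the rest -- exactly the fine property asserted in the statement. The quantitative inequality recorded in Theorem \ref{introWhitehead2} then follows, because the cut-vertex condition forces at least one collapse to occur and hence strictly decreases both the edge count and the vertex count.

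The main obstacle is the globalization step. The cut-vertex property gives local foldability at every vertex, but one must verify that when these local folds are performed consistently throughout $G$ they yield a clean edge-collapse quotient of $\core H$ -- as opposed to, for instance, identifying two distinct edges of $\core H$ sharing the same pair of endpoints, or collapsing only half of an edge. I expect this verification to rely on the reducedness of $\core H$ (no two edges with the same label leave the same vertex) together with the precise combinatorial shape of the cut in $W(H)$, ensuring that all non-trivial identifications produced by folding come from endpoint merges rather than edge-on-edge mergers.
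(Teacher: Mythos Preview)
Your overall strategy is exactly the paper's: find a cut vertex $a$ in the Whitehead graph $W(H)$, take $A$ to be a component of $W(H)\setminus\{a\}$ not containing $\ol a$, form $\varphi=(A,a)$, build the subdivided graph, fold, and read off a collapse map. Where your sketch goes slightly astray is in the local picture. The inserted $a$-stubs at a vertex $v$ do \emph{not} come in matched pairs with opposite orientations; rather, every new stub created at $v$ is an $a$-edge pointing \emph{out} of $v$, and they all fold together into a single stub (and, if $a\in L(v)$, with the pre-existing $a$-edge at $v$). The key observation you are missing, and which dissolves your globalization worry entirely, is that for each vertex $v$ the set $L(v)$ of half-edge labels spans a \emph{clique} in $W(H)$. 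Hence when you remove the cut vertex $a$, all of $L(v)\setminus\{a\}$ lands on the same side of the cut, forcing a clean trichotomy: either $L(v)\cap A=\emptyset$, or $L(v)\subseteq A$, or $a\in L(v)$ and $L(v)\subseteq A\cup\{a\}$.

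With this trichotomy in hand the local analysis is mechanical and the global step is automatic. In case (i) nothing happens at $v$; in case (ii) all stubs fold to a single new vertex $v_1$ and $v$ becomes valence $1$ and is pruned, so effectively $v$ is renamed $v_1$; in case (iii) all stubs fold onto the existing $a$-neighbour $u$ of $v$, again leaving $v$ with valence $1$, so the net effect is precisely the collapse of the edge $vu$. The paper records separately (its Lemma~\ref{onlya2}) that after performing just these obvious folds the graph is already folded, so no further identifications---in particular no edge-on-edge mergers of the sort you feared---can occur. That lemma, together with the clique/trichotomy observation, is what replaces the ad hoc globalization argument you anticipated.
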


This additional property turns out to have several interesting features, and in particular it behaves well with respect to subgroups (see lemmas \ref{finesubgroups} and \ref{fineimage} in the body of the paper). It also allows us to deduce a relative version of Whitehead's algorithm. Let $x_1,...,x_n$ be a fixed basis for $F_n$.

\begin{introthm}\label{introWhitehead4}
Let $w\in F_n$ be a primitive element that is not a single letter. Suppose there is an automorphism $\theta:F_n\rar F_n$ such that $\theta(\gen{x_1,...,x_k})=\gen{x_1,...,x_k}$ and $\theta(w)=x_{k+1}$. Then there is a Whitehead automorphism $\varphi$ such that:

(i) $\varphi(x_i)=x_i$ for $i=1,...,k$.

(ii) The length of $\varphi(w)$ is strictly smaller than the length of $w$.

(iii) Every letter, which is added to $w$ when applying $\varphi$ to $w$ letter by letter, immediately cancels (in the free reduction process).
\end{introthm}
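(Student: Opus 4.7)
Set $H=\langle x_1,\dots,x_k,w\rangle = \theta^{-1}(\langle x_1,\dots,x_{k+1}\rangle)$; this is a free factor of $F_n$ of rank $k+1$, and since $\theta(\langle x_1,\dots,x_k\rangle)=\langle x_1,\dots,x_k\rangle$, the subgroup $\langle x_1,\dots,x_k\rangle$ is itself a free factor of $H$. Consequently the rose $R_k$ with petals labeled $x_1,\dots,x_k$ embeds in $\core{H}$ at the basepoint as a labeled subgraph. I would argue by induction on the pair $(n,|w|)$ ordered lexicographically, with the base case $k=0$ essentially provided by Theorem \ref{introfinesse}.

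For the inductive step I split into two cases. If $\core{H}$ has more than one vertex, I invoke Theorem \ref{introfine} on the free factor $H$, obtaining a Whitehead automorphism $\varphi$ such that $\core{\varphi(H)}$ arises from $\core{H}$ by a nontrivial quotient collapsing edges and preserving labels on the rest. By lemma \ref{finesubgroups}, the induced map restricts on $R_k\subset\core{H}$ to a fine collapse yielding $\core{\varphi(\langle x_1,\dots,x_k\rangle)}$; since $R_k$ is already minimal no petal collapses, so $\varphi(\langle x_1,\dots,x_k\rangle)=\langle x_1,\dots,x_k\rangle$. I would then use the freedom in the Whitehead defining data $(a,S)$ of $\varphi$ (excluding $x_1,\dots,x_k$ from the multiplier set $S$) to promote this setwise equality to $\varphi(x_i)=x_i$ for every $i\le k$, giving (i).

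If instead $\core{H}$ is a single vertex, then $H$ has a basis of single letters containing $x_1,\dots,x_k$, so $H=\langle x_1,\dots,x_k,y\rangle$ for some letter $y$. The element $w$ sits inside the strictly smaller free group $H\cong F_{k+1}$ as a primitive element (witnessed by $\theta|_H$) that is not a single letter; the inductive hypothesis applied inside $H$ yields a Whitehead automorphism $\psi$ of $H$ with properties (i)--(iii), which I extend by the identity on $\{x_1,\dots,x_n\}\setminus\{x_1,\dots,x_k,y\}$ to produce the required $\varphi$ of $F_n$. The corner case $n=k+1$ requires separate analysis, where the only permissible Whitehead moves alter $x_n=y$ alone, and one constructs $\varphi$ directly by choosing the multiplier $a$ so as to cancel the letters of $w$ adjacent to occurrences of $x_n^{\pm1}$.

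Properties (ii) and (iii) in the main case follow by reading $w$ as a closed loop in $\core{H}$: applying $\varphi$ letter by letter inserts at each step possibly some $a^{\pm1}$, according to $\varphi(x_j)\in\{x_j,ax_j,x_j\ol{a},ax_j\ol{a}\}$, and by the fine property each inserted letter corresponds to an edge collapsed by the quotient, hence cancels immediately in the free reduction of $\varphi(w)$; the nontriviality of the quotient forces at least one letter of $w$ to genuinely vanish. The main obstacle will be the promotion, in the first case, from $\varphi(\langle x_1,\dots,x_k\rangle)=\langle x_1,\dots,x_k\rangle$ setwise to $\varphi(x_i)=x_i$ individually, together with a careful translation of the graph-level fine-collapse property into the word-level letter cancellation when applied to the specific loop reading $w$.
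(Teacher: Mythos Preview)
Your overall strategy—forming $H=\gen{x_1,\dots,x_k,w}$ and applying the subgroup version of the Whitehead theorem with the fine property (Theorems \ref{Whitehead2} and \ref{finesse2})—is exactly the paper's approach. The gap is in your ``promotion'' step.

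Your claim that ``since $R_k$ is already minimal no petal collapses, so $\varphi(\gen{x_1,\dots,x_k})=\gen{x_1,\dots,x_k}$'' is not correct. Fineness on $\gen{x_1,\dots,x_k}$ (via Lemma \ref{finesubgroups}) means the trichotomy holds at the unique vertex of $R_k$; case (iii) is indeed impossible there, but case (ii) can occur: all of $x_1,\ol{x_1},\dots,x_k,\ol{x_k}$ lie in $A$, and then $\varphi(x_i)=ax_i\ol{a}$ for every $i$. What ``no petal collapses'' gives you is equality of core graphs, i.e.\ conjugacy, and here $\varphi(\gen{x_1,\dots,x_k})=a\gen{x_1,\dots,x_k}\ol{a}$ is a genuine conjugate (since $a\notin\{x_1,\dots,x_k,\ol{x_1},\dots,\ol{x_k}\}$), not the subgroup itself. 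So there is no setwise equality to promote.

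The paper supplies the missing mechanism, and it is a specific trick rather than a vague ``freedom in the defining data'': the identity $(A,a)=\gamma_a\circ(A^c\setminus\{a,\ol a\},\ol a)$ of Lemma \ref{lemmino}. If the basepoint of $\core{H}$ falls into case (ii) or (iii) for $(A,a)$, one replaces $(A,a)$ by the complementary Whitehead move $(A^c\setminus\{a,\ol a\},\ol a)$; a direct check shows the basepoint then falls into case (i), so none of $x_1,\ol{x_1},\dots,x_k,\ol{x_k}$ lie in the new acting set and $\varphi(x_i)=x_i$ holds on the nose. With this swap available, the paper's proof is a single direct application of Theorems \ref{Whitehead2} and \ref{finesse2}: your induction on $(n,|w|)$, the separate single-vertex case, and the hand-waved corner case $n=k+1$ all become unnecessary.
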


\

We then use the techniques developed in order to investigate the structure of the free factor complex $\FF{n}$. The free factor complex of a free group is the analogous to the curve complex of a surface. A famous rigidity theorem of Ivanov states that the isometries of the curve complex are essentially the mapping class group of the surface; in the same way, there is a rigidity theorem due to Bestvina and Bridson stating that the isometries of $\FF{n}$ are essentially the outer automorphisms of $F_n$, see \cite{Bridson}. Just as the curve complex turned out to be hyperbolic, $\FF{n}$ is hyperbolic too: in \cite{Bestvina}, Bestvina and Feighn study the large-scale geometry of $\FF{n}$ and give a detailed description of lines which are geodesics up to a reparametrization and up to distance $C$. Unfortunately, the constant $C$ is quite large, so those techniques do not give much information about the local geometry of $\FF{n}$. In the present paper, we investigate the existence of an algorithm for the computation of the exact distance in $\FF{n}$. There are algorithms to compute the distance between any two points in the curve complex of a surface, but the same question for $\FF{n}$ remains open. We are able to determine whether two vertices are at distance $d$ for $d=1,2,3$; we are also able to do that for $d=4$ in the particular case when one of the two vertices represents a conjugacy class of free factors of rank $n-1$. In order to do this, we make use of the tools developed earlier in this paper, together with an idea which appeared in \cite{Goldstein}.

\section*{Acknowledgements}

I would like to thank my supervisor Prof. Martin R. Bridson for having given me the interesting theorem \ref{introprimitiveinfreefactors} to think about, and for all the suggestions he gave me when I was stuck while trying to solve it. I also thank him for all the help he gave me while I was thinking about the free factor complex.

\section{Preliminaries and notations}\label{Preliminaries}

We work inside a finitely generated free group $F_n$ of rank $n$, generated by $x_1,...,x_n$. We write $\ol{x_i}=x_i^{-1}$. We denote with $R_n$ the standard $n$-rose, i.e. the graph with one vertex $*$ and $n$ oriented edges labeled $x_1,...,x_n$. The fundamental group $\pi_1(R_n,*)$ will be identified with $F_n$: the path going along the edge labeled $x_i$ (with the right orientation) corresponds to the element $x_i\in F_n$.

\begin{mydef}\label{grafo}
An \textbf{$\grafo$} is a graph $G$ together with a map $f:G\rar R_n$ sending each vertex of $G$ to the unique vertex of $R_n$, and each open edge of $G$ homeomorphically to one edge of $R_n$.
\end{mydef}

This means that every edge of $G$ is equipped with a label in $\{x_1,...,x_n\}$ and an orientation, according to which edge of $R_n$ it is mapped to; the map $f:G\rar R_n$ is called \textbf{labeling map} for $G$.

\begin{mydef}
Let $G_0$ and $G_1$ be $\grafos$. A map $h:G_0\rar G_1$ is called \textbf{label-preserving} if it sends each vertex to a vertex and each edge to an edge with the same label and orientation.
\end{mydef}

We notice that, if $h:G_0\rar G_1$ is label-preserving and $f_0:G_0\rar R_n$ and $f_1:G_1\rar R_n$ are the labeling maps, then we have $f_1\circ h=f_0$.

\subsection*{Core graph of a subgroup}

\begin{mydef}
Let $G$ be a graph which is not a tree. Define its \textbf{core graph} $\core{G}$ as the subgraph given by the union of its non-degenerate loops, i.e. all the images $f(S^1)$ for $f:S^1\rar G$ a continuous locally injective map from the circle.
\end{mydef}

Notice that $\core{G}$ is connected, and every vertex has valence at least $2$. We say that a graph $G$ is \textbf{core} if $\core{G}=G$.

In the following we will often consider graphs with a basepoint. We always mean that the basepoint is a vertex of the graph.

\begin{mydef}
Let $(G,*)$ be a pointed graph which is not a tree. Define its \textbf{pointed core graph} $\bcore{G}$ as the subgraph given by the union of all the images $f([0,1])$ for $f:[0,1]\rar G$ a continuous locally injective map with $f(0)=f(1)=*$.
\end{mydef}

We will oftet abbreviate $(G,*)$ to $G$. For a pointed graph $G$, there is a unique shortest path $\sigma$ (either trivial or embedded) connecting the basepoint to $\core{G}$; the graph $\bcore{G}$ consists exactly of the union $\core{G}\cup\im{\sigma}$.

Given a nontrivial subgroup $H\sgr F_n$, we can build the corresponding pointed covering space $(\cov{H},*)\rar(R_n,*)$. Define the core graph $\core{H}$ and the pointed core graph $\bcore{H}$ to be the core and the pointed core of $(\cov{H},*)$, respectively. Notice that the labeling map $f:\bcore{H}\rar R_n$ induces an injective map $\pi_1(f):\pi_1(\bcore{H},*)\rar F_n$, and the image of such map is exactly the subgroup $H$. We observe that conjugate subgroups have the same core graph, but distinct pointed core graphs. We also observe that $H$ is finitely generated if and only if $\core{H}$ is finite (and if and only if $\bcore{H}$ is finite).

\subsection*{Stallings' folding}

We will assume that the reader has some confidence with the classical Stallings' folding operation, for which I refer to \cite{Stallings}. I briefly recall the main properties that we are going to use.

Let $G$ be a finite connected $\grafo$ and suppose there are two distinct edges $e_1,e_2$ with endpoints $v,v_1$ and $v,v_2$ respectively. Suppose that $e_1$ and $e_2$ have the same label and orientation. We can identify $v_1$ with $v_2$, and $e_1$ with $e_2$: we then get a quotient map of graphs $q:G\rar G'$.

\begin{mydef}
The quotient map $q:G\rar G'$ is called \textbf{Stallings' folding}.
\end{mydef}

We notice that $q$ is label preserving. Fix a basepoint $*\in G$, which induces a basepoint $*\in G'$: then the map $\pi_1(f):\pi_1(G)\rar\pi_1(R_n)$ and the map $\pi_1(f'):\pi_1(G')\rar\pi_1(R_n)$ give the same subgroup $\pi_1(f)(\pi_1(G))=\pi_1(f')(\pi_1(G'))\sgr\pi_1(R_n)=F_n$. The map $\pi_1(q):\pi_1(G)\rar\pi_1(G')$ is surjective; however, it is not injective in general.

\begin{mydef}
A Stallings' folding $q:G\rar G'$ is called \textbf{rank-preserving} if the induced map $\pi_1(q)$ is an isomorphism.
\end{mydef}

Being rank-preserving is equivalent to the requirement $v_1\not=v_2$ (i.e. that we are identifying two distinct vertices) (see also figure \ref{rankpreserving}). This does not depend on the choice of the basepoint.

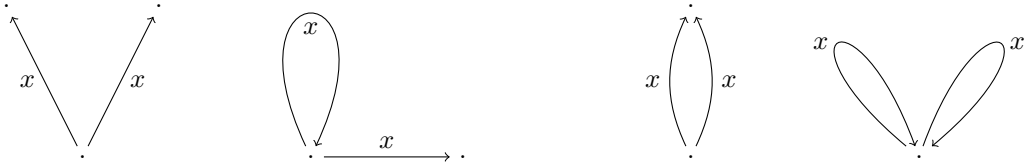
\begin{figure}[h!]
\centering
\begin{tikzpicture}
\node (1) at (2,1) {.};
\node (2) at (1,3) {.};
\node (3) at (3,3) {.};
\draw[->] (1) to node[left]{$x$} (2);
\draw[->] (1) to node[right]{$x$} (3);

\node (4) at (5,1) {.};
\node (5) at (7,1) {.};
\draw[->] (4) to[out=115,in=65,looseness=50] node[below]{$x$} (4);
\draw[->] (4) to node[above]{$x$} (5);

\node (6) at (10,1) {.};
\node (7) at (10,3) {.};
\draw[->] (6) to[out=115,in=-115,looseness=1] node[left]{$x$} (7);
\draw[->] (6) to[out=65,in=-65,looseness=1] node[right]{$x$} (7);

\node (8) at (13,1) {.};
\draw[->] (8) to[out=140,in=110,looseness=50] node[left]{$x$} (8);
\draw[->] (8) to[out=70,in=40,looseness=50] node[right]{$x$} (8);
\end{tikzpicture}
\caption{Examples of configurations where a folding operation is possible. The two examples on the left produce rank-preserving folding operations; the two examples on the right produce non-rank-preserving folding operations.}
\label{rankpreserving}
\end{figure}

Given a finite $\grafo$ $G$, we can successively apply folding operations to $G$ in order to get a sequence $G=G^{(0)}\rar G^{(1)}\rar...\rar G^{(l)}$. Notice that the number of edges decreases by $1$ at each step, and thus the length of any such chain is bounded (by the number of the edges of $G$).

\begin{myprop}\label{folding}
Let $G$ be a finite connected $\grafo$ and let $G=G^{(0)}\rar G^{(1)}\rar...\rar G^{(l)}$ be a maximal sequence of folding operations. Also, fix a basepoint $*\in G$, inducing a basepoint $*\in G^{(i)}$. Then we have the following:

(i) Each such sequence has the same length $l$ and the same final graph $G^{(l)}$.

(ii) For each such sequence, and for each label, the sequence has the same number of folding operations involving edges with that label.

(iii) Each such sequence has the same number of rank-preserving folding operations.

(iv) Let $f^{(i)}:G^{(i)}\rar R_n$ be the labeling map. Then the image of $\pi_1(f^{(i)}):\pi_1(G^{(i)})\rar\pi_1(R_n)$ is the same subgroup $H\sgr F_n$ for every $i$.

(v) For every $i$, there is a unique label-preserving map of pointed graphs $h^{(i)}:G^{(i)}\rar\cov{H}$. The image $\im{h^{(i)}}$ is the same subgraph of $\cov{H}$ for every $i$.

(vi) The map $h^{(l)}$ is an embedding of $G^{(l)}$ as a subgraph of $\cov{H}$. Moreover, $G^{(l)}$ contains $\bcore{H}$.
\end{myprop}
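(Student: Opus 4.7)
My plan is to prove the six claims in the order (iv), (v), (vi), (i), (ii), (iii), since each step supplies input to the next. For (iv), applied to a single folding $q:G\rar G'$, one inclusion $f'_*(\pi_1(G')) \supseteq f_*(\pi_1(G))$ is immediate from the factorization $f = f' \circ q$. For the reverse inclusion, given any loop $\gamma'$ in $G'$ based at $*$, I would construct a loop in $G$ with the same label in $F_n$ by lifting $\gamma'$ edge-by-edge, making consistent choices among the preimages of the identified edge/vertex, and inserting cancelling subpaths of the form $\bar{e_1}e_2$ (reading $\bar x x$, trivial in $F_n$) whenever the current position in $G$ does not match the vertex required to continue. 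Iterating over the folding sequence gives the full statement of (iv).

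For (v), since (iv) gives $f^{(i)}_*(\pi_1(G^{(i)})) = H = \pi_1(\cov{H})$, standard covering-space theory provides a unique pointed lift $h^{(i)}: G^{(i)} \rar \cov{H}$ of $f^{(i)}$, and this lift is label-preserving because $\cov{H} \rar R_n$ respects edge labels. Uniqueness of pointed lifts forces $h^{(i+1)} \circ q^{(i)} = h^{(i)}$, which together with the surjectivity of each $q^{(i)}$ gives $\im{h^{(i)}} = \im{h^{(0)}}$, a subgraph of $\cov{H}$ determined entirely by the initial data $(G,*)$.

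The core of the argument is the injectivity of $h^{(l)}$ in (vi). Maximality of the folding sequence forces $G^{(l)}$ to be folded, so $f^{(l)}$ is a graph immersion; passing to universal covers, $\tilde f^{(l)}:\tilde G^{(l)} \rar \tilde R_n$ is a locally injective map between trees and is therefore injective. The induced map $\pi_1(G^{(l)}) \rar F_n$ is injective as well, since any nontrivial element of its kernel would act trivially on $\tilde R_n$ but freely on $\tilde G^{(l)}$, contradicting equivariance and injectivity of $\tilde f^{(l)}$. Quotienting by the $\pi_1(G^{(l)})$-action yields injectivity of $h^{(l)}:G^{(l)} \hookrightarrow \tilde R_n/H = \cov{H}$. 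For the inclusion $\bcore{H} \subseteq \im{h^{(l)}}$, any reduced loop $\gamma$ at $*$ in $\cov{H}$ represents an element of $H$ and hence is the reduced form of some loop in $\im{h^{(l)}}$ based at $*$; uniqueness of reduced paths in $\cov{H}$ (which covers $R_n$) then forces $\gamma$ itself to lie in $\im{h^{(l)}}$.

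Finally (i), (ii), (iii) follow by counting. Since $G^{(l)} \cong \im{h^{(l)}} = \im{h^{(0)}}$ is determined by $(G,*)$, every maximal sequence terminates at the same graph, and its length $l$ equals the edge-count difference between $G$ and $G^{(l)}$; restricting this count to edges of a given label yields (ii). For (iii), every non-rank-preserving folding drops $\rank{\pi_1(G^{(i)})}$ by one while rank-preserving foldings leave it unchanged, so the number of non-rank-preserving foldings equals $\rank{\pi_1(G)} - \rank{\pi_1(G^{(l)})}$, and the number of rank-preserving foldings is then $l$ minus this. The main obstacle throughout is the injectivity of $h^{(l)}$: this is precisely where the maximality of the folding sequence enters in an essential way, together with the tree structure of the universal cover.
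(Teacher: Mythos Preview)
The paper does not actually prove this proposition: it is listed among the preliminaries on Stallings' foldings, with the author explicitly saying he will ``briefly recall the main properties'' and referring to \cite{Stallings} for details. So there is no proof in the paper to compare against.

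That said, your argument is correct and follows the standard route. The ordering you chose---first the invariance of the image subgroup (iv), then the canonical lift to $\cov{H}$ via the lifting criterion (v), then injectivity of $h^{(l)}$ from foldedness using the universal-cover/tree argument (vi), and finally the counting arguments for (i)--(iii)---is exactly how one normally organises this material. Two small comments. In (iv), your edge-by-edge lifting with insertions of $\ol{e_1}e_2$ is precisely a proof that $\pi_1(q)$ is surjective, a fact the paper already records just above the proposition; you could simply cite that. In (vi), once you have that $h^{(l)}$ is a label-preserving embedding and that $\pi_1(G^{(l)})\rar H$ is an isomorphism, the inclusion $\bcore{H}\subseteq\im{h^{(l)}}$ follows because $h^{(l)}$ sends reduced loops to reduced loops and reduced representatives in the folded graph $\cov{H}$ are unique; your sketch says this, just somewhat tersely. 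The key point you correctly identify---that maximality (foldedness) is used exactly once, to make $h^{(l)}$ injective, after which $G^{(l)}\cong\im{h^{(0)}}$ depends only on $(G,*)$---is the heart of the matter.
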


An $\grafo$ is called \textbf{folded} if no folding operation is possible on $G$.

\begin{mydef}
Let $G$ be a finite connected $\grafo$. Define its \textbf{folded graph} $\fold{G}$ to be the $\grafo$ $G^{(l)}$ obtained from any maximal sequence of folding operations as in proposition \ref{folding}.
\end{mydef}

A combinatorial cycle inside $G$ is called \textbf{reduced} if it never crosses consecutively two edges with the same label but opposite orientation.

\begin{mylemma}\label{novalence1}
Let $G$ be a finite connected $\grafo$ and let $G\rar G'\rar...\rar G^{(l)}$ be any maximal sequence of folding operations as in proposition \ref{folding}. Suppose that every vertex of $G$ is contained in some reduced cycle. Then there is no valence-$1$ vertex in any graph of the sequence.
\end{mylemma}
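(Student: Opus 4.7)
The plan is to prove, by induction on $i$, that every vertex of $G^{(i)}$ lies on some reduced cycle. Once this is established, the lemma follows immediately: a valence-$1$ vertex $u$ has a unique incident edge $e$, which cannot be a loop (a loop contributes $2$ to the valence); hence any cycle through $u$ must enter and leave via $e$, producing a consecutive pair of traversals of the same label in opposite orientations, which is forbidden in a reduced cycle.

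The base case $i=0$ is precisely the hypothesis of the lemma. For the inductive step, let $q\colon G^{(i)}\to G^{(i+1)}$ be the $(i+1)$-th folding, identifying two edges $e_1,e_2$ that share an initial vertex $v$ and a common label (and possibly identifying their terminal vertices $v_1,v_2$). Given any vertex $u\in G^{(i+1)}$, I fix a preimage $\tilde u$ under $q$ and, by the inductive hypothesis, pick a reduced cycle $\gamma$ through $\tilde u$. The claim is that $q(\gamma)$ is still a reduced cycle in $G^{(i+1)}$, and it manifestly passes through $u=q(\tilde u)$.

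To verify the claim, the crucial point is that $q$ is label-preserving and orientation-preserving, by the very definition of a folding. Consequently, the sequence of labeled and oriented edge traversals executed by $q(\gamma)$ coincides letter by letter with that executed by $\gamma$. In particular, two consecutive traversals in $q(\gamma)$ cross edges with the same label in opposite orientations if and only if the corresponding two consecutive traversals in $\gamma$ do so, and the latter is excluded since $\gamma$ is reduced. Hence $q(\gamma)$ is reduced, completing the induction.

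The apparent obstacle is to become worried about the local geometry of the fold: one might expect new backtracks to appear when $e_1$ and $e_2$ are glued together (particularly in the non-rank-preserving case, or where the cycle $\gamma$ traverses $e_1$ and then $\bar{e_2}$), and then have to analyse case by case how such backtracks could displace the distinguished vertex $u$ upon reduction. The observation that dispels this worry is that the definition of ``reduced cycle'' is phrased purely in terms of the labeled sequence of edge traversals, which is an invariant of label-preserving maps; thus no geometric case analysis is required and the argument reduces to this one transparent remark.
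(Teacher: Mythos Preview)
The paper states this lemma without proof, treating it as a standard preliminary fact about Stallings foldings. Your argument is correct and is the natural one: the key observation that ``reduced'' is a condition on the label sequence alone, and hence is preserved under any label-preserving map (in particular under a fold), makes the induction immediate and avoids any case analysis of the fold's local geometry.
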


Suppose we are given a finite set of reduced words $w_1,...,w_k\in F_n$ of lengths $l_1,...,l_k$, and let $H=\gen{w_1,...,w_k}$. We can construct the graph $G$ given by a basepoint $*$ and pairwise disjoint loops $\gamma_1,...,\gamma_k$ starting and ending at the basepoint. The loop $\gamma_i$ is subdivided into $l_i$ edges, labeled and oriented according to the letters of the word $w_i$, in such a way that, when going along $\gamma_i$, we read exactly the word $w_i$. We have the following proposition:

\begin{mylemma}\label{constructioncore}
We have that $\bcore{H}=\fold{G}$. Moreover, along the chain of folding operations, we never have a valence-$1$ vertex, except possibly for the basepoint.
\end{mylemma}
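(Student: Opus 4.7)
The plan is to establish the two assertions separately. For the identification $\fold{G}=\bcore{H}$: traversing $\gamma_i$ in $G$ gives precisely the word $w_i$, so the subgroup associated to $G$ under $\pi_1$ is exactly $H=\gen{w_1,\dots,w_k}$. Proposition \ref{folding}(iv) then tells us that the same subgroup $H$ is associated to $\fold{G}$, while parts (v) and (vi) give that $\fold{G}$ embeds as a subgraph of $\cov{H}$ that contains $\bcore{H}$. For the reverse inclusion, I would observe that every edge of $\fold{G}$ lies in the image of some $\gamma_i$ under the quotient $q:G\to\fold{G}$. Viewed inside $\cov{H}$, the image $q(\gamma_i)$ is a closed walk at $*$ reading the reduced word $w_i$; since $\cov{H}$ is folded, this walk is automatically reduced, and being a reduced loop at $*$ it lies entirely in $\bcore{H}$.

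For the statement about valences, I would prove directly that, for every $i$, each non-basepoint vertex $v$ of $G^{(i)}$ has valence at least $2$. Writing $q^{(i)}:G\to G^{(i)}$ for the composition of folding maps, and noting that folding always sends basepoint to basepoint, every preimage $v'\in (q^{(i)})^{-1}(v)$ must be a non-basepoint vertex of $G$, hence an interior vertex of some loop $\gamma_j$. Its two adjacent edges in $\gamma_j$ correspond to two consecutive letters $a_s,a_{s+1}$ of $w_j$ and define two edge-ends at $v'$ whose outgoing labels at $v'$ are $a_s^{-1}$ and $a_{s+1}$; these outgoing labels are distinct because $w_j$ is reduced. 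Since $q^{(i)}$ is label-preserving, the images of these two edge-ends at $v$ have the same two distinct outgoing labels, and are therefore two distinct edge-ends at $v$, giving valence at least $2$.

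The delicate point is ensuring that the two edge-ends at $v'$ really do remain distinct in $G^{(i)}$: a priori, the quotient could identify the two adjacent edges of $\gamma_j$ and fold them into a loop at $v$. The observation which resolves this is that each edge-end carries a well-defined outgoing label, which is preserved under any label-preserving quotient; two edge-ends with different outgoing labels at the same vertex are necessarily distinct, even if they happen to belong to the same underlying edge (as the two sides of a loop at $v$). As a by-product one obtains the slightly stronger statement that every non-basepoint valence-$2$ vertex of $G^{(i)}$ has two distinct outgoing labels, which is exactly the configuration that prevents a further folding operation from lowering its valence.
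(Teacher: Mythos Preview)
The paper states Lemma~\ref{constructioncore} as a preliminary fact and does not supply a proof, so there is nothing to compare against; your argument is correct and is the natural way to prove the statement. The first part is handled exactly as one would expect from Proposition~\ref{folding}, and your observation that each $q(\gamma_i)$ reads the reduced word $w_i$ in the folded graph $\cov{H}$, hence is a locally injective loop at $*$ and so lies in $\bcore{H}$, is the right way to get the reverse inclusion. For the valence statement, your direct argument via outgoing labels is clean and slightly sharper than invoking Lemma~\ref{novalence1} (which would require every vertex, including the basepoint, to lie on a reduced cycle---not guaranteed here since the $w_i$ need not be cyclically reduced). Your care with edge-ends versus edges, noting that two edge-ends at $v$ with distinct outgoing labels are distinct even when they are the two sides of a single loop, correctly handles the only subtle point.
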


In particular, $w_1,...,w_n$ are a basis for $F_n$ if and only if, with the above construction, the resulting $\grafo$ $\fold{G}$ is the standard $n$-rose $R_n$. In that case, each folding operation has to be rank-preserving, since the fundamental group of $G$ has the same rank as the fundamental group of $\fold{G}=R_n$.

\subsection*{Primitive elements and free factors}

We will be interested in the study of primitive elements and free factors:

\begin{mydef}
An element $w\in F$ is called \textbf{primitive} if it is part of some basis for the group.
\end{mydef}

\begin{mydef}
A subgroup $H\sgr F$ is called a \textbf{free factor} if some basis(equivalent: every basis) for $H$ can be extended to a basis for $F$.
\end{mydef}

Notice that an element $w\in F$ is primitive if and only if the cyclic subgroup $\gen{w}$ is a free factor; in this sense, the notion of free factor is a natural generalization of the notion of primitive element.

The following lemma is immediate:

\begin{mylemma}
Let $G$ be a pointed graph and let $G'$ be a connected subgraph containing the basepoint. Then the map $\pi_1(G')\rar\pi_1(G)$ induced by the inclusion is injective, and $\pi_1(G')$ is a free factor in $\pi_1(G)$.
\end{mylemma}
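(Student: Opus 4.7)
The plan is to reduce the statement to a comparison of bases obtained from spanning trees. First, pick a spanning tree $T'$ of $G'$ containing the basepoint $*$ (using Zorn's lemma if $G'$ is infinite), and then extend $T'$ to a spanning tree $T$ of $G$. By the standard theory of fundamental groups of graphs, upon choosing an orientation for each non-tree edge, $\pi_1(G,*)$ is free on a basis indexed by $E(G)\setminus E(T)$, and $\pi_1(G',*)$ is free on a basis indexed by $E(G')\setminus E(T')$, where the basis element attached to an edge $e$ is obtained by going from $*$ through the spanning tree to one endpoint of $e$, crossing $e$, and returning to $*$ through the tree.

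The key intermediate observation is that $T\cap G'=T'$. By construction $T'\sgr T\cap G'\sgr G'$, and $T\cap G'$ is a subgraph of the tree $T$, hence a forest. If $T\cap G'$ contained an edge $e$ outside $T'$, then since $T'$ is spanning in $G'$, the graph $T'\cup\{e\}$ would contain a reduced cycle inside $G'$; but this cycle would also lie in $T$, contradicting the fact that $T$ is a tree. Therefore $T\cap G'=T'$, and in particular $E(G')\setminus E(T')\subseteq E(G)\setminus E(T)$.

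It remains to match up the two bases under the inclusion. For each edge $e\in E(G')\setminus E(T')$, the corresponding generator of $\pi_1(G')$ is a loop that goes through $T'$, crosses $e$, and returns through $T'$. Because $T\cap G'=T'$, this loop is exactly the generator of $\pi_1(G)$ associated to the same edge $e\in E(G)\setminus E(T)$. Hence the inclusion $\pi_1(G')\hookrightarrow\pi_1(G)$ sends the chosen basis of $\pi_1(G')$ injectively onto a subset of the chosen basis of $\pi_1(G)$; this immediately yields both injectivity of the induced map and the fact that $\pi_1(G')$ is a free factor of $\pi_1(G)$.

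There is no genuinely hard step here: the only point one must be careful about is the claim $T\cap G'=T'$, which is precisely what ensures that tree-paths in $G'$ are not shortened when viewed in $G$, so that the same loops serve as basis elements on both sides.
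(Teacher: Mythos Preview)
Your argument is correct and is exactly the standard spanning-tree proof one would expect. The paper itself does not supply a proof: it simply states the lemma and declares it ``immediate'', so there is nothing to compare against beyond noting that your write-up spells out the routine details the paper omits.
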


The following proposition turns out to be very useful in several situations.

\begin{myprop}
Let $K\sgr F_n$ be a finitely generated subgroup, and let $H\sgr F_n$ be a free factor. Then $H\cap K$ is a free factor in $K$.
\end{myprop}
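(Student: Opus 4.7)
The plan is to realize $H\cap K$ as the fundamental group of an explicit connected subgraph of $\bcore{K}$ containing the basepoint, and then invoke the preceding lemma (that $\pi_1$ of a connected basepoint-containing subgraph is automatically a free factor) to conclude.

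First I would reduce to a convenient choice of basis. Since $H\sgr F_n$ is a free factor, by definition there exists a basis $y_1,\dots,y_n$ of $F_n$ such that $y_1,\dots,y_k$ is a basis of $H$, where $k=\rank{H}$. Being a free factor of $K$ is an intrinsic property of the pair $(H\cap K,K)$ and does not depend on the presentation of $F_n$, so I can equivalently work with the rose $R_n$ built from the basis $y_1,\dots,y_n$. In this way I may assume without loss of generality that $H=\gen{x_1,\dots,x_k}$ for the fixed basis used to define $R_n$.

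Next, I would construct $\bcore{K}$, which by the results reviewed in the preliminaries is a finite connected pointed $F_n$-labeled graph with $\pi_1(\bcore{K},*)=K$. Let $\Gamma$ be the connected component of the basepoint in the subgraph of $\bcore{K}$ obtained by removing every edge whose label lies outside $\{x_1,\dots,x_k\}$. Then $\Gamma$ is a connected subgraph of $\bcore{K}$ containing $*$, so the lemma stated just above the proposition gives immediately that $\pi_1(\Gamma,*)$ is a free factor of $\pi_1(\bcore{K},*)=K$.

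It remains to identify the subgroup $\pi_1(\Gamma,*)\sgr F_n$ with $H\cap K$. The key step, and the only one that needs a short argument, is the following two inclusions. For ``$\subseteq$'', every reduced loop at $*$ in $\Gamma$ spells a reduced word in $\{x_1^{\pm 1},\dots,x_k^{\pm 1}\}$, hence represents an element of $H$; it also lies in $\bcore{K}$, so it represents an element of $K$. For ``$\supseteq$'', take $w\in H\cap K$: since $w\in K$ it lifts to a unique reduced loop at $*$ in $\bcore{K}$ (using that $\bcore{K}$ is folded), and since $w\in H$ the reduced form of $w$ only contains letters $x_1,\dots,x_k$, so this lifted loop traverses only edges with those labels; starting at the basepoint, the loop is therefore entirely contained in the connected component $\Gamma$.

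The argument is essentially free once one spots the right subgraph; the only potential obstacle is the basis change at the outset, which is handled directly by the definition of free factor, and the ``folded-ness'' of $\bcore{K}$, which ensures that the two inclusions above really match up at the level of subgroups rather than merely conjugacy classes.
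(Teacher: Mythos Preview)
Your proof is correct and follows essentially the same approach as the paper: reduce to $H=\gen{x_1,\dots,x_k}$, take the subgraph of $\bcore{K}$ spanned by edges with those labels and containing the basepoint, identify its $\pi_1$ with $H\cap K$, and invoke the subgraph lemma. Your $\Gamma$ (the connected component of $*$ after deleting the other edges) coincides with the paper's $G'$ (the union of all relevant closed paths at $*$), and you are slightly more explicit about the basis change and the role of foldedness in the identification $\pi_1(\Gamma,*)=H\cap K$.
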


\begin{proof}
Suppose $H$ has rank $r$: without loss of generality, we can assume that $H=\gen{x_1,...,x_r}\sgr F_n$.

Let $G=\bcore{K}$ be the basepointed $\grafo$ which represents $K$. A word $w$ belongs to $H\cap K$ if and only if it can be represented by a path inside $G$ which starts and ends at the basepoint, and which only crosses edges labeled with $x_1,...,x_r$. Consider the subgraph $G'\subseteq G$ which is given by the union of the basepoint and of all such paths. Then $\pi_1(G')$ is exactly $H\cap K$.

But since $G'$ is a subgraph of $G$, we have that $\pi_1(G')$ is a free factor in $\pi_1(G)$, meaning that $H\cap K$ is a free factor in $K$, as desired.
\end{proof}

We can easily obtain several corollaries from the above proposition.

\begin{mycor}\label{primitiveomitambient}
For $k<n$, consider the standard inclusion $F_k=\gen{x_1,...,x_k}\sgr\gen{x_1,...,x_n}=F_n$. Let $w\in F_k$. Then $w$ is primitive in $F_k$ if and only if $w$ is primitive in $F_n$.
\end{mycor}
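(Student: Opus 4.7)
The plan is to translate ``primitive'' into the equivalent statement ``generates a cyclic free factor'' and then use the proposition just proved for the nontrivial direction, together with a transitivity argument for the easy direction.

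For the forward implication, suppose $w$ is primitive in $F_k$. I would take a basis of $F_k$ containing $w$ and simply append $x_{k+1},\dots,x_n$; since $F_n$ decomposes as the free product $F_k*\gen{x_{k+1},\dots,x_n}$, the resulting set is a basis of $F_n$. So $w$ is primitive in $F_n$, and no serious work is required.

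For the backward implication, suppose $w$ is primitive in $F_n$, so that $\gen{w}$ is a free factor of $F_n$. I would apply the preceding proposition with $H:=\gen{w}$ (the free factor in $F_n$) and $K:=F_k$ (a finitely generated subgroup of $F_n$). Since $w\in F_k$, the intersection $\gen{w}\cap F_k$ equals $\gen{w}$ itself; the proposition therefore says that $\gen{w}$ is a free factor of $F_k$, which is exactly the statement that $w$ is primitive in $F_k$.

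Since each implication reduces immediately to something already established, there is essentially no obstacle in this corollary: the only thing to verify is that the hypothesis of the proposition holds, namely that $F_k$ is finitely generated (which is obvious, as $k<n$ is finite) and that $\gen{w}$ is a free factor of $F_n$ (which is the hypothesis of the backward direction). Everything else is a tautology once primitivity has been rephrased in terms of cyclic free factors.
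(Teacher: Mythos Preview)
Your proof is correct and matches the intended deduction from the preceding proposition: the forward direction is the trivial basis-extension argument, and the backward direction is exactly the application of the proposition with $H=\gen{w}$ and $K=F_k$. This is precisely how the paper obtains the corollary, so there is nothing to add.
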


\begin{mycor}\label{freefactoromitambient}
For $k<n$, consider the standard inclusion $F_k=\gen{x_1,...,x_k}\sgr\gen{x_1,...,x_n}=F_n$. Let $H\sgr F_k$. Then $H$ is a free factor in $F_k$ if and only if $H$ is a free factor in $F_n$.
\end{mycor}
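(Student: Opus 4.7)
The plan is to prove the two implications separately, each as a short corollary of results already available.

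For the forward implication, suppose $H$ is a free factor in $F_k$. Choose a basis $\{h_1,\dots,h_r\}$ of $H$ and extend it to a basis $\{h_1,\dots,h_r,f_1,\dots,f_{k-r}\}$ of $F_k$. Since $\{x_1,\dots,x_k\}$ is itself a basis of $F_k$, the combined set $\{h_1,\dots,h_r,f_1,\dots,f_{k-r},x_{k+1},\dots,x_n\}$ is then a basis of $F_n$ extending the given basis of $H$, so $H$ is a free factor in $F_n$.

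For the reverse implication, suppose $H$ is a free factor in $F_n$. Then $H$ is automatically finitely generated (as a free factor of a finitely generated free group), and so is $K:=F_k$. I apply the preceding proposition to $K=F_k$ and the free factor $H\sgr F_n$: it gives that $H\cap F_k$ is a free factor in $F_k$. Since by hypothesis $H\sgr F_k$, we have $H\cap F_k=H$, so $H$ is a free factor in $F_k$.

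There is no real obstacle here; the whole point is that both corollaries are meant to be immediate applications of the proposition. The only care needed is to note that the finite-generation hypothesis of the proposition is satisfied because free factors of $F_n$ are finitely generated, and to observe that the trivial set-theoretic identity $H\cap F_k=H$ is what collapses the statement to what we want. Corollary \ref{primitiveomitambient} is then the special case $H=\gen{w}$, since $w$ is primitive in a group if and only if the cyclic subgroup it generates is a free factor.
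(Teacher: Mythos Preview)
Your argument is correct and is exactly the intended deduction: the paper states this as an immediate corollary of the preceding proposition (with no explicit proof), and your reverse implication is precisely the application $K=F_k$, free factor $H$, giving $H\cap F_k=H$ a free factor of $F_k$; the forward implication is the trivial basis-extension. One minor remark: the finite-generation hypothesis in the proposition is on $K$, not on $H$, so you only need that $F_k$ is finitely generated, which is immediate.
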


In particular, when talking about primitive elements and free factors, it often makes sense to omit mention of the ambient group.

\begin{mycor}\label{intersection}
Let $H,H'\sgr F_n$ be free factors. Then $H\cap H'$ is a free factor.
\end{mycor}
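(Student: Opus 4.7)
The plan is to apply the immediately preceding proposition with $K=H'$, together with transitivity of the free factor relation.

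First I would observe that $H'$ is finitely generated: since $H'$ is a free factor of $F_n$, some basis of $H'$ extends to a basis of $F_n$, so in particular $\text{rank}(H')\le n<\infty$. Hence the hypothesis of the proposition is satisfied with $K:=H'$ and the free factor $H$, yielding that $H\cap H'$ is a free factor of $H'$.

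Next I would invoke transitivity of the free factor relation: if $A$ is a free factor in $B$ and $B$ is a free factor in $C$, then $A$ is a free factor in $C$. This is immediate from the definition, since a basis of $A$ extends to a basis of $B$, which in turn extends to a basis of $C$, producing a basis of $C$ that contains the chosen basis of $A$. Applying this with $A=H\cap H'$, $B=H'$, $C=F_n$ gives that $H\cap H'$ is a free factor of $F_n$, which is the desired conclusion.

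There is no real obstacle here; the only thing to check is that free factors of a finitely generated free group are themselves finitely generated, which is immediate, and the transitivity statement, which is a direct unfolding of the definition. The content of the corollary is entirely carried by the preceding proposition.
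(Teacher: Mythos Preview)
Your proposal is correct and follows exactly the intended route: apply the preceding proposition with $K=H'$ (noting $H'$ is finitely generated as a free factor of $F_n$) to get $H\cap H'$ a free factor of $H'$, then use transitivity of the free factor relation. The paper adds the remark that, by symmetry, $H\cap H'$ is a free factor of each of $H$, $H'$, and $F_n$, but this is the same argument.
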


In the above statement, we mean that it is a free factor in $F_n$, and also in both $H$ and $H'$.

\section{A fine property of Whitehead's algorithm}\label{WhiteheadWords}

\subsection*{Whitehead automorphisms and Whitehead graph}

\begin{mydef}
Let $a\in\{x_1,...,x_n,\ol{x_1},...,\ol{x_n}\}$ and let $A\subseteq\{x_1,...,x_n,\ol{x_1},...,\ol{x_n}\}\setminus\{a,\ol a\}$. Define the \textbf{Whitehead automorphism} $\varphi=(A,a)$ as the automorphism given by $a\mapsto a$ and

\begin{center}
$\begin{cases}
x_j\mapsto x_j & \text{if } x_j,\ol{x_j}\not\in A\\
x_j\mapsto ax_j & \text{if } x_j\in A \text{ and } \ol{x_j}\not\in A\\
x_j\mapsto x_j\ol{a} & \text{if } x_j\not\in A \text{ and } \ol{x_j}\in A\\
x_j\mapsto ax_j\ol{a} & \text{if } x_j,\ol{x_j}\in A\\
\end{cases}$
\end{center}
\end{mydef}

The letter $a$ will be called the \textbf{acting letter}, and the set $A$ will be the set of letters we \textbf{act on}. Notice that my notation for Whitehead automorphisms is slightly different from the one found in Lyndon and Schupp's book \cite{LyndonSchupp}: they choose to include the acting letter $a$ inside the set $A$, while I prefer not to do so.

Let $w$ be a cyclically reduced word, whose reduced form is $w=b_1...b_l$, where we have $b_j\in\{x_1,...,x_n,\ol{x_1},...,\ol{x_n}\}$. Let $\varphi=(A,a)$ be a Whitehead automorphism. We can substitute each $b_j$ with the sequence $\varphi(b_j)$ (which is either $b_j$ or $ab_j$ or $b_j\ol{a}$ or $ab_j\ol{a}$): this produces a new writing $\varphi(b_1)...\varphi(b_l)$ which represents the word $\varphi(w)$; this writing will not be cyclically reduced in general. In what follows, when we speak of the free or cyclic reduction, we mean any sequence of moves in which an adjacent pair of letters $x_i\ol{x_i}$ or $\ol{x_i}x_i$ is replaced by the empty word.

\begin{mylemma}\label{onlya}
Let $w=b_1...b_l$ be a cyclically reduced word, and let $\varphi=(A,a)$ be a Whitehead automorphism. Then, in the process of cyclic reduction for the sequence $\varphi(b_1)...\varphi(b_l)$, no letter different from $a$ gets cancelled.
\end{mylemma}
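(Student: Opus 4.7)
The plan is to separate the letters of the unreduced concatenation $u=\varphi(b_1)\cdots\varphi(b_l)$ into two classes. Inside each $\varphi(b_j)$ there is a distinguished ``old'' letter $b_j$, while any $a$ or $\ol{a}$ that $\varphi$ prepends or appends is a ``new'' letter. Every new letter lies in $\{a,\ol{a}\}$, so the only letters of $u$ distinct from $a$ and $\ol{a}$ are the old letters $b_j$ with $b_j\notin\{a,\ol{a}\}$. The lemma is therefore equivalent to the claim that these ``true'' old letters survive every step of the cyclic reduction.

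Let $J=\{j_1<j_2<\dots<j_s\}$ (cyclically) be the set of indices $j$ for which $b_j\notin\{a,\ol{a}\}$. I would rewrite $u$ as a cyclic concatenation
\[
u = b_{j_1}W_1 b_{j_2}W_2\cdots b_{j_s}W_s,
\]
where each $W_k$ collects three contiguous groups of letters, all in $\{a,\ol{a}\}$: the $\ol{a}$ appended by $\varphi$ after $b_{j_k}$ (present iff $\ol{b_{j_k}}\in A$); the letters $\varphi(b_i)=b_i$ for $j_k<i<j_{k+1}$ (these are the letters of $w$ strictly between $b_{j_k}$ and $b_{j_{k+1}}$, each equal to $a$ or $\ol{a}$); and the $a$ prepended by $\varphi$ before $b_{j_{k+1}}$ (present iff $b_{j_{k+1}}\in A$). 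In particular $W_k$ freely reduces to a power $a^{r_k}$ for some $r_k\in\bZ$. Since $\ol{b_{j_k}}\notin\{a,\ol{a}\}$, the old letter $b_{j_k}$ can never cancel against an $a^{\pm 1}$ letter, and it can only disappear if its neighbouring $W$-segment reduces to the empty word and brings it adjacent to some $b_{j_{k'}}=\ol{b_{j_k}}$.

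The crux is therefore to rule out the simultaneous occurrence of $r_k=0$ and $b_{j_k}=\ol{b_{j_{k+1}}}$. Writing $W_k=\ol{a}^{p_k}a^{M_k}a^{q_k}$ with $p_k,q_k\in\{0,1\}$ recording the boundary contributions and $M_k\in\bZ$ the signed length of the middle piece (positive when it is a power of $a$, negative when it is a power of $\ol{a}$), the equation $r_k=-p_k+M_k+q_k=0$ forces $|M_k|\le1$. The case $M_k=0$ means $b_{j_k}$ and $b_{j_{k+1}}$ are already adjacent in $w$, so cyclic reducedness forbids $b_{j_k}=\ol{b_{j_{k+1}}}$. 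For $M_k=\pm 1$ one reads off $(p_k,q_k)=(1,0)$ or $(0,1)$, which translates into opposite containment conditions in $A$ for $\ol{b_{j_k}}$ and $b_{j_{k+1}}$; but the hypothesis $b_{j_k}=\ol{b_{j_{k+1}}}$ would say $\ol{b_{j_k}}=b_{j_{k+1}}$, forcing the two containments to agree and giving a contradiction. I expect this final piece of bookkeeping---showing that the arithmetic which forces $W_k$ to vanish is incompatible with $b_{j_k}$ and $b_{j_{k+1}}$ being inverses---to be the only real obstacle; once it is settled, every cancellation step necessarily involves at least one $a^{\pm 1}$ letter.
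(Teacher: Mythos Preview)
Your argument is correct and amounts to the same computation as the paper's proof. The paper argues by contradiction: it takes the \emph{first} cancellation involving a non-$a$ letter, observes that it must occur between two old letters $b_j$ and $b_k=\ol{b_j}$ with only $a$'s (or only $\ol a$'s) of $w$ in between, and then checks the two cases $\ol{b_j}\in A$ and $\ol{b_j}\notin A$ to see that at least one $a$ survives between them. Your global decomposition $u=b_{j_1}W_1\cdots b_{j_s}W_s$ with $r_k=-p_k+M_k+q_k$ is just a repackaging of the same case split: your $M_k\in\{-1,0,1\}$ cases correspond exactly to the paper's ``all $\ol a$'', ``adjacent in $w$'', and ``all $a$'' cases, and your contradiction via $\ol{b_{j_k}}=b_{j_{k+1}}$ having incompatible $A$-membership is precisely the paper's observation that $\ol{b_j}\in A\Leftrightarrow b_k\in A$ forces the appended $\ol a$ and prepended $a$ to appear or vanish together. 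The one step you leave implicit---that ruling out $(r_k=0,\,b_{j_k}=\ol{b_{j_{k+1}}})$ for every $k$ suffices for \emph{any} order of cancellations---follows because the word $b_{j_1}a^{r_1}\cdots b_{j_s}a^{r_s}$ is then already cyclically reduced, hence is the unique cyclic reduction of $u$, and contains every non-$a$ old letter.
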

\begin{proof}
For simplicity of notation, we prove the proposition only for the free reduction process; the proof for the cyclic reduction process is completely analogous.

Fix a process of free reduction for $\varphi(b_1)...\varphi(b_l)$. Suppose some cancellation takes place, involving a letter which is not $a$ nor $\ol{a}$. Consider the first such cancellation. Suppose this cancellation involves a letter from the block $\varphi(b_j)$ and one from the block $\varphi(b_k)$, with $j<k$. Then we must have $b_k=\ol{b_j}$, and either all the letters inbetween are $a$, or all of them are $\ol{a}$. This means that the word $w$ has the form either $...b_ja^d\ol{b_j}...$ or $...b_j\ol{a}^d\ol{b_j}...$ for some $d\ge0$. Also, since the writing $b_1...b_l$ was reduced, we must have $d>0$.

We assume $w$ has the form $...b_ja^d\ol{b_j}...$, the other case being completely analogous. If $\ol{b_j}\not\in A$, then the sequence $\varphi(b_1)...\varphi(b_l)$ has the form $...b_ja^d\ol{b_j}...$, and at least one $a$ letter survives between $b_j$ and $\ol{b_j}$, and thus $b_j$ is not allowed to cancel with $\ol{b_j}$. If $\ol{b_j}\in A$, then the sequence $\varphi(b_1)...\varphi(b_l)$ has the form $...b_j\ol{a}a^da\ol{b_j}...$, and again we see that at least one $a$ letter survives between $b_j$ and $\ol{b_j}$, and thus $b_j$ is not allowed to cancel with $\ol{b_j}$. This contradiction completes the proof.
\end{proof}

\begin{mydef}\label{defWhiteheadgraph}
Let $w$ be a cyclically reduced word. Define the \textbf{Whitehead graph} of $w$ as follows:

(i) We have $2n$ vertices labeled $x_1,...,x_n,\ol{x_1},...,\ol{x_n}$.

(ii) For every pair of consecutive letters in $w$, we draw an arc from the inverse of the first letter to the second. We also draw an arc connecting the inverse of the last letter of $w$ to the first letter of $w$, as if they were adjacent.
\end{mydef}

Notice that, $w$ being cyclically reduced, we never have any arc connecting a vertex to itself.

\begin{mydef}\label{defcutvertex}
Let $w$ be a cyclically reduced word. A vertex $a$ in the Whitehead graph of $w$ is called a \textbf{cut vertex} if it is non-isolated and at least one of the following two configurations happens:

(i) The connected component of $a$ doesn't contain $\ol a$.

(ii) The connected component of $a$ becomes disconnected if we remove $a$.
\end{mydef}

\subsection*{Whitehead's algorithm}

We are now ready to state Whitehead's theorem and our refinement of it (theorem \ref{finesse}).

\begin{mythm}\label{cutvertex}
Let $w$ be a cyclically reduced word, which is primitive but not a single letter. Then the Whitehead graph of $w$ contains a cut vertex.
\end{mythm}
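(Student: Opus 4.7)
The plan is to deduce Theorem \ref{cutvertex} from Theorem \ref{introfinesse}, exploiting the fine cancellation property to extract the combinatorial structure of the Whitehead graph. First, apply Theorem \ref{introfinesse} to $w$ to obtain a Whitehead automorphism $\varphi=(A,a)$ such that $|\varphi(w)|_{\mathrm{cyc}}<|w|_{\mathrm{cyc}}$ and every letter $a$ or $\ol a$ added during the application of $\varphi$ cancels in the cyclic reduction. Then translate this into a structural constraint on the Whitehead graph $\Gamma$: if $b_j\in A$, then $\varphi$ prepends an $a$ to $b_j$, and for this $a$ to cancel, its left neighbor (namely the last letter of $\varphi(b_{j-1})$) must be $\ol a$; a case check on the four possible forms of $\varphi(b_{j-1})$ shows this is equivalent to $\ol{b_{j-1}}\in A\cup\{a\}$. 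A mirror argument handles the appended $\ol a$'s. Setting $A':=A\cup\{a\}$, the result is: for every arc of $\Gamma$, if one endpoint lies in $A$, then the other lies in $A'$; in particular, $\Gamma$ has no arc joining $A$ to the complement $(A')^c$, which contains $\ol a$.

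Next, use the strict length drop to exhibit an arc from $a$ to a vertex of $A$. Bookkeeping the cancellations (classified by whether each of the two cancelling letters is \emph{added} by $\varphi$ or \emph{intrinsic} to $w$), and using the fine property that every added letter cancels, one computes
\[
|\varphi(w)|_{\mathrm{cyc}}-|w|_{\mathrm{cyc}}\;=\;-(N_{\mathrm{AI}}+N_{\mathrm{IA}}),
\]
where $N_{\mathrm{AI}}$ counts cancellations between an appended $\ol a$ and an intrinsic $b_{j+1}=a$, and $N_{\mathrm{IA}}$ is the symmetric count. Strict decrease forces $N_{\mathrm{AI}}+N_{\mathrm{IA}}>0$, and either type of cancellation exhibits an arc of $\Gamma$ incident to $a$ whose other endpoint lies in $A$.

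Finally, let $C$ be the connected component of $a$ in $\Gamma$. By the previous paragraph, $C$ is non-trivial and meets $A$. If $\ol a\notin C$, we are in case (i) of Definition \ref{defcutvertex}. Otherwise $\ol a\in C\cap(A')^c$, so $C$ meets both $A$ and $(A')^c$; by the separation established in the first paragraph, removing $a$ destroys every arc between these two subsets inside $C$, so $C\setminus\{a\}$ is disconnected, giving case (ii). Either way, $a$ is a cut vertex. The main obstacle is the length-formula bookkeeping: one must carefully distinguish added from intrinsic letters, use Lemma \ref{onlya} to exclude any non-$a$ cancellations, and invoke cyclic reducedness of $w$ to rule out secondary cancellations (for instance two intrinsic $a,\ol a$ letters becoming adjacent after a primary round). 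Once that accounting is in place, the fine property of Theorem \ref{introfinesse} does the bulk of the combinatorial work.
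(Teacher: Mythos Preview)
Your combinatorics is sound: the fine property does force every arc with an endpoint in $A$ to have its other endpoint in $A\cup\{a\}$, and the length bookkeeping (with Lemma~\ref{onlya} ruling out non-$a$ cancellations and cyclic reducedness of $w$ ruling out secondary intrinsic--intrinsic cancellations) correctly yields an arc from $a$ into $A$. The deduction that $a$ is then a cut vertex is clean.

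The problem is logical, not combinatorial: within this paper, Theorem~\ref{introfinesse} is not an independent input. It is established in the body as Theorem~\ref{finesse}, whose proof begins ``Let $a$ be a cut vertex in its Whitehead graph'' --- that is, it presupposes exactly the conclusion of Theorem~\ref{cutvertex}. The logical flow here is
\[
\text{Theorem~\ref{cutvertex}}\ \Longrightarrow\ \text{Theorems~\ref{Whitehead} and~\ref{finesse}}\ \Longrightarrow\ \text{Theorem~\ref{introfinesse}},
\]
so invoking Theorem~\ref{introfinesse} to prove Theorem~\ref{cutvertex} is circular. What you have actually written is (a correct proof of) the converse of the implication the paper proves between \ref{cutvertex} and \ref{finesse}: you show that the existence of a fine length-reducing Whitehead move forces a cut vertex, while the paper shows a cut vertex produces such a move. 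Neither direction bootstraps the other.

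The paper's proof of Theorem~\ref{cutvertex} is therefore necessarily of a different nature: it is a direct Stallings-folding argument that does not appeal to any version of Whitehead's theorem. One extends $w$ to a basis, builds the wedge-of-circles graph $G$, and folds down to the rose $R_n$; the penultimate graph $G^{(l-1)}$ has an explicit two-vertex form (figure~\ref{prerosa}), and the label-preserving map $\core{\gen{w}}\to G^{(l-1)}$ then exhibits the cut vertex directly. If you wish to salvage your approach, you would need an \emph{independent} source for Theorem~\ref{introfinesse} (the introduction notes it can be read off Whitehead's original $3$-manifold argument), but then you are importing the hard content from outside the paper rather than proving it.
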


\begin{mythm}\label{Whitehead}
Let $w$ be a cyclically reduced word, and suppose the Whitehead graph of $w$ contains a cut vertex. Then there is a Whitehead automorphism $\varphi$ such that the cyclic length of $\varphi(w)$ is strictly smaller than the cyclic length of $w$.
\end{mythm}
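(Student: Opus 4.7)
The plan is to construct an explicit Whitehead automorphism $\varphi=(A,a)$ from the given cut vertex $a$, and then to compute the change in cyclic length directly. Following Definition \ref{defcutvertex}, there are two cases. In case (i), where the connected component $\mathrm{Comp}(a)$ of $a$ in the Whitehead graph does not contain $\ol{a}$, I would take $A := \mathrm{Comp}(a) \setminus \{a\}$. In case (ii), where removing $a$ disconnects $\mathrm{Comp}(a)$ into several subcomponents, one of which (call it $C_0$) contains $\ol{a}$, I would take $A$ to be the union of all the other subcomponents. In both cases $A$ is a non-empty subset of $V \setminus \{a,\ol{a}\}$, where $V=\{x_1,\ldots,x_n,\ol{x_1},\ldots,\ol{x_n}\}$, so $(A,a)$ is a legitimate Whitehead automorphism.

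Next I would compute $\Delta := |\varphi(w)|_{\mathrm{cyc}} - |w|_{\mathrm{cyc}}$ by a case analysis over the edges of the Whitehead graph. The automorphism $\varphi$ prepends an $a$ to the substituted block of $b_j$ exactly when $b_j \in A$ and appends an $\ol{a}$ exactly when $\ol{b_j} \in A$; and by Lemma \ref{onlya} only $a$'s and $\ol{a}$'s can cancel during cyclic reduction. Hence all cancellations occur locally at the junctions between consecutive letters. Partitioning $V$ into $A$, $\{a\}$, $\{\ol{a}\}$, and $B := V \setminus (A \cup \{a,\ol{a}\})$, and classifying each edge $e_j=(\ol{b_j},b_{j+1})$ by the pair of subsets containing its endpoints, the per-junction contributions sum to
\[
\Delta \;=\; -m_{Aa} + m_{A\ol{a}} + m_{AB},
\]
where $m_{XY}$ counts the Whitehead-graph edges joining $X$ and $Y$. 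The three nontrivial contributions come from edges of type $\{A,a\}$ (the inserted $\ol{a}$ or $a$ annihilates an adjacent original $a$ or $\ol{a}$, giving $-1$), and edges of type $\{A,\ol{a}\}$ or $\{A,B\}$ (the inserted letter has no partner and survives, giving $+1$); edges inside $A$ cancel their own inserted $\ol{a}a$ pair and contribute $0$.

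To conclude, one just feeds in the construction of $A$: by choice, $A$ is a union of components of $\mathrm{Wh}(w) \setminus \{a\}$ that avoid both $\ol{a}$ and $B$, so no edge of $\mathrm{Wh}(w)$ goes from $A$ to $\ol{a}$ or to $B$; therefore $m_{A\ol{a}} = m_{AB} = 0$. On the other hand $a$ is non-isolated (by the cut-vertex hypothesis), every edge incident to $a$ has its other endpoint in $\mathrm{Comp}(a) \setminus \{a\}$, and in case (ii) each subcomponent constituting $A$ must itself be joined to $a$ by at least one edge. Thus $m_{Aa} \geq 1$, giving $\Delta \leq -1$.

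The main obstacle is the case analysis leading to the formula for $\Delta$: although Lemma \ref{onlya} rules out the cancellation of any non-$a$ letter, one still has to track carefully how inserted $a$'s and $\ol{a}$'s interact with the original occurrences of $a,\ol{a}$ inside $w$, and verify that no cancellation chain extends across several junctions. The needed locality follows from the observation that $\ol{b_j}\in A$ forces $b_j\notin\{a,\ol{a}\}$, so the original letters adjacent to an insertion cannot themselves be consumed and start a cascade; but turning this into the clean edge-count requires going through all sixteen sub-cases for $(u_j,v_j)$.
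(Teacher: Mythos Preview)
Your proposal is correct and follows essentially the same approach as the paper: both pick $A$ to be a union of components of the Whitehead graph minus $a$ that avoid $\ol a$, and then verify that the cyclic length strictly drops because there is at least one $A$--$a$ edge and no $A$--$\ol a$ or $A$--$B$ edge. The only cosmetic differences are that in case~(ii) the paper takes a single subcomponent rather than the union of all of them, and that the paper computes the length change by examining each maximal $a$-run between two non-$a$ letters directly (thereby handling only the cases that actually occur once $m_{A\ol a}=m_{AB}=0$) instead of appealing to your general per-junction formula $\Delta=-m_{Aa}+m_{A\ol a}+m_{AB}$.
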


\begin{mythm}\label{finesse}
The automorphism in theorem \ref{Whitehead} can be chosen in such a way that every $a$ or $\ol a$ letter, which is added when we apply $\varphi$ to $w$ letter by letter, immediately cancels (in the cyclic reduction process).
\end{mythm}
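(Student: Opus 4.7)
The plan is to refine the proof of theorem \ref{Whitehead} by a combinatorially precise choice of the set $A$. Write the cyclic word as $w = b_1 \cdots b_l$ and set $B := A \cup \{a\}$ as subsets of the letter set $V = \{x_1,\ldots,x_n,\ol{x_1},\ldots,\ol{x_n}\}$. A direct case analysis of the definition of $\varphi = (A,a)$ shows that the last letter of $\varphi(b_j)$ equals $\ol a$ iff $\ol{b_j} \in B$, and the first letter of $\varphi(b_{j+1})$ equals $a$ iff $b_{j+1} \in B$. Consequently, an added $\ol a$ at the end of $\varphi(b_j)$ (present exactly when $\ol{b_j} \in A$) is immediately cancelled in the letter-by-letter concatenation iff $b_{j+1} \in B$, and symmetrically for the added $a$ at the start of $\varphi(b_{j+1})$. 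In terms of the Whitehead graph, the fine property is therefore equivalent to the condition that no Whitehead-graph edge connects a vertex of $A$ to a vertex outside $B$; equivalently, $A$ is a union of connected components of the Whitehead graph with the vertex $a$ deleted.

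With this condition imposed, I would next compute the cyclic length change. By lemma \ref{onlya} only $a$ and $\ol a$ letters cancel; a short check rules out chain cancellations, since any such chain would eventually force a non-$a$ cancellation, contradicting lemma \ref{onlya}. Hence cancellations in the cyclic reduction of $\varphi(w)$ are in bijection with Whitehead-graph edges having both endpoints in $B$. Since the number of added letters equals the sum of Whitehead-graph degrees over vertices of $A$, a bookkeeping count gives
\[
L(\varphi(w)) - L(w) \;=\; e(A, V\setminus B) \;-\; e(A, \{a\}),
\]
where $e(X, Y)$ counts Whitehead-graph edges between disjoint vertex sets $X, Y$. Under the fine condition $e(A, V\setminus B) = 0$ this reduces to $-e(A, \{a\})$, which is strictly negative iff $A$ contains at least one neighbor of $a$.

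It remains to exhibit such an $A$, and here the cut-vertex hypothesis enters. In case (i) of definition \ref{defcutvertex}, the connected component of $a$ in the Whitehead graph does not contain $\ol a$, so I take $A$ to be any connected component of this $a$-component with $a$ removed. In case (ii), the $a$-component splits into at least two pieces on removing $a$, and I pick any piece not containing $\ol a$. In either case the chosen piece is a full connected component of the Whitehead graph with $a$ deleted, which automatically gives the fine condition; and, since it lies in the $a$-component, it must contain at least one neighbor of $a$ (otherwise it would be disconnected from $a$ in the full Whitehead graph), yielding the strict length decrease.

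The main delicate point is the length-change bookkeeping, particularly verifying that in the fine setup cancellations are in bijection with $B$-internal Whitehead-graph edges with no chain cancellations arising, and correctly handling the degenerate cases where an original $b_j \in \{a, \ol a\}$ participates in a cancellation alongside an added letter. Both subtleties are controlled by working with $B = A \cup \{a\}$, which symmetrically absorbs the original $a$ letter, together with lemma \ref{onlya}, which propagates the ``only $a$ cancels'' property through any hypothetical chain.
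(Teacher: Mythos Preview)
Your proposal is correct and follows essentially the same approach as the paper: both choose $A$ to be a connected component of the Whitehead graph with $a$ removed that avoids $\ol a$ and meets the neighbourhood of $a$, which is precisely what forces every inserted letter to cancel immediately while guaranteeing at least one genuine cancellation. The only difference is presentational: the paper verifies the fine property and the length drop by a direct case analysis on segments $\ldots yz\ldots$ and $\ldots y a^k z\ldots$, whereas you package the same information into the graph-theoretic condition $e(A,V\setminus B)=0$ and the length formula $L(\varphi(w))-L(w)=-e(A,\{a\})$; your ``no chain cancellations'' check is exactly what the paper's segment-by-segment argument establishes implicitly.
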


\begin{myex}
Let $w=xyxyx\ol yz$ in $F_3$. Consider the automorphism $\varphi=(\{\ol x\},y)$, meaning that $x\mapsto x\ol y$ and $y\mapsto y$ and $z\mapsto z$: the word becomes $\varphi(w)=xxx\ol y\ol yz$. This is shorter, so is would be fine for theorem \ref{Whitehead}. But it is not fine for theorem \ref{finesse}, because a letter $\ol y$ appears between the last $x$ and the $z$, and it does not cancel.
\end{myex}

We are now going to prove theorems \ref{cutvertex}, \ref{Whitehead} and \ref{finesse}. The proof of \ref{cutvertex} that we give is essentially contained in \cite{Heusener}, but we will make use of variations of the argument in what follows, so I prefer to rewrite it here.

\begin{proof}[Proof of theorem \ref{cutvertex}]
Let $w$ be cyclically reduced and primitive. We will assume that $w$ contains all the letters $x_1,...,x_n$ at least once: otherwise, if $w$ only contains the letters $x_1,...,x_k$, then we can just apply the same argument in the free factor $\gen{x_1,...,x_k}\sgr\gen{x_1,...,x_n}=F_n$ (using corollary \ref{primitiveomitambient}).

Since $w$ is primitive, we can take a basis $w=w_1,w_2,...,w_n$ of reduced words. We can build the graph $G$ given by a basepoint $*$, together with a path $p_i$ for each $w_i$: the path $p_i$ goes from $*$ to $*$, and contains an edge for each letter appearing in $w_i$ (in such a way that, moving around the path $p_i$, we read exactly the word $w_i$). Let $G(w)$ denote the subgraph of $G$ given by the only cycle corresponding to the generator $w$.

We now apply a sequence of folding operations to the graph $G$, in order to get a sequence $G\rar G'\rar...\rar G^{(l-1)}\rar G^{(l)}$ as in proposition \ref{folding}: each map $G^{(i)}\rar G^{(i+1)}$ consists of a single folding operation, and no further folding operation can be applyed to $G^{(l)}$. Since $w_1,...,w_n$ is a basis, we have that $G^{(l)}$ is the standard $n$-rose $R_n$. Since $w_1$ is cyclically reduced and $w_2,...,w_n$ are reduced, lemma \ref{novalence1} yields that no graph $G^{(i)}$ contains any valence-$1$ vertex. A folding operation can decrease the rank of the fundamental group, but it can't increase it; since $\pi_1(G)$ has the same rank as $\pi_1(R_n)$, we must have that each folding operation is rank-preserving.

We now look at the graph $G^{(l-1)}$: it doesn't contain any valence-$1$ vertex, and a single rank-preserving folding operation sends it to the standard $n$-rose. It is quite easy to see that $G^{(l-1)}$ has to be of the form described in figure \ref{prerosa} below, for some $1\le\alpha\le\beta\le n$ with $\alpha<n$ (up to permutation of the letters, and up to substitution of some letter with its inverse).

We have a map of graphs $f:G(w)\rar G^{(l-1)}$ preserving the orientations and the labels on the edges (given by the inclusion $G(w)\rar G$ followed by the sequence of foldings). Suppose we have two adjacent letters $w=...yz...$: this means that $G(w)$ contains two edges labeled $y$ and $z$ with a common endpoint $u$. We either have $f(u)=v$ or $f(u)=v'$, meaning that $\ol{y}$ and $z$ are either both in $\{x_1,\ol{x_1},x_2,\ol{x_2},...,x_\alpha,\ol{x_\alpha},x_{\alpha+1},...,x_\beta\}$ or both in $\{\ol{x_1},\ol{x_{\alpha+1}},...,\ol{x_\beta},x_{\beta+1},\ol{x_{\beta+1}},...,x_n,\ol{x_n}\}$. This tells us that, if we remove the vertex $\ol{x_1}$ from the Whitehead graph of $w$, we get the disjoint union $V\sqcup V'$ of two separate graphs: $V$ with vertices $x_1,x_2,\ol{x_2},...,x_\alpha,\ol{x_\alpha},x_{\alpha+1},...,x_\beta$ and $V'$ with vertices $\ol{x_{\alpha+1}},...,\ol{x_\beta},x_{\beta+1},\ol{x_{\beta+1}},...,x_n,\ol{x_n}$.

If the image $f(G(w))\subseteq G^{(l-1)}$ crosses both the edges labeled $x_1$, then in the Whitehead graph of $w$ we have that $\ol{x_1}$ is connected to at least one vertex in $V$ and to one vertex in $V'$; this means $\ol{x_1}$ is a cut vertex. If $f(G(w))$ crosses the edge labeled $x_1$ with distinct endpoints, but not the other, then $\ol{x_1}$ is connected to $V'$ but not to $V$; and again $\ol{x_1}$ is a cut vertex. If $f(G(w))$ does not cross the arc labeled $x_1$ with distinct endpoints, then we make use of the assumption that $w$ contains every letter at least once; we get that $f(G(w))$ has to contain the edge $x_{\alpha+i}$ for some $1\le i\le\beta$; this gives that any of $x_{\alpha+i},\ol{x_{\alpha+i}}$ is a cut vertex for the Whitehead graph of $w$.
\end{proof}

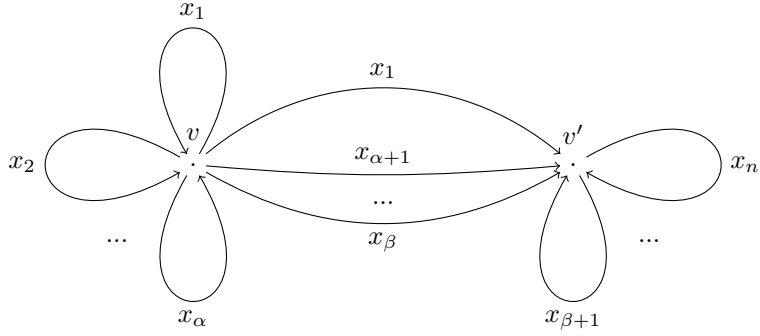
\begin{figure}[h!]
\centering
\begin{tikzpicture}
\node (1) at (0,0) {.};
\node (2) at (5,0) {.};
\node () at (0,0.4) {$v$};
\node () at (5,0.4) {$v'$};

\draw[->] (1) to[out=60,in=120,looseness=40] node[above]{$x_1$} (1);
\draw[->] (1) to[out=150,in=210,looseness=35] node[left]{$x_2$} (1);
\node (3) at (-1,-1) {...};
\draw[->] (1) to[out=240,in=300,looseness=40] node[below]{$x_\alpha$} (1);

\draw[->] (1) to[out=40,in=140,looseness=1] node[above]{$x_1$} (2);
\draw[->] (1) to[out=-5,in=185,looseness=1] node[above]{$x_{\alpha+1}$} (2);
\node (4) at (2.5,-0.5) {...};
\draw[->] (1) to[out=-30,in=210,looseness=1] node[below]{$x_\beta$} (2);

\draw[->] (2) to[out=30,in=-30,looseness=35] node[right]{$x_n$} (2);
\node (5) at (6,-1) {...};
\draw[->] (2) to[out=-60,in=-120,looseness=40] node[below]{$x_{\beta+1}$} (2);
\end{tikzpicture}
\caption{The generic graph $G^{(l-1)}$. This contains exactly one edge with each label, except for the two edges labeled $x_1$. Those two edges have to be folded in order to obtain the $n$-rose.}\label{prerosa}
\end{figure}

We now prove that theorem \ref{cutvertex} implies theorem \ref{Whitehead}, together with the fine property of theorem \ref{finesse}.

\begin{proof}[Proof of theorems \ref{Whitehead} and \ref{finesse}]
Let $w$ be cyclically reduced, and let $a$ be a cut vertex in its Whitehead graph.

If the connected component of $a$ does not contain $\ol{a}$, then we take the set $A$ to be that connected component (excluding $a$ itself). Otherwise, take the connected component of $a$ and remove $a$ itself: we are left with at least two nonempty connected components, and at least one of these components does not contain $\ol a$; take $A$ to be such a component. In both cases we consider the Whitehead automorphism $\varphi=(A,a)$. We look at what happens between two consecutive non-$a$ non-$\ol a$ letters in $w$ when we apply the automorphism $\varphi$.

Suppose we have two consecutive letters $w=...yz...$ with $y,z\in\{x_1,...,x_n,\ol{x_1},...,\ol{x_n}\}\setminus\{a,\ol a\}$. Then we have an arc from $\ol{y}$ to $z$ in the Whitehead graph, so we are either acting on neither of $\ol{y},z$ or on both of them. If we are not acting on them, then $\varphi(w)=...yz...$ and the word is not affected between $y$ and $z$. If we are acting on both of them, then $\varphi(w)=...y\ol aaz...=...yz...$ and every $a$ and $\ol a$ which appears there immediately cancels, and again the word is not affected between $y$ and $z$.

Suppose we have in $w$ a segment of the form $w=...ya^kz...$ with $y,z\in\{x_1,...,x_n,\ol{x_1},...,\ol{x_n}\}\setminus\{a,\ol a\}$ and $k\ge1$ (the case $k\le-1$ is analogous). This means that we have an arc from $\ol{a}$ to $z$, and thus we are not acting on $z$.
If we are not acting on $\ol y$, then $\varphi(w)=...ya^kz...$ and the word is not affected between $y$ and $z$.
If we are acting on $\ol y$, then $\varphi(w)=...y\ol aa^kz...=...ya^{k-1}z...$ and the number of $a$ letters strictly decreases between $y$ and $z$.

This shows that the finesse \ref{finesse} holds for this automorphism. To conclude we notice that, since there is at least one arc between $a$ and a vertex of $A$, at least one cancellation takes place, giving a (strict) decrease in the cyclic length of $w$, yielding theorem \ref{Whitehead}.
\end{proof}

\section{About primitive elements in a subgroup of a free group}\label{InterestingCriterion}

Let $H\sgr F_n$ be any subgroup. If $H$ is a free factor, then of course every element which is primitive in $H$ has to be primitive in $F_n$ too. We here deal with a converse: if $H$ is not a free factor, then there is an element which is primitive in $H$, but not in $F_n$. This section is completely dedicated to the proof of the existence of such a witness.

\begin{mythm}\label{primitiveinfreefactors}
Let $H\sgr F_n$ be a finitely generated subgroup. Suppose that every element of $H$ which is primitive in $H$ is also primitive in $F_n$. Then $H$ is a free factor.
\end{mythm}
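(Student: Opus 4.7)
The plan is to prove the contrapositive: assuming $H$ is not a free factor, I aim to exhibit an element of $H$ that is primitive in $H$ but not primitive in $F_n$. By Theorem \ref{cutvertex}, it suffices to produce a primitive $w$ of $H$ whose cyclic Whitehead graph has no cut vertex.

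First I would normalize: among all $\Phi(H)$ for $\Phi \in \Aut(F_n)$, choose one minimizing $|\bcore{H}|$ (the number of edges of the core graph). This replacement is harmless, since both the hypothesis and the property of being a free factor are $\Aut(F_n)$-invariant. If $\bcore{H}$ were a rose, then $H$ would be a standard free factor, contradicting the contrapositive assumption; so $\bcore{H}$ has more than one vertex. It is convenient to introduce the \emph{Whitehead graph of the subgroup} $\mathrm{Wh}(H)$, with $2n$ vertices $x_1^\pm,\ldots,x_n^\pm$ and an arc for every pair of distinct edge-ends meeting at a common vertex of $\bcore{H}$.

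Next I would show that $\mathrm{Wh}(H)$ has no cut vertex. If there were a cut vertex $a$, take the component $A$ as in the proofs of Theorems \ref{Whitehead} and \ref{finesse}; for the Whitehead automorphism $\varphi=(A,a)$, the case analysis of the proof of Theorem \ref{finesse} can be performed vertex-by-vertex along $\bcore{H}$, because every relevant adjacency in $\bcore{H}$ is recorded as an arc of $\mathrm{Wh}(H)$. The conclusion is that $\bcore{\varphi(H)}$ is obtained from $\bcore{H}$ by collapsing at least one edge, strictly reducing $|\bcore{\cdot}|$ and contradicting minimality.

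Knowing that $\mathrm{Wh}(H)$ has no cut vertex, I would finally construct a primitive $w \in H$ whose cyclic Whitehead graph (as a word) contains enough arcs of $\mathrm{Wh}(H)$ to itself have no cut vertex; then Theorem \ref{cutvertex} certifies that $w$ is not primitive in $F_n$, furnishing the required witness against the hypothesis. A natural candidate is a Nielsen-style concatenation $w = w_{i_1}^{\varepsilon_1}\cdots w_{i_s}^{\varepsilon_s}$ of signed basis elements of $H$, with the junctions chosen so that the arcs they realize defeat every potential cut-vertex configuration. The main obstacle is exactly this construction: while a combinatorial closed loop in $\bcore{H}$ traversing every adjacency is easy to describe, producing one that is primitive in $H$ is delicate. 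I expect the argument to use Nielsen moves to transport primitivity through the concatenation, with Theorem \ref{finesse} invoked again to keep the fine combinatorial structure under control.
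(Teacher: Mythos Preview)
Your approach is genuinely different from the paper's, and the first two steps are sound in spirit: minimizing the size of $\bcore{H}$ over $\Aut(F_n)$ and then arguing that the resulting subgroup Whitehead graph has no cut vertex is essentially the content of the paper's Theorems~\ref{cutvertex2}--\ref{finesse2} (which are developed \emph{after} the present theorem), and your vertex-by-vertex transfer of the case analysis of Theorem~\ref{finesse} is the right intuition for why a cut vertex would yield a strictly smaller core.

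The genuine gap is the third step, which you yourself flag as ``the main obstacle''. You need a primitive element $w\in H$ whose cyclic Whitehead graph in $F_n$ has no cut vertex, and you offer only a heuristic: concatenate signed basis elements and hope Nielsen moves preserve both primitivity and enough turns. This is not a proof, and the difficulty is real. A closed reduced loop in $\core{H}$ that crosses every turn is easy to produce, but such a loop typically represents an element of $H$ that is far from primitive---it is a long word in any basis, and there is no reason that Nielsen-reducing it to a primitive preserves the turns you need. Conversely, restricting to obviously primitive products such as $w_1w_2\cdots w_k$ gives you no control over which turns of $\core{H}$ are actually realized; the resulting Whitehead graph may well be a proper subgraph of $\mathrm{Wh}(H)$ with a cut vertex. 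Until you supply an explicit construction (or an existence argument) for such a $w$, the proof is incomplete.

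By contrast, the paper avoids this obstacle entirely with a double induction: first on $\rank{H}$, reducing to the case $H=\gen{x_1,\dots,x_k,w}$, and then on the length of $w$. The key move is to apply the hypothesis to a \emph{single, explicit} primitive of $H$, namely $v=x_1wx_1z$, where $z$ is a fixed word in $x_1,\dots,x_k$ whose Whitehead graph joins every pair of vertices among $x_1^{\pm},\dots,x_k^{\pm}$. Primitivity of $v$ in $H$ is trivial (it equals $w$ times an element of $\gen{x_1,\dots,x_k}$), and the purpose of $z$ is to force any fine Whitehead move (Theorems~\ref{Whitehead} and~\ref{finesse}) that shortens $v$ to either fix each $x_i$ or conjugate them all, so that after the move one is back in the inductive setup $\gen{x_1,\dots,x_k,w'}$ with a strictly shorter $w'$. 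The fine property is used essentially in the delicate case where the acting letter is $x_1$. This strategy never needs a single primitive realizing all turns of $\core{H}$, which is exactly the step you are missing.
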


\begin{proof}
The proof proceeds by induction on the rank of the subgroup. For the base step, we notice that for a subgroup of rank $1$ the statement is trivially true. For the inductive step, suppose we know the statement to be true for subgroups of rank $k$, and we want to prove it for subgroups of rank $k+1$.

Take a subgroup $H=\gen{w_1,...,w_{k+1}}$ of rank $k+1$ (meaning that $w_1,...,w_{k+1}$ is a basis for $H$), and suppose that every element $v$ which is primitive in $H$ is also primitive in $F$. We consider the subgroup $H'=\gen{w_1,...,w_k}$, and we notice that every element $v$ which is primitive in $H'$ is also primitive in $H$, and thus is primitive in $F$. Then $H'$ has rank $k$ and satisfies the hypothesis of the theorem, so by inductive hypothesis we get that $H'$ is primitive. So we can take an automorphism $\theta:F\rar F$ with $\theta(w_i)=x_i$ for $i=1,...,k$. Instead of proving that $H$ primitive in $F$, we prove that $\theta(H)$ is primitive in $F$; but $\theta(H)=\gen{x_1,...,x_k,\theta(w_{k+1})}$, so it is enough to prove the statement of theorem \ref{primitiveinfreefactors} for subgroups $H$ of the form $H=\gen{w,x_1,...,x_k}$.

The statement in the case of subgroups of the form $H=\gen{w,x_1,...,x_k}$ will be proved by induction on the length of $w$. The base step where $w$ has length one is trivial.

We observe that, if the first (or the last) letter of $w$ is $y\in\{x_1,...,x_k,\ol{x_1},...,\ol{x_k}\}$, then we can define the shorter word $w'=\ol{y}w$ (or $w\ol{y}$). But then $H=\gen{w,x_1,...,x_k}=\gen{w',x_1,...,x_k}$, and so we are done by inductive hypothesis. Thus in the following we assume this is not the case.

We consider the word $v=x_1wx_1z$ where $z$ is any word in the letters $\{x_1,...,x_k\}$ with the following properties:

(i) The Whitehead graph of $z$ contains at least one edge joining each pair of distinct vertices in $\{x_1,...,x_k,\ol x_1,...,\ol x_k\}$.

(ii) When we write $x_1wx_1z$ we get a cyclically reduced word, without any cancellation needed.

\noindent For example we may take $z=(x_1x_1)(x_2x_2)...(x_kx_k)\prod_{1\le i<j\le k}x_1(x_ix_j)(x_i\ol{x_j})$, where the factors in the product are ordered lexicographically (but any ordering works).

The word $v$ is primitive in the subgroup $H$, so by the hypothesis it has to be primitive in $F$, and in particular we can take an automorphism $\varphi$ satisfying theorems \ref{Whitehead} and \ref{finesse}. We now look at what happens to the letters $x_1,...,x_k$ and to the word $w$.

\

CASE 1: Suppose the acting letter $a$ is different from $x_1,...,x_k,\ol x_1,...,\ol x_k$.

Then $\{x_1,...,x_k,\ol x_1,...,\ol x_k\}$ is either contained in $A$ or disjoint from $A$. This is because the vertices $\{x_1,...,x_k,\ol x_1,...,\ol x_k\}$ are all pairwise connected in the Whitehead graph of $v$.

SUBCASE 1.1: Suppose $\{x_1,...,x_k,\ol x_1,...,\ol x_k\}$ is disjoint from $A$.

This means that $\varphi(x_1)=x_1,...,\varphi(x_k)=x_k$ and $\varphi\gen{w,x_1,...,x_k}=\gen{\varphi(w),x_1,...,x_k}$.

We have $\varphi(v)=x_1\varphi(w)x_1z$ which is cyclically reduced, and thus has to be strictly shorter than $v=x_1wx_1z$. But in $x_1\varphi(w)x_1z$ we can only have cancellations inside $\varphi(w)$, and so we are able to deduce that $\varphi(w)$ is strictly shorter than $w$, and we are done by inductive hypothesis.

SUBCASE 1.2: $\{x_1,...,x_k,\ol x_1,...,\ol x_k\}$ is contained in $A$.

This means that $\varphi(x_1)=\ol ax_1a,...,\varphi(x_k)=\ol ax_ka$.

Moreover we have $\varphi\gen{w,x_1,...,x_k}=\ol a\gen{a\varphi(w)\ol a,x_1,...,x_k}a$.

We have $\varphi(v)=\ol a x_1(a\varphi(w)\ol a)x_1za$ which cyclically reduces to $x_1(a\varphi(w)\ol a)x_1z$, and thus this has to be strictly shorter than $v=x_1wx_1z$. But in $x_1(a\varphi(w)\ol a)x_1z$, we can only have cancellations inside $(a\varphi(w)\ol a)$, and so we deduce that $a\varphi(w)\ol a$ is strictly shorter than $w$, and we are done by inductive hypothesis.

\

CASE 2: Suppose the acting letter $a$ is one of $x_2,...,x_k,\ol x_2,...,\ol x_k$.

Then $\{x_1,...,x_k,\ol x_1,...,\ol x_k\}$ is disjoint from $A$, because all the vertices $\{x_1,...,x_k,\ol x_1,...,\ol x_k\}\setminus\{a,\ol a\}$ are connected to $\ol a$ in the Whitehead graph of $v$.

Now we have $\varphi(x_1)=x_1,...,\varphi(x_k)=x_k$ and $\varphi\gen{w,x_1,...,x_k}=\gen{\varphi(w),x_1,...,x_k}$.

We proceed exactly as in subcase 1.1. The key point is that, since $a\not=x_1,\ol x_1$, when we write $x_1\varphi(w)x_1z$ the two $x_1$ letters cannot cancel against $\varphi(w)$. We get that $\varphi(w)$ is strictly shorter than $w$, and we are again done by inductive hypothesis.

\

CASE 3: Suppose $a=x_1$ (the case $a=\ol x_1$ is completely analogous).

As in case 2, we get that $\{x_1,...,x_k,\ol x_1,...,\ol x_k\}$ is disjoint from $A$, and thus $\varphi(x_1)=x_1,...,\varphi(x_k)=x_k$ and $\varphi\gen{w,x_1,...,x_k}=\gen{\varphi(w),x_1,...,x_k}$.

We have $\varphi(v)=(x_1\varphi(w))x_1z$ which is cyclically reduced, and has thus to be strictly shorter than $v=x_1wx_1z$. In $(x_1\varphi(w))x_1z$, the only cancellations can happen inside $(x_1\varphi(w))$, so we are able to deduce that $x_1\varphi(w)$ has to be strictly shorter than $x_1w$. Now some care is needed.

If $x_1$ does not cancel against $\varphi(w)$, then we get that $\varphi(w)$ is strictly shorter than $w$ and we are done by inductive hypothesis.

Otherwise, let $t$ be the first letter of $w$, and we must have $t\in A$. We look at the arcs between $a=x_1$ and $A$ in the Whitehead graph of $v$: we certainly have at least one arc from $x_1$ to $t$.

SUBCASE 3.1: If we have more than one arc from $x_1$ to $t$, or if we have any other arc from $x_1$ to $A$, then all of those arcs give cancellations inside $\varphi(w)$. We already knew that $x_1\varphi(w)$ was strictly shorter than $x_1w$, but now we got at least one additional cancellation inside $\varphi(w)$, so we are able to deduce that $x_1\varphi(w)$ is strictly shorter than $w$. Now we observe that $\varphi\gen{w,x_1,...,x_k}=\gen{x_1\varphi(w),x_1,...,x_k}$, and we are done by inductive hypothesis.

SUBCASE 3.2: Suppose we only have one arc from $x_1$ to $t$, and no other arc from $x_1$ to $A$. If the vertex $t$ has degree at least $2$, then we use $t$ instead of $x_1$ as cut vertex for the Whitehead graph of $v$, and we end up in case 1, and we are done. If the vertex $t$ has degree $1$, then the letter $t$ appears exactly once inside $w$; in this case we have the automorphism $\eta:F\rar F$ which keeps all the letters fixed, except for $t\mapsto w$; this gives $\eta\gen{t,x_1,...,x_k}=\gen{w,x_1,...,x_k}$, showing that the subgroup is primitive, as desired.
\end{proof}

\begin{myrmk}
We proved the theorem for $H$ finitely generated subgroup, but that hypothesis can easily be waived. For a subgroup of rank greater than the rank of $F$ (and, as a consequence, for a subgroup of infinite rank), the hypothesis of theorem \ref{primitiveinfreefactors} can't hold.
\end{myrmk}

\section{Whitehead's algorithm for free factors}\label{WhiteheadSubgroups}

\subsection*{Whitehead graph for subgroups}

\begin{mydef}\label{Letters}
Let $G$ be a $\grafo$ and let $v\in G$ be a vertex. Define the \textbf{letters at $v$} to be the subset $L(v)\subseteq\{x_1,...,x_n,\ol{x_1},...,\ol{x_n}\}$ of the labels of the edges coming out of $v$. More precisely, we have $x_i\in L(v)$ if and only if $G$ contains an edge labeled $x_i$ coming out of $v$, and $\ol{x_i}\in L(v)$ if and only if $G$ contains an edge labeled $x_i$ going into $v$.
\end{mydef}

\begin{mydef}
Let $G$ be a $\grafo$. Define the \textbf{Whitehead graph} of $G$ as follows:

(i) We have $2n$ vertices labeled $x_1,...,x_n,\ol{x_1},...,\ol{x_n}$.

(ii) For every vertex $v\in G$ and for every pair $y,z\in L(v)$ of distinct letters at $v$, we draw an arc from $y$ to $z$ in the Whitehead graph.
\end{mydef}

This means that the Whitehead graph of $G$ contains an edge for every (legal) turn in $G$. Notice that the Whitehead graph contains a complete subgraph with vertex set $L(v)$ for every vertex $v\in G$; moreover, the Whitehead graph is exactly the union of these complete subgraphs. Notice that, when we apply a folding operation to a $\grafo$ $G$, the Whitehead graph of $G$ can gain new edges, but it doesn't lose any.

We define the Whitehead graph of a nontrivial finitely generated subgroup $H\sgr F_n$ to be the Whitehead graph of $\core{H}$. When the subgroup $H$ is generated by a single word $H=\gen{w}$, the Whitehead graph of $H$ coincides with the Whitehead graph of the cyclic reduction of $w$; in this sense, this notion of Whitehead graph is a generalization of the previous definition \ref{defWhiteheadgraph}. We can also define the notion of cut vertex for the Whitehead graph of a subgroup exactly as in definition \ref{defcutvertex}.

\subsection*{Whitehead automorphisms and subdivision of graphs}

We are now interested in how the core graph of a subgroup changes when we apply a Whitehead automorphism. We are thus going to describe an operation which I call \textbf{subdivision}, which we perform on an $\grafo$. In what follows, we work with a fixed Whitehead automorphism $\varphi=(A,a)$ and we assume that $a\in\{x_1,...,x_n\}$. The case where $a\in\{\ol{x_1},...,\ol{x_n}\}$ is completely analogous: whenever we would have an edge labaled $a$ with a certain orientation, we consider instead an edge labeled $\ol{a}$ and with opposite orientation.

Let $G$ be an $\grafo$. Choose an edge $e\in G$, oriented and labeled with a letter $y\in\{x_1,...,x_n\}$. If $y\in A$ and $\ol{y}\not\in A$, then we subdivide $e$ in two edges, and to the first we give the label $a$ and the orientation of $e$, and to the second we give the label $y$ and the same orientation. If $y\not\in A$ and $\ol{y}\in A$, then we subdivide $e$ in two edges, and to the first we give the label $y$ and the same orientation of $e$, and to the second we give the label $a$ and opposite orientation. If $y,\ol{y}\not\in A$ we do not change the edge $e$, and if $y,\ol{y}\in A$ then we perform both transformations on $e$, as in figure \ref{subdivisionedges}.

\begin{figure}[h!]
\centering
\begin{tikzpicture}[scale=0.66]
\node (1) at (2,6) {.};
\node (2) at (6,6) {.};
\node (3) at (2,4) {.};
\node (4) at (6,4) {.};
\draw[->] (1) to node[above]{$x$} (2);
\draw[->] (3) to node[above]{$x$} (4);

\node (5) at (8,6) {.};
\node (6) at (12,6) {.};
\node (7) at (8,4) {.};
\node (8) at (8.8,4) {.};
\node (9) at (12,4) {.};
\draw[->] (5) to node[above]{$y$} (6);
\draw[->] (7) to node[above]{$x$} (8);
\draw[->] (8) to node[above]{$y$} (9);

\node (10) at (14,6) {.};
\node (11) at (18,6) {.};
\node (12) at (14,4) {.};
\node (13) at (17.2,4) {.};
\node (14) at (18,4) {.};
\draw[->] (10) to node[above]{$z$} (11);
\draw[->] (12) to node[above]{$z$} (13);
\draw[->] (14) to node[above]{$x$} (13);

\node (15) at (20,6) {.};
\node (16) at (24,6) {.};
\node (17) at (20,4) {.};
\node (18) at (20.8,4) {.};
\node (19) at (23.2,4) {.};
\node (20) at (24,4) {.};
\draw[->] (15) to node[above]{$t$} (16);
\draw[->] (17) to node[above]{$x$} (18);
\draw[->] (18) to node[above]{$t$} (19);
\draw[->] (20) to node[above]{$x$} (19);
\end{tikzpicture}
\caption{The effect of the subdivision operation on the single edges. Here $F_4=\gen{x,y,z,t}$ and $\varphi=(\{y,\ol{z},t,\ol{t}\},x)$, meaning that $\varphi(x)=x$ and $\varphi(y)=xy$ and $\varphi(z)=z\ol{x}$ and $\varphi(t)=xt\ol{x}$. Above we see the edges before the subdivision, while below we see them after the subdivision.}
\label{subdivisionedges}
\end{figure}
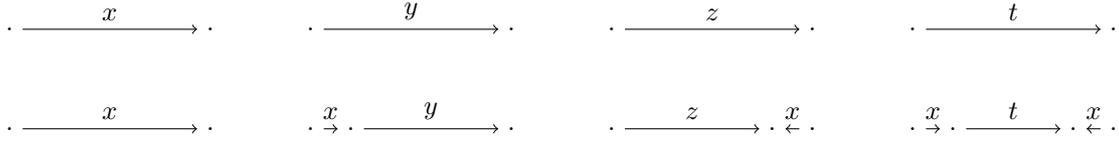

We apply the subdivision operation to each edge of $G$, in order to obtain another $\grafo$, which will be called \textbf{$\varphi$-subdivided graph}, which we denote by $\subd{\varphi}{G}$.

Notice that for every vertex $v\in G$ we also have a vertex $v\in\subd{\varphi}{G}$: we say that $v\in\subd{\varphi}{G}$ is an \textbf{old vertex}. For every edge $e\in G$, the subdivision on $e$ gives at most three edges, and exactly one of them has the same label as $e$: we say that edge is an \textbf{old edge}. If we have a vertex $v\in G$ and a letter $b\in L(v)\cap A$, then the subdivision operation on the corresponding edge will create a new vertex $u$ together with an edge labeled $a$ going from $v$ to $u$: we say that the vertex $u$ is a \textbf{new vertex near $v$}, and that the edge from $v$ to $u$ is a \textbf{new edge near $v$}. For every vertex of $\subd{\varphi}{G}$, it is either an old vertex or a new vertex near a unique $v\in G$. For every edge of $\subd{\varphi}{G}$, it is either an old edge, or a new edge near a unique $v\in G$.

To an $\grafo$ $G$ we associate the subgroup $H\sgr F_n$ given by the image $H=\pi_1(f)(\pi_1(G))$, where $f:G\rar R_n$ is the labeling map. It is immediate to see that, if $G$ is associated with the subgroup $H$, then $\subd{\varphi}{G}$ is associated with the subgroup $\varphi(H)$. This observation yields the following:

\begin{myprop}\label{subdivision}
Let $H\sgr F_n$ be a non-trivial finitely generated subgroup and let $\varphi=(A,a)$ be a Whitehead automorphism. Then $\core{\varphi(H)}=\core{\fold{\subd{\varphi}{\core{H}}}}$.
\end{myprop}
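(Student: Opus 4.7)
The plan is to combine the observation made in the text just before the statement---that $\subd{\varphi}{G}$ represents $\varphi(L)$ whenever $G$ represents $L$---with the structural content of proposition \ref{folding}, and finish by invoking the fact that conjugate subgroups share the same core graph.

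First I would pick a basepoint $v$ of $\core{H}$; the pointed $\grafo$ $(\core{H},v)$ has as associated subgroup some conjugate $H' = gHg^{-1}$ of $H$. Reading the same basepoint $v$ in $\subd{\varphi}{\core{H}}$, the stated observation gives that the associated subgroup of $\subd{\varphi}{\core{H}}$ is $\varphi(H')$, a conjugate of $\varphi(H)$. The observation itself reduces to a direct letter-by-letter check: each subdivided edge, traversed in the direction of its original orientation, spells out exactly $\varphi(y)$ where $y$ was the original label, matching the four cases $y\mapsto y$, $y\mapsto ay$, $y\mapsto y\ol{a}$, $y\mapsto ay\ol{a}$ from the definition of Whitehead automorphism. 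Hence any loop in $\core{H}$ spelling a word $w$ corresponds to a loop in $\subd{\varphi}{\core{H}}$ spelling $\varphi(w)$.

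Next I would apply proposition \ref{folding}: the graph $G' := \fold{\subd{\varphi}{\core{H}}}$ still has associated subgroup $\varphi(H')$, and by part (vi) it embeds as a pointed subgraph of $\cov{\varphi(H')}$ containing $\bcore{\varphi(H')}$. Since $\cov{\varphi(H')}$ is $\bcore{\varphi(H')}$ with a (possibly infinite) tree hanging off each of its vertices, any such intermediate subgraph is just $\bcore{\varphi(H')}$ with some additional tree pieces attached; and $\bcore{\varphi(H')}$ itself is $\core{\varphi(H')}$ with at most one further stub-path attached at the basepoint. Because the core operation discards every tree piece, I obtain
\[
\core{G'} \;=\; \core{\bcore{\varphi(H')}} \;=\; \core{\varphi(H')} \;=\; \core{\varphi(H)},
\]
where the last equality is the standard fact that conjugate subgroups of $F_n$ have the same (unpointed) core graph.

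I expect the main obstacle to be bookkeeping rather than any real mathematical difficulty. The preliminary observation about subdivision deserves the most care---one must confirm that the canonical correspondence between edge-paths in $G$ and edge-paths in $\subd{\varphi}{G}$ realises $\varphi$ at the level of the words read off---but this is essentially built into the definition of $\subd{\varphi}{\cdot}$. Once this is established, the remaining steps (choice of basepoint and resulting conjugation, invocation of proposition \ref{folding}, and the passage to cores that absorbs both the conjugation and the possible stub $\bcore{\varphi(H')}\setminus\core{\varphi(H')}$) are all routine.
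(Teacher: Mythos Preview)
Your proposal is correct and follows essentially the same route as the paper: both use the pre-stated observation that subdivision realises $\varphi$ on the associated subgroup, then invoke proposition~\ref{folding}(vi) to embed the folded graph in the covering and finish by taking cores. Your treatment is in fact slightly more careful than the paper's, which silently writes $f_*(\pi_1(\core{H}))=H$ without mentioning the basepoint or the resulting conjugation; you make this explicit and then absorb it via the fact that conjugate subgroups share the same unpointed core.
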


\begin{proof}
For the $\grafo$ $\core{H}$ with labeling map $f:\core{H}\rar R_n$, we have that $f_*(\pi_1(\core{H}))=H$. After the subdivision operation, if we call $g:\subd{\varphi}{\core{H}}\rar R_n$ the labeling map, we have that $g_*(\pi_1(\subd{\varphi}{\core{H}}))=\varphi(H)$. In particular, proposition \ref{folding} tells us that $\fold{\subd{\varphi}{\core{H}}}$ can be embedded in $\cov{\varphi(H)}$ as a subgraph containing $\core{\varphi(H)}$. It follows that $\core{\fold{\subd{\varphi}{\core{H}}}}=\core{\varphi(H)}$, as desired.
\end{proof}

The following technical result is a generalization of lemma \ref{onlya}.

\begin{mylemma}\label{onlya2}
Let $H\sgr F_n$ be a non-trivial finitely generated subgroup and let $\varphi=(A,a)$ be a Whitehead automorphism. Consider the graph $\subd{\varphi}{\core{H}}$. For every vertex $v\in\core{H}$, fold together all the edges of $\subd{\varphi}{\core{H}}$ going out of $v$ and labeled with $a$. Then, after these folding operations, no further folding operation is possible.

In particular, for every folding sequence starting from $\subd{\varphi}{\core{H}}$, the sequence contains only rank-preserving folding operations involving edges labeled $a$.
\end{mylemma}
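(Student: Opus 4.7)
The plan is to describe the graph $G'$ obtained from $G := \subd{\varphi}{\core{H}}$ by the prescribed foldings, verify by a case check that $G'$ is folded, and then invoke proposition \ref{folding} for the ``in particular'' clause.

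First I would catalogue the edges of $G$ at each of its vertices. From the subdivision rules, each \emph{new vertex} $u$ has exactly two incident edges, with letter set $\{\bar a, c\}$ for some $c \neq a, \bar a$; in particular, no new vertex has an outgoing $a$-edge. At an old vertex $v \in \core{H}$, each $b \in L_{\core{H}}(v) \cap A$ gives rise to a new outgoing $a$-edge from $v$ to a new vertex $u_b$ (uniquely associated with the pair $(v,b)$), while each $b \in L_{\core{H}}(v) \setminus A$ still contributes the letter $b$ at $v$ (on an edge whose far endpoint may have shifted). Hence the outgoing $a$-edges at $v$ are exactly the old $a$-edge at $v$ (if $a \in L_{\core{H}}(v)$), going to the old $a$-successor $v^*$, together with one new $a$-edge for each $b \in L_{\core{H}}(v) \cap A$.

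The key technical step, which I expect to be the main obstacle, is to show that the sets $S_v := \{\text{endpoints of outgoing $a$-edges at $v$ in $G$}\}$ are pairwise disjoint for distinct old vertices $v$. Indeed, a new vertex is associated to a unique old vertex, so no $u_b$ can lie in two different $S_v$'s; and if $v^* = (v')^*$ were a common old endpoint, then $\bar a$ would have multiplicity $\ge 2$ at this vertex of $\core{H}$, contradicting foldedness. This disjointness implies that the vertices of $G'$ split cleanly into singleton classes $\{v\}$ for old $v$ with $\bar a \notin L_{\core{H}}(v)$, together with one merged class $[S_v]$ per old $v$ with $S_v \neq \emptyset$. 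With this decomposition in hand I can enumerate letters at every vertex of $G'$.

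Now I would verify that no letter appears with multiplicity greater than one at any vertex of $G'$. At a singleton $\{v\}$ this is essentially immediate: after folding the outgoing $a$-letter has multiplicity at most one, there is no incoming $a$, and the remaining letters $L_{\core{H}}(v) \setminus (A \cup \{a\})$ are already distinct. At a merged vertex $[S_v]$ I would split the letters into three contributions: the single $\bar a$ from the merged incoming $a$-edges from $v$; the letters $b \in L_{\core{H}}(v) \cap A$ contributed by the various $u_b$'s; and, if $v^*$ is present, the letters $\{a\} \cup (L_{\core{H}}(v^*) \setminus (A \cup \{a, \bar a\}))$ from $v^*$. Since $A$ excludes both $a$ and $\bar a$, these three contributions are pairwise disjoint; internally, distinctness follows because $L_{\core{H}}(v)$ and $L_{\core{H}}(v^*)$ are already sets. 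So no letter repeats at $[S_v]$, and $G'$ is folded.

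For the ``in particular'' clause I would observe that the prescribed foldings form a maximal folding sequence consisting only of $a$-labeled folds; and that each individual fold identifies two currently distinct vertices, since the $S_v$'s remain pairwise disjoint throughout the process, so at the moment we fold inside a given $S_v$ its constituents have not been merged with each other by earlier steps. Hence every fold in the prescribed sequence is rank-preserving. By parts (i)--(iii) of proposition \ref{folding}, every maximal folding sequence starting from $G$ has the same length, the same per-label counts, and the same number of rank-preserving folds, so every fold in any folding sequence starting from $G$ is $a$-labeled and rank-preserving.
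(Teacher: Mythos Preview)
Your proof is correct and follows essentially the same strategy as the paper: describe the quotient $G'$, enumerate the letters at each of its vertices, and check that no letter repeats. The only real difference is bookkeeping: the paper classifies the vertices of $G'$ directly into three types (old $u$ with $\ol a\notin L(u)$; old $u$ with $\ol a\in L(u)$; new vertex $u_1$), whereas you parametrize the merged classes by the \emph{source} vertex $v$ via the sets $S_v$, thereby collapsing the paper's Cases~2 and~3 into a single case. Your explicit verification that the $S_v$ are pairwise disjoint is a point the paper leaves implicit. One minor imprecision: in your letter count at $[S_v]$ you include $a$ unconditionally among the letters contributed by $v^*$, but in fact $a$ appears only when $S_{v^*}\ne\emptyset$; this overcount does not affect the disjointness check, so the conclusion stands.
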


\begin{figure}[h!]
\centering
\begin{tikzpicture}
\node (1) at (2,8) {.};
\node (2) at (1,11) {.};
\node (3) at (3,11) {.};
\node (4) at (5,10) {.};
\node (5) at (5,8) {.};
\node (6) at (5,6) {.};
\draw[->] (2) to node[left]{$x$} (1);
\draw[->] (3) to node[left]{$y$} (1);
\draw[->] (1) to node[above]{$y$} (4);
\draw[->] (5) to node[above]{$z$} (1);
\draw[->] (6) to node[above]{$t$} (1);
\node (7) at (2,3) {.};
\node (8) at (5,4) {.};
\node (9) at (5,2) {.};
\draw[->] (1) to node[left]{$x$} (7);
\draw[->] (7) to node[below]{$y$} (8);
\draw[->] (7) to node[below]{$z$} (9);
\node () at (1.8,7.8) {$v$};
\node () at (1.8,2.8) {$u$};
\node () at (0.5,4.5) {$G=\core{H}$};

\node (1s) at (10,8) {.};
\node (2s) at (9,11) {.};
\node (3s) at (11,11) {.};
\node (4s) at (13,10) {.};
\node (5s) at (13,8) {.};
\node (6s) at (13,6) {.};
\node (10s) at (10.66,10) {.};
\node (11s) at (11,8.66) {.};
\node (12s) at (11,8) {.};
\node (13s) at (11,7.33) {.};
\node (14s) at (12,6.66) {.};
\draw[->] (2s) to node[left]{$x$} (1s);
\draw[->] (3s) to node[left]{$x$} (10s);
\draw[->] (10s) to node[left]{$y$} (1s);
\draw[->] (1s) to node[above]{$x$} (11s);
\draw[->] (11s) to node[above]{$y$} (4s);
\draw[->] (1s) to node[above]{$x$} (12s);
\draw[->] (5s) to node[above]{$z$} (12s);
\draw[->] (1s) to node[above]{$x$} (13s);
\draw[->] (14s) to node[above]{$t$} (13s);
\draw[->] (6s) to node[above]{$x$} (14s);
\node (7s) at (10,3) {.};
\node (8s) at (13,4) {.};
\node (9s) at (13,2) {.};
\node (15s) at (11,3.33) {.};
\node (16s) at (12,2.33) {.};
\draw[->] (1s) to node[left]{$x$} (7s);
\draw[->] (7s) to node[above]{$x$} (15s);
\draw[->] (15s) to node[above]{$y$} (8s);
\draw[->] (7s) to node[above]{$z$} (16s);
\draw[->] (9s) to node[above]{$x$} (16s);
\node () at (9.8,7.8) {$v$};
\node () at (9.8,2.8) {$u$};
\node () at (8.5,4.5) {$\subd{\varphi}{G}$};
%

\node (1u) at (6,-4) {.};
\node (2u) at (5,-1) {.};
\node (3u) at (7,-1) {.};
\node (10u) at (6.66,-2) {.};
\draw[->] (2u) to node[left]{$x$} (1u);
\draw[->] (3u) to node[left]{$x$} (10u);
\draw[->] (10u) to node[left]{$y$} (1u);
\node (7u) at (6,-9) {.};
\node (8u) at (9,-8) {.};
\node (9u) at (9,-10) {.};
\node (15u) at (7,-8.66) {.};
\node (16u) at (8,-9.66) {.};
\draw[->] (1u) to node[left]{$x$} (7u);
\draw[->] (7u) to node[below]{$x$} (15u);
\draw[->] (15u) to node[below]{$y$} (8u);
\draw[->] (7u) to node[below]{$z$} (16u);
\draw[->] (9u) to node[above]{$x$} (16u);
\node (4u) at (9,-3) {.};
\node (5u) at (9,-5) {.};
\node (6u) at (9,-7) {.};
\node (14u) at (8,-7.66) {.};
\draw[->] (7u) to node[left]{$y$} (4u);
\draw[->] (5u) to node[above]{$z$} (7u);
\draw[->] (14u) to node[above]{$t$} (7u);
\draw[->] (6u) to node[above]{$x$} (14u);
\node () at (5.8,-4.2) {$v$};
\node () at (5.8,-9.2) {$u$};
\node () at (4.5,-8.5) {$G'$};
\end{tikzpicture}
\caption{A local picture of the graph $G$ and how it changes during the proof of lemma \ref{onlya} (with a focus on case 2). Here $F_4=\gen{x,y,z,t}$ and $\varphi=(\{y,\ol{z},t,\ol{t}\},x)$ is the same as in figure \ref{subdivisionedges}. We see a portion of the starting graph $G$, its subdivision, and the corresponding portion of $G'$.}\label{GG'}
\end{figure}
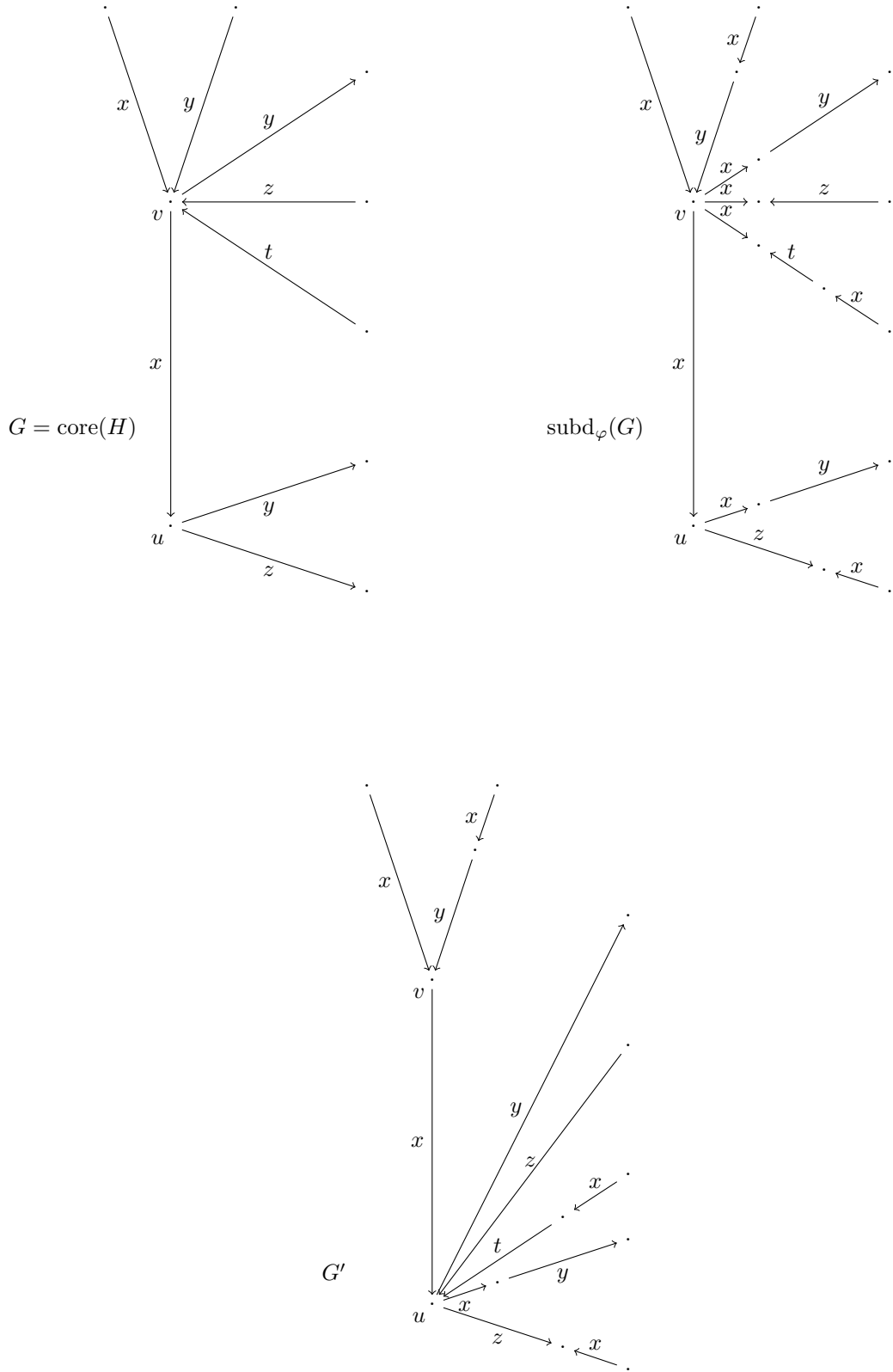

\begin{proof}
Let $G=\core{H}$ and denote with $L(v)$ the set of letters at the vertex $v$ in $G$. For every vertex $v\in G$, take all the edges in $\subd{\varphi}{G}$ labeled $a$ and going out of $v$, and fold them all together; call the resulting graph $G'$. Our aim is to show that no folding operation is possible on $G'$. This is equivalent to showing that, for every vertex $u\in G'$ and for every letter, there is at most one edge with that label going out of $u$.

Suppose we have a vertex $v\in G$ with $L(v)\cap A=\emptyset$. Then no new vertex is created near $v$ in the subdivision operation.

Suppose we have a vertex $v\in G$ with $L(v)\cap A\not=\emptyset$ and $a\in L(v)$: this means that $G$ contains an edge labeled $a$ going from $v$ to $u$. Then every new vertex, which is created near $v$ with the subdivision operation, gets identified with $u$ in $G'$. 

Suppose we have a vertex $v\in G$ with $L(v)\cap A\not=\emptyset$ and $a\not\in L(v)$. Then all the new vertices, which are created near $v$ with the subdivision operation, are identified together into a vertex which we call $v_1\in G'$.

Thus $G'$ contains only two types of vertices: old vertices $u$ obtained from a vertex $u\in G$, and new vertices $u_1$ near a vertex $u\in G$ with $L(u)\cap A\not=\emptyset$ and $a\not\in L(u)$.

CASE 1: Suppose we have a vertex $u\in G$ with $\ol{a}\not\in L(u)$. Since $\ol{a}\not\in L(u)$, we have that the vertex $u\in\subd{\varphi}{G}$ does not get identified with any other vertex during the folding operations that produce $G'$. The edges going out of $u\in G$ with label in $L(u)\cap A^c$ give edges going out of $u\in G'$ with the same label. The edges going out of $u\in G$ with label in $L(u)\cap A$ give edges labeled $a$ going out of $u\in\subd{\varphi}{G}$; all the edges labeled $a$ and going out of $u\in\subd{\varphi}{G}$ are folded together in $G'$, meaning that there is at most one edge labeled $a$ going out of $u\in G'$. Thus in this case the vertex $u\in G'$ has at most one edge with each label going out of it.

CASE 2: Suppose we have a vertex $u\in G$ with $\ol{a}\in L(u)$. This means that $G$ contains an edge labeled $a$ going from $v$ to $u$ (see also figure \ref{GG'}). It is possible that the subdivision operation creates new vertices near $v$; the folding operation identifies all such vertices with $u$. Thus, for every letter $l\in L(v)\cap A$, the vertex $u\in G'$ has one edge labeled $l$ going out of it. The edges going out of $u$ with label in $L(u)\cap A^c$ give edges going out of $u\in G'$ with the same label. Notice that $L(v)\cap A$ and $L(u)\cap A^c$ are disjoint, so we can not get two vertices with the same label in this way. As in case $1$, the edges going out of $u\in G$ with label in $L(u)\cap A$ give edges labeled $a$ going out of $u\in\subd{\varphi}{G}$; all the edges labeled $a$ going out of $u\in\subd{\varphi}{G}$ are folded together in $G'$, meaning that there is at most one edge labeled $a$ going out of $u\in G'$. Thus in this case the vertex $u\in G'$ has at most one edge with each label going out of it.

CASE 3: Suppose we have a vertex $u\in G$ with $L(u)\cap A\not=\emptyset$ and $a\not\in L(u)$. This means that all the vertices which are created near $u$ fold together in a single vertex $u_1\in G'$. Each edge going out of $u\in G$ with label in $L(u)\cap A$ gives one edge going out of $u_1\in G'$ with the same label. The vertex $u_1$ also has one edge labeled $\ol{a}$ going out of it, and notice that $\ol{a}\not\in L(u)\cap A$. It follows that the vertex $u_1\in G'$ has at most one edge with each label going out of it.

Since we examined each vertex of $G'$, we conclude that no folding operation is possible on $G'$. By proposition \ref{folding}, we have that each folding sequence starting from $\subd{\varphi}{G}$ can only contain rank-preserving folding operations involving edges labeled $a$, as desired.
\end{proof}

\subsection*{Whitehead's algorithm for subgroups}

We are now ready to state the analogues of theorems \ref{cutvertex}, \ref{Whitehead} and \ref{finesse} for free factors.

\begin{mythm}\label{cutvertex2}
Let $H\sgr F_n$ be a free factor, and suppose $\core{H}$ has more than one vertex. Then the Whitehead graph of $H$ contains a cut vertex.
\end{mythm}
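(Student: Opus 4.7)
My strategy closely parallels the proof of Theorem \ref{cutvertex}, with the single loop $G(w)$ replaced by the core graph $\core{H}$. By Corollary \ref{freefactoromitambient}, I may assume that every letter $x_1,\ldots,x_n$ appears on some edge of $\core{H}$; otherwise I pass to the free factor of $F_n$ generated by the letters actually used. Conjugate subgroups share the same core graph and are simultaneously free factors, so up to replacing $H$ by a conjugate I may further assume $\bcore{H}=\core{H}$, i.e. the basepoint lies on $\core{H}$.

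Since $H$ is a free factor, I take a basis $h_1,\ldots,h_r$ of $H$ and extend it to a basis $h_1,\ldots,h_r,w_{r+1},\ldots,w_n$ of $F_n$, with each $w_i$ represented by a cyclically reduced word. I form the $\grafo$ $G$ by attaching to the basepoint of $\core{H}$, for each $i>r$, a loop $p_i$ whose edges spell out $w_i$. Under the labeling map, $\pi_1(G)$ surjects onto $\gen{h_1,\ldots,h_r,w_{r+1},\ldots,w_n}=F_n$; since the abstract rank of $\pi_1(G)$ equals $n$, the induced homomorphism is an isomorphism. Hence by Proposition \ref{folding}, any maximal folding sequence $G=G^{(0)}\rar\cdots\rar G^{(l)}$ terminates at $R_n$ and consists entirely of rank-preserving folds.

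Every vertex of $\core{H}$ lies on a reduced cycle in $\core{H}$ by the definition of the core, and the cyclic reducedness of each $w_i$ guarantees the same for the interior vertices of the loops $p_i$. Thus Lemma \ref{novalence1} applies and no valence-$1$ vertex ever appears in the folding sequence. The penultimate graph $G^{(l-1)}$, requiring only one further rank-preserving fold to become $R_n$ and having no valence-$1$ vertex, must therefore have the structure described in Figure \ref{prerosa}: two vertices $v$ and $v'$ with loops and connecting edges, including the pair of $x_1$-edges still to be folded together.

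Composing the inclusion $\core{H}\hrar G$ with the folding maps yields a label-preserving map $f:\core{H}\rar G^{(l-1)}$. Each vertex of $\core{H}$ lands on either $v$ or $v'$, so every arc of the Whitehead graph of $H$ joins two letters at $v$ or two letters at $v'$; the only letter common to both sides is $\ol{x_1}$. Identifying a cut vertex then follows the same case analysis as in the proof of Theorem \ref{cutvertex}: depending on which of the two $x_1$-edges of $G^{(l-1)}$ lie in $f(\core{H})$, one concludes that $\ol{x_1}$ is a cut vertex; in the remaining case, the hypothesis that every letter appears in $\core{H}$ forces some $x_{\alpha+i}$-edge to lie in $f(\core{H})$, and then $x_{\alpha+i}$ is itself a cut vertex. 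The main obstacle is the bookkeeping in this final case analysis and the verification that arranging $\bcore{H}=\core{H}$ is legitimate, but the geometric content is entirely parallel to the single-element case.
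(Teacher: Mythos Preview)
Your proof is correct and follows the paper's argument essentially verbatim: reduce to the case where all letters appear and the basepoint lies in $\core{H}$, extend a basis of $H$ to one of $F_n$, attach the extra generators as loops at the basepoint of $\core{H}$, fold down to $R_n$ by rank-preserving folds, and read off the cut vertex from the structure of $G^{(l-1)}$ exactly as in Theorem~\ref{cutvertex}. One small point: you assert that the complementary basis elements $w_{r+1},\ldots,w_n$ can be taken \emph{cyclically} reduced, which is not immediate (the paper only takes them reduced); since the basepoint already lies on reduced cycles in $\core{H}$, reducedness of the $w_i$ suffices for Lemma~\ref{novalence1}, so this over-specification is harmless but unnecessary.
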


\begin{mythm}\label{Whitehead2}
Let $H\sgr F_n$ be a free factor, and suppose the Whitehead graph of $H$ contains a cut vertex. Then there is a Whitehead automorphism $\varphi$ such that $\core{\varphi(H)}$ has strictly fewer vertices and strictly fewer edges than $\core{H}$.
\end{mythm}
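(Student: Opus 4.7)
The plan is to mirror the word-case argument of Theorems~\ref{Whitehead} and~\ref{finesse} in the graph setting, using the subdivision operation and Lemma~\ref{onlya2} as the main technical tools. Given the cut vertex $a$ of the Whitehead graph of $H$, I would define $A$ in the standard way: if the connected component of $a$ does not contain $\ol a$, let $A$ be that component minus $\{a\}$; otherwise, let $A$ be one of the components of (the connected component of $a$)$\,\setminus\{a\}$ that does not contain $\ol a$. Either way, the Whitehead graph has no arc between $A$ and any letter outside $A\cup\{a\}$. The candidate automorphism is $\varphi=(A,a)$, and by Proposition~\ref{subdivision} we have $\core{\varphi(H)}=\core{G'}$, where by Lemma~\ref{onlya2} the graph $G'$ is obtained from $\subd{\varphi}{\core{H}}$ by folding, at each old vertex $v$, all the $a$-labeled edges emanating from $v$ into a single one.

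The heart of the argument is a local degree computation at each vertex $v$ of $\core{H}$. The clique structure of the Whitehead graph, combined with the separation above, forces $L(v)\subseteq A\cup\{a\}$ at every $v$ with $L(v)\cap A\neq\emptyset$ (in particular $\ol a\notin L(v)$). I will check that such a $v$ has valence $1$ in $G'$: every edge of $\core{H}$ at $v$ has its $v$-side label in $A\cup\{a\}$, so its $v$-side becomes an $a$-labeled edge after subdivision, and the folding collapses all of them into one. Bookkeeping of the subdivisions and folds yields $|V(G')|=|V(\core{H})|+n_3$ and $|E(G')|=|E(\core{H})|+n_3$, where $n_3$ is the number of such vertices with $a\notin L(v)$ (each contributes one new vertex attached by a single new $a$-edge); let $n_4$ denote the analogous count with $a\in L(v)$, each of which contributes nothing to the totals since the new vertices get identified with the existing $a$-neighbour.

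The main obstacle, and the place where the cut-vertex hypothesis does real work, is guaranteeing $n_4\geq 1$. The cut-vertex property provides at least one arc of the Whitehead graph from $a$ to some $l\in A$, and any such arc arises from a vertex $v\in\core{H}$ with $\{a,l\}\subseteq L(v)$, which is precisely a vertex contributing to $n_4$. Both the $n_3$- and $n_4$-type vertices have valence $1$ in $G'$, while their unique neighbours (the new vertex $v_1$ or the old $a$-neighbour $u$ respectively) retain valence $\geq 2$ after removal, using that $\core{H}$ has no valence-$1$ vertex. Hence coring removes exactly $n_3+n_4$ vertices and $n_3+n_4$ edges, giving $|V(\core{\varphi(H)})|=|V(\core{H})|-n_4<|V(\core{H})|$. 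Since the folding of Lemma~\ref{onlya2} is rank-preserving and coring preserves Euler characteristic, the corresponding strict inequality for edges follows automatically.
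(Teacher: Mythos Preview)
Your proof is correct and follows essentially the same route as the paper's: you choose $A$ identically, derive the same trichotomy from the clique structure of the Whitehead graph (your $n_3$ and $n_4$ are the paper's cases (ii) and (iii)), and obtain the strict decrease from the existence of at least one arc between $a$ and $A$. The only difference is presentational: the paper argues case by case that each type-(iii) vertex lowers the vertex count by one, whereas you carry out an explicit global count $|V(G')|=|V(\core{H})|+n_3$, $|E(G')|=|E(\core{H})|+n_3$ and then invoke the Euler-characteristic/rank-preserving observation for the edge inequality; both arguments are equivalent. One small point worth making explicit in your write-up is that the $a$-neighbour $u$ of a type-(iii) vertex $v$ has valence $|L(u)|+|L(v)|-2\ge 2$ after removal (since $u$ absorbs the $|L(v)|-1$ new edge-ends from the fold), which is what actually rules out cascading; your phrase ``using that $\core{H}$ has no valence-$1$ vertex'' gestures at this but does not quite say it.
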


In the next theorem we write $L(v)$ as introduced in definition \ref{Letters}:

\begin{mythm}\label{finesse2}
The automorphism $\varphi=(A,a)$ in theorem \ref{Whitehead2} can be chosen in such a way that, at each vertex $v$ of $\core{H}$, exactly one of the following configurations takes place:

(i) $L(v)\cap A=\emptyset$.

(ii) $L(v)\subseteq A$.

(iii) $a\in L(v)$ and $L(v)\subseteq A\cup\{a\}$.
\end{mythm}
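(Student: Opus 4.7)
The plan is to mirror the argument used to derive Theorems \ref{Whitehead} and \ref{finesse} from Theorem \ref{cutvertex}, adapting it to the Whitehead graph of the subgroup $H$. By Theorem \ref{cutvertex2}, the Whitehead graph of $H$ admits a cut vertex $a$. I then select the set $A$ exactly as in the proof of Theorem \ref{finesse}: if $\ol a$ is not in the same connected component of the Whitehead graph as $a$, let $A$ be the connected component of $a$ with $a$ itself removed; if $\ol a$ \emph{is} in the same component, remove $a$ from its connected component and let $A$ be one of the resulting sub-components not containing $\ol a$. The crucial feature of this choice is that every edge of the Whitehead graph with one endpoint in $A$ has its other endpoint in $A\cup\{a\}$.

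To establish the finesse at a vertex $v$ of $\core{H}$, recall that every pair of distinct letters $y,z\in L(v)$ is joined by an edge in the Whitehead graph of $H$. If $L(v)\cap A=\emptyset$, we are in configuration (i). Otherwise, pick any $y\in L(v)\cap A$: for every other $z\in L(v)$, the edge $y\text{--}z$ in the Whitehead graph forces $z\in A\cup\{a\}$ by the cut property above. Hence $L(v)\subseteq A\cup\{a\}$, and we are in configuration (ii) when $a\notin L(v)$ and in (iii) when $a\in L(v)$. Mutual exclusivity follows because in $\core{H}$ every vertex has valence at least $2$, so $|L(v)|\ge 2$.

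It remains to verify that this choice of $\varphi=(A,a)$ meets the conclusion of Theorem \ref{Whitehead2}, namely that $\core{\varphi(H)}$ has strictly fewer vertices and edges than $\core{H}$. Using Proposition \ref{subdivision} I would compute $\core{\varphi(H)}$ as the core of $\fold{\subd{\varphi}{\core{H}}}$, and by Lemma \ref{onlya2} the folding sequence consists only of rank-preserving folds of $a$-labeled edges at old vertices. Analysing the folds via the three configurations: case (i) vertices are untouched by the subdivision; in case (ii) all new $a$-edges near $v$ fold together into a common new vertex near $v$; in case (iii) every new $a$-edge near $v$ folds onto the preexisting $a$-edge at $v$, collapsing the newly created vertices back onto old ones. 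The cut-vertex property guarantees at least one edge in the Whitehead graph from $A$ to $a$, which forces at least one genuine identification of originally distinct vertices during the folding, producing the required strict decrease in both counts.

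The principal obstacle is this last step of careful combinatorial bookkeeping -- making precise exactly how the vertex and edge count evolves through subdivision, folding, and the final restriction to the core, and in particular verifying that the strict inequalities truly hold. A somewhat subtle point is the behaviour at vertices with $\ol a\in L(v)$, which are automatically in case (i) and hence unchanged by subdivision, but still play a role during the folding at neighbouring vertices.
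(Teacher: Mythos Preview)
Your approach is essentially the paper's own: same choice of cut vertex $a$, same construction of $A$, same clique argument giving the trichotomy at each vertex of $\core{H}$. The one place to sharpen is your final paragraph on the strict decrease. The drop in the vertex and edge count does \emph{not} come from ``a genuine identification of originally distinct vertices during the folding'' --- all the folds of Lemma \ref{onlya2} are rank-preserving and merely re-absorb the extra vertices added by the subdivision. The paper's bookkeeping is: in case (i) nothing happens near $v$; in case (ii) the new vertices near $v$ fold into a single new vertex $v_1$, after which $v$ has valence $1$ and is removed from the core, so the net change at $v$ is zero; in case (iii) the new vertices near $v$ fold onto the old vertex $u$ at the far end of the $a$-edge, after which $v$ again has valence $1$ and is removed, giving a net loss of one vertex and one edge. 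The edge from $a$ to $A$ in the Whitehead graph guarantees that case (iii) occurs at least once, and that is what yields the strict inequality.
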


\begin{myrmk}
Case (i) means that we do not act on any of the letters at $v$. Case (ii) means that we act on all the letters at $v$. Case (iii) means that we act on all the letters at $v$ except for $a$.
\end{myrmk}

\begin{myrmk}
Notice that, if $\ol{a}\in L(v)$, then $v$ necessarily falls into case (i).
\end{myrmk}

The proof of theorem \ref{cutvertex2} is analogous to the proof of theorem \ref{cutvertex}.

\begin{proof}[Proof of theorem \ref{cutvertex2}]
Let $H$ be a free factor such that $\core{H}$ has more than one vertex. Up to conjugation, we can assume that the basepoint belongs to $\core{H}$. We also assume that $\core{H}$ contains each letter $x_1,...,x_n$ at least once; otherwise, if $\core{H}$ only contains the letters $x_1,...,x_k$, then we can just apply the same argument in the free factor $\gen{x_1,...,x_k}\sgr\gen{x_1,...,x_n}=F_n$ (using corollary \ref{freefactoromitambient}).

Since $H$ is a free factor, we can take a basis for $H$ and add reduced words $w_1,...,w_r$ in order to make it a basis for $F_n$. Take the graph $\core{H}$ and add $r$ paths from the basepoint to itself, corresponding to the words $w_1,...,w_r$, in order to get a graph $G$. Then, apply a sequence of folding operations $G\rar G'\rar...\rar G^{(l)}$ until no further folding operation is possible, as in proposition \ref{folding}. Since $\gen{H,w_1,...,w_r}=F_n$, we must have that $G^{(l)}=R_n$ is the standard $n$-rose. Using lemma \ref{novalence1}, we can see that no graph in the sequence contains any valence-$1$ vertex. Also, since $\pi_1(G)$ has the same rank as $\pi_1(R_n)$, we must have that each folding operation is rank-preserving.

Thus $G^{(l-1)}$ has no valence-$1$ vertex, and produces the standard $n$-rose with just one rank-preserving folding operation. It is easy to see that $G^{(l-1)}$ has to be of the form described in figure \ref{prerosa}, for some $1\le\alpha\le\beta\le n$ with $\alpha<n$ (up to permutation of the letters, and up to substitution of some letter with its inverse) (and the two edges labeled $x_1$ are the ones to be folded in order to obtain the $n$-rose).

We have a map of graphs $f:\core{H}\rar G^{(l-1)}$ which preserves orientations and labels of edges. The image of $f(\core{H})\subseteq G^{(l-1)}$ contains each letter at least once, meaning that it has to cross at least one of the edges connecting $v$ to $v'$ (see figure \ref{prerosa}). If it crosses the edge labeled $x_1$, then $\ol{x_1}$ is a cut vertex for the Whitehead graph of $H$. If it does not cross the edge labeled $x_1$, then it has to cross the edge $x_{\alpha+i}$ (for some $1\le i\le\beta$), and thus any of $x_{\alpha+i},\ol{x_{\alpha+i}}$ is a cut vertex for the Whitehead graph of $H$.
\end{proof}

Theorems \ref{Whitehead2} and \ref{finesse2} are a consequence of theorem \ref{cutvertex2}.

\begin{proof}[Proof of theorems \ref{Whitehead2} and \ref{finesse2}]
Let $a$ be a cut vertex in the Whitehead graph of $H$.

If the connected component of $a$ does not contain $\ol{a}$, then we take the set $A$ to be that connected component (excluding $a$ itself). Otherwise, take the connected component of $a$ and remove $a$ itself: we remain with at least two nonempty connected components, and at least one of these components does not contain $\ol a$; take $A$ to be such a component. We consider the Whitehead automorphism $\varphi=(A,a)$.

Take a vertex $v$ in $\core{H}$, and notice that the letters in $L(v)$ are vertices of a complete subgraph of the Whitehead graph of $H$. Thus $L(v)$ has to be contained either in $A\cup\{a\}$ or in $A^c$. This yields the tricothomy of theorem \ref{finesse2}.

We now examine more in detail what happens in each of the three cases. The folding takes place according to lemma \ref{onlya2}. For each vertex $v$ of $\core{H}$, we look at the vertices which are created near $v$ in $\subd{\varphi}{\core{H}}$.

Case (i): $L(v)\subseteq A^c$. This means no new vertex is created near $v$. The total number of vertices remains unchanged.

Case (ii): $L(v)\subseteq A$. This means that, for every edge with endpoint $v$, a new vertex is created near $v$. All these new vertices are then folded together into a vertex $v_1$. The vertex $v$ becomes a valence-$1$ vertex, and can thus be removed from the graph. Thus we lose the vertex $v$ and we gain the vertex $v_1$ in the core graph: the total number of vertices is unchanged.

Case (iii): $a\in L(v)$ and $L(v)\subseteq A\cup\{a\}$. This means that $\core{H}$ contains an edge $e$ labeled $a$ going from $v$ to $u$. For every other edge with endpoint $v$, a new vertex is created near $v$. All these new vertices are then folded together with the vertex $u$. The vertex $v$ becomes a valence-$1$ vertex, and can thus be removed from the graph. The total number of vertices decreases by $1$.

In each of the cases (i), (ii) and (iii), the number of vertices and edges of the core graph does not increase. Also, since the Whitehead graph contains at least an edge between $a$ and $A$, we have that case (iii) happens at least once, giving a strict decrease in the number of vertices and edges. This yields theorem \ref{Whitehead2}.
\end{proof}

\begin{myrmk}
We notice that, if $\core{H}$ has rank $r$, the number of edges of $\core{H}$ is the number of vertices plus $r$. The same holds for $\core{\varphi(H)}$, which has rank $r$ too. Thus the decrease in the number of vertices is the same as the decrease in the number of edges.
\end{myrmk}

\begin{figure}[h!]
\centering
\begin{tikzpicture}
\node (1) at (3,3) {.};
\node (2) at (3,1) {.};
\node (3) at (1,1) {.};
\node (4) at (1,3) {.};
\draw[->] (1) to node[left]{$x$} (2);
\draw[->] (2) to node[above]{$x$} (3);
\draw[->] (4) to node[left]{$y$} (3);
\draw[->] (1) to node[above]{$t$} (4);
\node (5) at (5,1) {.};
\node (6) at (7,3) {.};
\node (7) at (5,3) {.};
\draw[->] (5) to node[above]{$y$} (2);
\draw[->] (5) to node[right]{$x$} (6);
\draw[->] (6) to node[above]{$z$} (7);
\draw[->] (7) to node[above]{$t$} (1);
\node () at (6.5,1) {$G=\core{H}$};

\node (1a) at (12,3) {.};
\node (2a) at (12,1) {.};
\node (3a) at (10,1) {.};
\node (4a) at (10,3) {.};
\node (8a) at (10,2.5) {.};
\node (9a) at (11.5,3) {.};
\node (10a) at (10.5,3) {.};
\draw[->] (1a) to node[left]{$x$} (2a);
\draw[->] (2a) to node[above]{$x$} (3a);
\draw[->] (4a) to node[left]{$x$} (8a);
\draw[->] (8a) to node[left]{$y$} (3a);
\draw[->] (1a) to node[above]{$x$} (9a);
\draw[->] (9a) to node[above]{$t$} (10a);
\draw[->] (4a) to node[above]{$x$} (10a);
\node (5a) at (14,1) {.};
\node (6a) at (16,3) {.};
\node (7a) at (14,3) {.};
\node (11a) at (13.5,1) {.};
\node (12a) at (14.5,3) {.};
\node (13a) at (13.5,3) {.};
\node (14a) at (12.5,3) {.};
\draw[->] (5a) to node[above]{$x$} (11a);
\draw[->] (11a) to node[above]{$y$} (2a);
\draw[->] (5a) to node[right]{$x$} (6a);
\draw[->] (6a) to node[above]{$z$} (12a);
\draw[->] (7a) to node[above]{$x$} (12a);
\draw[->] (7a) to node[above]{$x$} (13a);
\draw[->] (13a) to node[above]{$t$} (14a);
\draw[->] (1a) to node[above]{$x$} (14a);
\node () at (15.5,1) {$\subd{\varphi}{G}$};

\node (1b) at (3,-1) {.};
\node (2b) at (3,-3) {.};
\node (3b) at (1,-3) {.};
\node (4b) at (1,-0.5) {.};
\node (8b) at (1,-1) {.};
\draw[->] (1b) to node[left]{$x$} (2b);
\draw[->] (2b) to node[above]{$x$} (3b);
\draw[->] (4b) to node[left]{$x$} (8b);
\draw[->] (8b) to node[left]{$y$} (3b);
\draw[->] (2b) to node[above]{$t$} (8b);
\node (5b) at (5,-3) {.};
\node (6b) at (7,-1) {.};
\node (7b) at (5,-0.5) {.};
\node (12b) at (5,-1) {.};
\draw[->] (6b) to node[above]{$y$} (2b);
\draw[->] (5b) to node[right]{$x$} (6b);
\draw[->] (6b) to node[above]{$z$} (12b);
\draw[->] (7b) to node[left]{$x$} (12b);
\draw[->] (12b) to node[above]{$t$} (2b);
\node () at (7,-3) {$\fold{\subd{\varphi}{G}}$};

\node (2c) at (12,-3) {.};
\node (3c) at (10,-3) {.};
\node (8c) at (10,-1) {.};
\draw[->] (2c) to node[above]{$x$} (3c);
\draw[->] (8c) to node[left]{$y$} (3c);
\draw[->] (2c) to node[above]{$t$} (8c);
\node (6c) at (16,-1) {.};
\node (12c) at (14,-1) {.};
\draw[->] (6c) to node[above]{$y$} (2c);
\draw[->] (6c) to node[above]{$z$} (12c);
\draw[->] (12c) to node[above]{$t$} (2c);
\node () at (15,-3) {$\core{\fold{\subd{\varphi}{G}}}$};
\end{tikzpicture}
\caption{Here $F_4=\gen{x,y,z,t}$ and we consider the free factor $H=\gen{ty\ol{x}^2,x\ol{y}xzt}$. The Whitehead transformation $\varphi=(\{y,\ol{z},t,\ol{t}\},x)$ satisfies the tricothomy of theorem \ref{finesse2} for the graph $\core{H}$. In the figure we start with $\core{H}$, we subdivide it, we fold the result and we remove the valence-$1$ vertices: the result is $\core{\varphi(H)}$. \\ We observe that $\core{\varphi(H)}$ can be obtained from $\core{H}$ in the following way: take the vertices of $\core{H}$ which fall in case (iii) of the tricothomy, and collapse to a point the $x$-edges at those vertices; theorem \ref{quotient} makes this formal.}\label{Whiteheadexample}
\end{figure}

\subsection*{The quotient map}

In the following, we use the notation $L(v)$ as introduced in definition \ref{Letters}.

\begin{mydef}\label{fine}
Let $H\sgr F_n$ be a finitely generated non-trivial subgroup and let $\varphi=(A,a)$ be a Whitehead automorphism. We say that the action of $\varphi$ on $H$ is \textbf{\fine} if for each vertex $v\in\core{H}$, exactly one of the following configurations takes place:

(i) $L(v)\cap A=\emptyset$.

(ii) $L(v)\subseteq A$.

(iii) $a\in L(v)$ and $L(v)\subseteq A\cup\{a\}$.
\end{mydef}

\begin{myrmk}
This is exactly the property given by the trichotomy of theorem \ref{finesse2}.
\end{myrmk}

Suppose now the action of $\varphi$ on $H$ is $\fine$. Let $v$ be a vertex of $\core{H}$ that falls in case (iii) of the trichotomy: since $a\in L(v)$, there is a unique edge labeled $a$ going out of $v$. For each vertex $v$ of $\core{H}$ that falls in case (iii), collapse that $a$-edge to a single point. We obtain a quotient $\grafo$ $Q$ together with a quotient map $q:\core{H}\rar Q$.

\begin{mythm}\label{quotient}
There is an isomorphism $\theta:Q\rar\core{\varphi(H)}$ of graphs sending each edge to an edge with the same label and orientation.
\end{mythm}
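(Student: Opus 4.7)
The plan is to invoke Proposition \ref{subdivision} to identify $\core{\varphi(H)}$ with $\core{\fold{\subd{\varphi}{\core{H}}}}$, and then to use Lemma \ref{onlya2} to describe the folded graph explicitly: a maximal folding sequence consists exactly of folding together, at each old vertex $v\in\core{H}$, all the $a$-edges going out of $v$, and after these folds no further folding is possible.

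With this description in hand, I would analyze each old vertex $v$ of $\core{H}$ using the trichotomy of Definition \ref{fine}. If $v$ falls in case (i), subdivision creates no new $a$-edges at $v$, so $v$ is unaffected. If $v$ falls in case (ii), subdivision creates, for each $y\in L(v)$, one new vertex $w_y$ and one new $a$-edge $v\rar w_y$; folding merges all the $w_y$ into a single vertex $v_1$, after which $v$ retains only a pendant $a$-edge to $v_1$, so $v$ has valence $1$ and disappears when passing to the core. If $v$ falls in case (iii), the original $a$-edge $v\rar u$ gets folded together with the new $a$-edges created at $v$, so $u$ is identified with every new vertex near $v$ into one vertex $u'$; again $v$ is left with only a pendant $a$-edge and is removed in the core.

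I would then define $\theta:Q\rar\core{\varphi(H)}$ on vertices by sending a case-(i) vertex $v$ of $\core{H}$ to itself, a case-(ii) vertex $v$ (uncollapsed in $Q$) to the new vertex $v_1$, and the collapsed class arising from a case-(iii) $v$ to the corresponding merged vertex $u'$. On edges, non-$a$-edges of $\core{H}$ correspond bijectively to non-$a$-edges of $\core{\varphi(H)}$ via the subdivision-then-folding process (subdivision only inserts $a$-edges, and $a$-folding never alters non-$a$-labels), while those $a$-edges of $\core{H}$ whose source is case (i) --- which forces the target into case (i) too, since $\ol{a}\in L(u)$ puts $u$ in case (i) --- are exactly the $a$-edges that survive in both $Q$ and $\core{\varphi(H)}$.

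The main obstacle is verifying bijectivity and well-definedness of $\theta$: one has to check that distinct case-(ii) new vertices and distinct case-(iii) merged vertices are not accidentally identified further in $\core{\varphi(H)}$, and that no collision occurs between vertices of different types. This is precisely what Lemma \ref{onlya2} guarantees, since it states that the folding terminates as soon as the prescribed $a$-edge folds at each old vertex have been performed, so no further identifications are imposed beyond those already recorded in the construction of $Q$. Label and orientation preservation are immediate from the definitions.
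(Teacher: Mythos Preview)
Your proposal is correct and follows essentially the same approach as the paper: both analyze the three cases of the trichotomy at each vertex of $\core{H}$, showing that subdivision followed by the $a$-folds of Lemma \ref{onlya2} and removal of pendant vertices has exactly the effect of collapsing the $a$-edge at each case-(iii) vertex. Your treatment is in fact slightly more explicit than the paper's in two respects: you invoke Lemma \ref{onlya2} by name to rule out further identifications (the paper uses this implicitly), and you spell out the bijection on edges, noting that the surviving $a$-edges are precisely those with case-(i) source.
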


Before proving the above theorem, we need to introduce another map first. According to proposition \ref{subdivision}, we have $\core{\varphi(H)}=\core{\fold{\subd{\varphi}{\core{H}}}}$. Consider the map $r_1:\core{H}\rar\subd{\varphi}{\core{H}}$ which sends each edge $e$ of $\core{H}$ to the edge-path $\subd{\varphi}{e}$. Consider also the map $r_2:\subd{\varphi}{\core{H}}\rar\fold{\subd{\varphi}{\core{H}}}$ which is given by the quotient map induced by the folding operations. Consider finally the map $r_3:\fold{\subd{\varphi}{\core{H}}}\rar\core{\fold{\subd{\varphi}{\core{H}}}}$ given by the retraction which collapses each edge with a valence-$1$ endpoint to the other endpoint. The composition of these three maps gives a map $r=r_3\circ r_2\circ r_1:\core{H}\rar\core{\varphi(H)}$.

\begin{mythm}\label{quotientmap}
The isomorphism $\theta:Q\rar\core{\varphi(H)}$ in theorem \ref{quotient} can be chosen in such a way that:

(i) For each vertex $v$ of $\core{H}$ we have $r(v)=(\theta\circ q)(v)$.

(ii) For each edge $e$ of $\core{H}$, the map $\restr{r}{e}$ is a (weakly monotone) reparametrization of $\restr{\theta\circ q}{e}$.

In particular, $r$ and $\theta\circ q$ are homotopic relative to the $0$-skeleton of $\core{H}$.
\end{mythm}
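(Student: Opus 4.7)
The plan is to construct $\theta$ explicitly by reading off, for each vertex and edge of $\core{H}$, what happens under $r=r_3\circ r_2\circ r_1$. The trichotomy of theorem \ref{finesse2} and the analysis in its proof already tell me what $r$ does on vertices: a case-(i) vertex $v$ is untouched, so $r(v)=v$; a case-(ii) vertex $v$ sees every incident edge acquire a new $a$-edge at $v$ which all fold together, making $v$ valence-$1$ and retracting it onto the resulting new vertex $v_1$; and a case-(iii) vertex $v$ has its pre-existing $a$-edge $v\to u$ absorb all the new $a$-edges at $v$, after which retraction sends $v$ to $u$. Moreover, the $u$ above is always a case-(i) vertex: since $\ol{a}\in L(u)$, case (ii) or (iii) would force $\ol a\in A\cup\{a\}$, which is impossible.

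Armed with this, I would define $\theta$ on the vertices of $Q$ by sending the class of a case-(i) vertex $v$ to $v$, of a case-(ii) vertex $v$ to $v_1$, and of a case-(iii) vertex $v$ (which is identified in $Q$ with its case-(i) partner $u$ after collapsing the $a$-edge) to $u$; the observation above makes this well-defined and consistent. For edges, lemma \ref{onlya2} together with the valence analysis in the proof of theorem \ref{Whitehead2} shows that the surviving edges of $\core{\varphi(H)}$ are precisely the ``old'' pieces carrying the original label of some edge of $\core{H}$ that is not a case-(iii) $a$-edge, and conversely every such old piece survives. I would extend $\theta$ to edges by matching each edge of $Q$ to this corresponding old piece; this respects labels and orientations, and a routine check of endpoints and bijectivity confirms that $\theta$ is a graph isomorphism.

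Finally, I would verify (i) and (ii) by tracing a single edge $e$ of $\core{H}$ through the three maps. Parametrizing $e$ by $[0,1]$, the subdivision $r_1(e)$ breaks $[0,1]$ into at most three sub-intervals: a possible initial $a$-piece (present exactly when the starting endpoint $v$ is case (ii) or (iii) with the first letter of $e$ in $A$), an old middle piece labeled by the original letter $y$, and a symmetric possible terminal $a$-piece at $w$. The folding $r_2$ merges each such $a$-piece with the other $a$-pieces at that old vertex into a single $a$-edge, which $r_3$ then collapses since the old vertex has become valence-$1$; thus the $a$-pieces of $e$ are crushed to the points $r(v)$ and $r(w)$ respectively, while the $y$-piece maps homeomorphically onto the edge $\theta(q(e))$ of $\core{\varphi(H)}$. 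If $e$ itself is a case-(iii) $a$-edge, then both $q(e)$ and $r(e)$ are the single point $u$. In every scenario, $r|_e$ is exactly a weakly monotone reparametrization of $(\theta\circ q)|_e$, which gives (i) and (ii) and hence the homotopy rel the $0$-skeleton. The main obstacle is purely bookkeeping: enumerating the three vertex cases and the four subdivision cases, and verifying that all incidences match up.
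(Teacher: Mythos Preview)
Your proposal is correct and follows essentially the same route as the paper: both proofs proceed by the trichotomy case analysis on vertices of $\core{H}$, track what $r=r_3\circ r_2\circ r_1$ does to each vertex (fixed in case (i), replaced by the new folded vertex $v_1$ in case (ii), retracted onto the neighbour $u$ along the $a$-edge in case (iii)), and then trace each edge through subdivision/folding/retraction to see that the surviving ``old'' piece maps onto $\theta(q(e))$ while any added $a$-pieces collapse. Your write-up is a bit more explicit---you spell out why $u$ must fall in case (i) and enumerate the endpoint configurations---whereas the paper dispatches several sub-cases with ``completely analogous'', but the argument is the same.
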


See also figure \ref{triangleofmaps}.

\begin{proof}[Proof of theorems \ref{quotient} and \ref{quotientmap}]
We examine cases (i), (ii), (iii) of the trichotomy of definition \ref{fine}.

Let $v$ in $\core{H}$ be a vertex which falls into case (i). Then we have that no new vertex is created near $v$, and the core graph remains unchanged.

Let $v$ in $\core{H}$ be a vertex which falls into case (ii). Then we have that, for each edge at $v$, a new vertex is created near $v$. All these new vertices fold together into a new vertex $v_1$, and $v$ becomes a valence-$1$ vertex and is thus removed from the core graph. The vertex $v_1$ takes the place of the vertex $v$, and the core graph doesn't change.

Let $v$ in $\core{H}$ be a vertex which falls into case (iii). We consider the unique edge $e$ labeled $a$ and going from $v$ to another vertex $u$. For every other edge at $v$, we have that a new vertex is created near $v$. All these new vertices are then folded together and with $u$, and the vertex $v$ becomes a valence-$1$ vertex, and is thus removed from the core graph. The effect on the core graph is exactly the same as collapsing the edge $e$ to a single point.

This shows that the quotient graph $Q$ is isomorphic to $\core{\varphi(H)}$, yielding theorem \ref{quotient}.

Take now an edge $e$ of $\core{H}$ with endpoints $u,v$. Notice that, if $e$ gets collapsed by the quotient map $q$, then it is collapsed by the map $r$ too, and the thesis holds; so assume this is not the case. If both $u$ and $v$ fall into case (i) of the trichotomy, then the quotient map $q$ and the map $r$ send $e$ homeomorphically onto the same edge $q(e)=r(e)$ of $\core{\varphi(H)}$. If $u$ falls into case (i) but $v$ falls into case (ii) or (iii), then $q$ sends $e$ homeomorphically onto an edge $q(e)$ of $\core{\varphi(H)}$. The map $r_1$ maps $e$ to an edge path containing two edges, and the map $r_3$ collapses one of those two edges to a point, and sends the other homeomorphically onto $q(e)$. This yields the conclusion for the edge $e$. The case where $u$ falls into case (ii) or (iii) is completely analogous.
\end{proof}

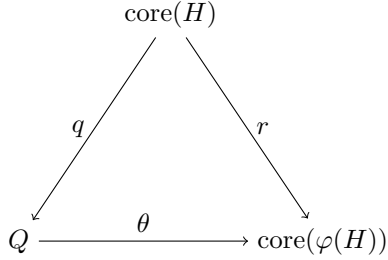
\begin{figure}[h!]
\centering
\begin{tikzpicture}
\node (1) at (0,3) {$\core{H}$};
\node (2) at (-2,0) {$Q$};
\node (3) at (2,0) {$\core{\varphi(H)}$};
\draw[->] (1) to node[left]{$q$} (2);
\draw[->] (1) to node[right]{$r$} (3);
\draw[->] (2) to node[above]{$\theta$} (3);
\end{tikzpicture}
\caption{The maps $q,r$ and $\theta$.}\label{triangleofmaps}
\end{figure}

For brevity, in the following we write $\ol{q}=\theta\circ q:\core{H}\rar\core{\varphi(H)}$. The above theorems \ref{quotient} and \ref{quotientmap} have several interesting consequences.

\begin{mylemma}\label{precise}
Let $H\sgr F_n$ be a non-trivial finitely generated subgroup, and let $\varphi=(A,a)$ be a Whitehead automorphism such that the action of $\varphi$ on $H$ is $\fine$. If case (iii) takes places for exactly $p\ge1$ vertices $v\in\core{H}$, then $\core{\varphi(H)}$ has exactly $p$ fewer vertices and $p$ fewer edges than $\core{H}$. If case (iii) never happens, then $\core{\varphi(H)}=\core{H}$ and the restriction of $\varphi$ to $H$ is conjugation by some element $u\in F_n$.
\end{mylemma}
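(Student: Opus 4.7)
The plan is to read the lemma off Theorem \ref{quotient}, which exhibits $\core{\varphi(H)}$ as the quotient of $\core{H}$ obtained by collapsing, at each case-(iii) vertex $v$, the unique $a$-labelled edge $e_v$ going out of $v$ (existence and uniqueness follow from $a\in L(v)$ and the fact that $\core{H}$ is folded).

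For the statement about the counts, my plan is to prove that the $p$ edges $\{e_v:v\text{ case-(iii)}\}$ are pairwise disjoint, so that collapsing them costs exactly $p$ vertices and $p$ edges. To see disjointness, denote by $u_v$ the other endpoint of $e_v$: since $\bar a\in L(u_v)$, the Remark following Theorem \ref{finesse2} forces $u_v$ into case (i), so $u_v$ is never a case-(iii) vertex, and the sets of starting- and ending-points of the $e_v$'s cannot overlap. If $v\ne v'$ were two case-(iii) vertices with $u_v=u_{v'}$, then $\core{H}$ would contain two distinct $a$-edges into a common vertex, contradicting the folded-ness of $\core{H}$. Hence the $2p$ endpoints are all distinct; collapsing these $p$ pairwise disjoint edges therefore reduces both the vertex count and the edge count by exactly $p$.

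For the statement about conjugation, no edges are collapsed, so $\core{\varphi(H)}\cong\core{H}$ as labelled graphs. I would first treat the case in which the basepoint $v_0$ of $\bcore{H}$ lies in $\core{H}$, via a direct letter-by-letter computation. Any $h=b_1\cdots b_k\in H$ traces a loop in $\bcore{H}$ through vertices $v_0,v_1,\dots,v_k=v_0$, all lying in $\core{H}$ and hence falling into case (i) or (ii). Since $\varphi(b_i)=a^{[b_i\in A]}\,b_i\,\bar a^{[\bar b_i\in A]}$, the block of letters appearing in $\varphi(h)$ between $b_i$ and $b_{i+1}$ is $\bar a^{[\bar b_i\in A]}a^{[b_{i+1}\in A]}$; the trichotomy at $v_i$ forces both Iverson brackets to be $0$ (case (i)) or both $1$ (case (ii)), so every such middle block is either empty or reduces to the empty word. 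After free reduction we are left with $\varphi(h)=a^{[b_1\in A]}\,h\,\bar a^{[\bar b_k\in A]}$; since $\bar a\notin A$, no further reduction with the first or last letter of $h$ is possible. Looking at $v_0=v_k$, both exponents equal $0$ (case (i)) or both equal $1$ (case (ii)), yielding $\varphi(h)=h$ or $\varphi(h)=aha^{-1}$; in either case $\varphi|_H=\gamma_{u_0}$ with $u_0\in\{1,a\}$.

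For the general case I would pass to $H_{\core}=\pi_1(\core{H},v_0^{\core})\sgr F_n$, where $v_0^{\core}$ is the entry vertex of the path $\sigma$ connecting the basepoint to $\core{H}$. If $s\in F_n$ is the word read along $\sigma$, then $H=sH_{\core}s^{-1}$, and fineness for $H$ is the same condition as fineness for $H_{\core}$ (they share the same core graph). Applying the previous paragraph to $H_{\core}$ gives $\varphi|_{H_{\core}}=\gamma_{u_0}$, and substituting $h=sh_0 s^{-1}$ yields $\varphi(h)=\varphi(s)u_0 s^{-1}\,h\,s u_0^{-1}\varphi(s)^{-1}$, so $\varphi|_H=\gamma_u$ with $u=\varphi(s)u_0 s^{-1}\in F_n$. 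The only delicate step throughout is the cancellation analysis in the central letter-by-letter computation, but this is forced by the trichotomy together with $a,\bar a\notin A$.
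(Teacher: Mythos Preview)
Your proof is correct and follows essentially the same route as the paper: both read the lemma off Theorem~\ref{quotient}, and both handle the conjugation statement by decomposing $w=tw'\ol t$ along the path $\sigma$ from the basepoint into $\core{H}$ and showing that $\varphi(w')$ equals $w'$ or $aw'\ol a$ according to whether the entry vertex is in case~(i) or case~(ii). The only minor differences are expository: for the count you explicitly verify that the $p$ collapsed $a$-edges are pairwise disjoint (the paper leaves this implicit, since the Euler characteristic of a core graph forces the vertex and edge drops to be equal), and for the conjugation you do a direct letter-by-letter computation rather than invoking the map $\ol q$ of Theorem~\ref{quotientmap}---both approaches yield the same conjugating element $\varphi(t)u_0\ol t$ with $u_0\in\{1,a\}$.
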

\begin{proof}
The map $\ol{q}$ collapses exactly one edge for each vertex of $\core{H}$ falling in case (iii). This proves the first part of the proposition.

For the second part, suppose that case (iii) never happens for a vertex of $\core{H}$. This means that the map $\ol{q}:\core{H}\rar\core{\varphi(H)}$ is an isomorphism of $\grafos$.

Consider the pointed core graph $\bcore{H}$ and let $\sigma$ be the (possibly trivial) shortest path from the basepoint to a vertex $v$ of $\core{H}$; moreover, call $t$ the word that you read while going along $\sigma$, from the basepoint to $v$.

Take an element $w\in H$ and think of the corresponding (reduced) path $\alpha$ in $\bcore{H}$ from the basepoint to itself. The path $\alpha$ consists of $\sigma$ followed by $\beta$ followed by the reverse of $\sigma$, for some (reduced) path $\beta$ in $\core{H}$ from $v$ to itself. This gives a decomposition $w=tw'\ol{t}$, where $w'$ is the word that we read while going along $\beta$.

Notice that $\ol{q}$ sends $\beta$ isomorphically onto $\ol{q}(\beta)$, preserving labels and orientation on the edges. If $v$ falls into case (i) of the trichotomy, then this gives $\varphi(w')=w'$, meaning that $\varphi$ acts on $H$ as the conjugation by $\varphi(t)\ol{t}$. If $v$ falls into case (ii) or (iii) of the trichotomy, then this gives $\varphi(w')=aw'\ol{a}$, meaning that $\varphi$ acts on $H$ as the conjugation by $\varphi(t)a\ol{t}$.
\end{proof}

We now show that the trichotomy of definition \ref{fine} has a nice behaviour when we pass to subgroups.

\begin{mylemma}\label{finesubgroups}
Let $K\sgr H\sgr F_n$ be non-trivial finitely generated subgroups, and let $\varphi=(A,a)$ be a Whitehead automorphism. If the action of $\varphi$ on $H$ is $\fine$, then the action of $\varphi$ on $K$ is $\fine$.
\end{mylemma}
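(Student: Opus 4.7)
The plan is to construct a natural label-preserving map $\psi:\core{K}\rar\core{H}$ such that $L(v)\subseteq L(\psi(v))$ for every vertex $v\in\core{K}$, and then deduce the trichotomy at $v$ from the one at $\psi(v)$.

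First I would produce the map $\psi$ by covering-space theory. Since $K\sgr H$, the pointed covering space $\cov{K}\rar R_n$ factors through $\cov{H}\rar R_n$, yielding a label-preserving covering map $\pi:\cov{K}\rar\cov{H}$; this respects the labelings because both covers are built over $R_n$ and the factorization is unique. Covering maps are local homeomorphisms, so they preserve local injectivity. In particular, if $\gamma:S^1\rar\cov{K}$ is a reduced cycle, then $\pi\circ\gamma$ is a reduced cycle in $\cov{H}$, hence lies in $\core{H}$. It follows that $\pi(\core{K})\subseteq\core{H}$, so $\pi$ restricts to a label-preserving map of $\grafos$ $\psi:\core{K}\rar\core{H}$. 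Each edge at a vertex $v\in\core{K}$ is sent to an edge at $\psi(v)\in\core{H}$ with the same label and orientation, so $L(v)\subseteq L(\psi(v))$.

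Then I would transfer the trichotomy. Fix $v\in\core{K}$ and set $w=\psi(v)$. By fineness of $\varphi$ on $H$, exactly one of (i), (ii), (iii) holds at $w$. If $L(w)\cap A=\emptyset$, then $L(v)\cap A=\emptyset$ and $v$ falls into (i). If $L(w)\subseteq A$, then $L(v)\subseteq A$ and $v$ falls into (ii). Finally, if $a\in L(w)$ and $L(w)\subseteq A\cup\{a\}$, then $L(v)\subseteq A\cup\{a\}$; depending on whether $a\in L(v)$ or not, $v$ falls into case (iii) or into case (ii). In every situation $v$ belongs to exactly one of the three cases, so the action of $\varphi$ on $K$ is $\fine$.

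The only subtlety I foresee is verifying that the covering $\pi:\cov{K}\rar\cov{H}$ really is a morphism of $\grafos$ and sends $\core{K}$ into $\core{H}$, but both facts are immediate from the definition of the covers and the fact that covering maps preserve local injectivity of loops. Once $\psi$ is in hand, the lemma is just an inclusion check on the local letter sets $L(v)$.
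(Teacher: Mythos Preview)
Your proof is correct and follows essentially the same approach as the paper: construct the label-preserving map $\core{K}\rar\core{H}$ induced by the inclusion $K\sgr H$, observe $L(v)\subseteq L(\psi(v))$, and read off the trichotomy. The paper simply invokes this map (calling it $i_*$) without spelling out its construction, and it does not separate out the sub-case where $\psi(v)$ falls into (iii) but $v$ lands in (ii); your treatment of that point is actually more careful than the original.
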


\begin{proof}[Proof of lemma \ref{finesubgroups}]
It is enough to notice that for every vertex $u\in\core{K}$ we have that its image $i_*(u)=v\in\core{H}$ satisfies $L(u)\subseteq L(v)$. Since the vertex $v$ satisfies the tricothomy of definition \ref{fine}, so does $u$.
\end{proof}

\begin{myrmk}
In the hypothesis of lemma \ref{finesubgroups}, we have that, for each subgroup $K\sgr H$, the automorphism $\varphi$ either strictly decreases the size of $\core{K}$, or it acts on $K$ as a conjugation by an element of $F_n$. We can actually be more precise than just that. Consider the map $\ol{q}:\core{H}\rar\core{\varphi(H)}$, and let $G\subseteq\core{H}$ be the subgraph given by the union of all the edges which are not collapsed by $q$. Observe that the inclusion $i:K\rar H$ induces a locally injective label-preserving map of graphs $i_*:\core{K}\rar\core{H}$. Then, $\varphi$ acts on $K$ as a conjugation automorphism if and only if $i_*(\core{K})\subseteq G$.
\end{myrmk}

We conclude this section with a technical lemma which will be useful to us later. Let again $K\sgr H\sgr F_n$ be finitely generated non-trivial subgroups. Let $i:K\rar H$ be the inclusion, consider the map of graphs $i_*:\core{H}\rar\core{K}$ and consider the subgraph $i_*(\core{K})\subseteq\core{H}$. For an automorphism $\varphi:F_n\rar F_n$, let $j:\varphi(K)\rar\varphi(H)$ be the inclusion, let $j_*:\core{\varphi(H)}\rar\core{\varphi(K)}$ be the corresponding map of graphs, and consider the subgraph $j_*(\core{\varphi(K)})\subseteq\core{\varphi(H)}$.

\begin{mylemma}\label{fineimage}
Let $K\sgr H\sgr F_n$ be non-trivial finitely generated subgroups. Let $\varphi=(A,a)$ be a Whitehead automorphism such that the action of $\varphi$ on $K$ is $\fine$. Then, with the above notation, $j_*(\core{\varphi(K)})$ has at most as many edges as $i_*(\core{K})$.
\end{mylemma}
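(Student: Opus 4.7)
The plan is to bound $\abs{E(j_*(\core{\varphi(K)}))}$ by $\abs{E(i_*(\core{K}))}$ via a coset computation that sidesteps any finesse hypothesis on $H$. Since $\varphi$ acts $\fine$ on $K$, theorems \ref{quotient} and \ref{quotientmap} provide a label-preserving combinatorial quotient $\bar q : \core{K} \to \core{\varphi(K)}$ that collapses one $a$-edge at each case-(iii) vertex and is an isomorphism on the remaining (``non-collapsed'') edges. Set $\sigma = j_* \circ \bar q : \core{K} \to \core{\varphi(H)}$; this is a map of $\grafos$ (sending each edge either to an edge with the same label or collapsing it to a vertex) whose image is exactly $j_*(\core{\varphi(K)})$.

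The heart of the argument is the following claim: if $u, u' \in \core{K}$ satisfy $i_*(u) = i_*(u')$, then $\sigma(u) = \sigma(u')$, and similarly for any two non-collapsed edges with the same image under $i_*$. To prove it, fix a basepoint in $\core{K}$ so that each vertex $u$ of $\bcore{K}$ corresponds to a right coset $K w_u$ via any path from the basepoint reading the word $w_u \in F_n$. Then $i_*(u)$ corresponds to $H w_u$, $\bar q(u)$ corresponds to $\varphi(K) \varphi(w_u) = \varphi(K w_u)$, and therefore $\sigma(u) = j_*(\bar q(u))$ corresponds to $\varphi(H) \varphi(w_u) = \varphi(H w_u)$. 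Hence $H w_u = H w_{u'}$ forces $\varphi(H w_u) = \varphi(H w_{u'})$, i.e.\ $\sigma(u) = \sigma(u')$. Since a non-collapsed edge $e$ has $\sigma(e)$ determined by its label (which $\sigma$ preserves) together with the image of its source vertex (using that $\core{\varphi(H)}$ is folded), the vertex claim upgrades to the edge claim.

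The main obstacle is checking that the combinatorial quotient $\bar q$ of theorem \ref{quotient} really does realize the desired coset map $K w \mapsto \varphi(K) \varphi(w)$ on vertices; this is exactly the content of theorem \ref{quotientmap}, which identifies $\bar q$ (up to homotopy rel the $0$-skeleton) with the subdivide-fold-retract map $r$, and the latter is manifestly natural with respect to subgroup inclusion. Granted the claim, the count is routine: the restriction of $\sigma$ to non-collapsed edges is already surjective onto $j_*(\core{\varphi(K)})$ (collapsed edges contribute nothing), and by the claim the equivalence relation ``same image under $\sigma$'' on this set is coarser than the equivalence relation ``same image under $i_*$''. Hence $\abs{E(j_*(\core{\varphi(K)}))}$ is at most the number of $i_*$-classes among non-collapsed edges of $\core{K}$, which is at most $\abs{E(i_*(\core{K}))}$, as required.
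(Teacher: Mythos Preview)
Your argument is correct and follows essentially the same route as the paper: both show that non-collapsed edges of $\core{K}$ with the same $i_*$-image have the same $j_*\circ\ol q$-image, and then compare the two induced partitions. The paper simply asserts this key implication and passes directly to the edge count, whereas you supply an explicit justification via cosets (using theorem~\ref{quotientmap} to identify $\ol q$ with the subdivide--fold--retract map $r$ on vertices), which is a welcome clarification but not a genuinely different strategy.
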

\begin{proof}
Let $\ol{q}=\theta\circ q:\core{K}\rar\core{\varphi(K)}$ be as in theorem \ref{quotient}. Suppose we have two edges $e,e'$ in $\core{K}$ such that their image is the same edge $i_*(e)=i_*(e')$ of $\core{H}$, and suppose $\ol{q}$ does not collapse either of $e,e'$. Then the two edges $\ol{q}(e),\ol{q}(e')$ of $\core{\varphi(K)}$ are sent to the same edge $j_*(\ol{q}(e))=j_*(\ol{q}(e'))$ of $\core{\varphi(H)}$. We now divide the edges of $K$ in equivalence classes $E_1,...,E_\alpha$, where each equivalence class is the set of edges with a given image in $\core{H}$ (and in particular, the image $i_*(\core{K})$ has exactly $\alpha$ edges); similarly, we divide the edges of $\core{\varphi(K)}$ in equivalence classes $F_1,...,F_\beta$, based on their image in $\core{\varphi(H)}$. Then each $F_j$ is a union of some $E_i$s, implying that $\beta\le\alpha$, as desired.
\end{proof}

\begin{myrmk}
Lemma \ref{fineimage} becomes false if we try to count the number of vertices, instead of counting the number of edges.
\end{myrmk}

\subsection*{A relative version of Whithead's algorithm}

Let $F_n=\gen{x_1,...,x_n}$ and consider the free factor $\gen{x_1,...,x_k}$ for $1\le k\le n-1$.

\begin{mythm}\label{Whitehead4}
Let $w\in F_n$ be primitive and not a single letter. Suppose there is an automorphism $\theta:F_n\rar F_n$ such that $\theta(\gen{x_1,...,x_k})=\gen{x_1,...,x_k}$ and $\theta(w)=x_{k+1}$. Then there is a Whitehead automorphism $\varphi=(A,a)$ such that:

(i) $\varphi(x_i)=x_i$  for $i=1,...,k$.

(ii) The length of $\varphi(w)$ is strictly smaller than the length of $w$.

(iii) Every letter $a$, which is added to $w$ when applying $\varphi$ to $w$ letter-by-letter, immediately cancels (in the free reduction process).
\end{mythm}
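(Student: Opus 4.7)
The strategy is to apply the subgroup Whitehead machinery from Section~\ref{WhiteheadSubgroups} to the free factor $H = \gen{x_1,\ldots,x_k,w}$, choosing the Whitehead automorphism carefully so that it fixes $x_1,\ldots,x_k$. First, letting $\rho = \theta|_{\gen{x_1,\ldots,x_k}} \in \Aut(\gen{x_1,\ldots,x_k})$ and $\widetilde{\rho^{-1}}$ its extension to $F_n$ by the identity on $x_{k+1},\ldots,x_n$, I replace $\theta$ by $\widetilde{\rho^{-1}}\circ\theta$ and thereby assume $\theta(x_i)=x_i$ for $i\leq k$ while still $\theta(w)=x_{k+1}$. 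Then $H$ is a rank-$(k+1)$ free factor of $F_n$ containing $K=\gen{x_1,\ldots,x_k}$ as a free factor, and $\{x_1,\ldots,x_k,w,u_1,\ldots,u_{n-k-1}\}$ is a basis of $F_n$, with $u_j=\theta^{-1}(x_{k+1+j})$.

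In the main case where $\core{H}$ has more than one vertex, I apply Theorems \ref{Whitehead2} and \ref{finesse2} to $H$ to obtain a Whitehead automorphism $\varphi=(A,a)$ acting finely on $H$ and strictly reducing $\core{H}$. The basepoint $*$ of $\core{H}$ satisfies $L(*)\supseteq\{x_1^{\pm 1},\ldots,x_k^{\pm 1}\}$, so these letters form a clique in the Whitehead graph of $H$. The key step is to arrange that $*$ falls in case (i) of the trichotomy of \ref{finesse2}, meaning $L(*)\cap A=\emptyset$; this forces $A\cap\{x_1^{\pm 1},\ldots,x_k^{\pm 1}\}=\emptyset$ and hence $\varphi(x_i)=x_i$ for $i\leq k$. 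To establish this I revisit the proof of Theorem \ref{cutvertex2} using the basis above: each loop $x_i$ in the resulting graph $G$ is a single edge, and the folding sequence $G\to G^{(1)}\to\cdots\to G^{(l)}=R_n$ can, by the confluence of Stallings' folding, be ordered so that all folds on labels in $\{x_1,\ldots,x_k\}$ are performed first. The distinguished folding edge of the penultimate graph $G^{(l-1)}$ (figure \ref{prerosa}) then carries a label outside $\{x_1,\ldots,x_k\}$, and the cut vertex inferred from this structure lies outside $\{x_1^{\pm 1},\ldots,x_k^{\pm 1}\}$ with an associated set $A$ disjoint from the clique at $*$. The degenerate case where $\core{H}$ has a single vertex corresponds to $H$ being conjugate to a free factor of the form $\gen{x_1,\ldots,x_k,x_j}$ for some $j>k$; after a relabeling fixing $K$ and swapping $x_j\leftrightarrow x_{k+1}$, it is handled by direct inspection of the small class of Whitehead automorphisms of $F_{k+1}$ fixing $x_1,\ldots,x_k$.

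Given such a $\varphi$, property (i) holds by construction. For property (ii), Theorem \ref{quotientmap} provides a quotient map $\ol{q}:\core{H}\to\core{\varphi(H)}$ that collapses the $a$-edge at each vertex of $\core{H}$ in case (iii) of the trichotomy; the loop in $\bcore{H}$ reading $w$ traverses at least one such collapsed edge, so its image loop, which reads $\varphi(w)$ in $\bcore{\varphi(H)}$, has strictly smaller length. For property (iii), as in the proof of Theorem \ref{finesse}, the letters $a$ or $\ol{a}$ inserted when $\varphi$ is applied to $w$ letter-by-letter correspond precisely to the $a$-edges collapsed by $\ol{q}$, and hence cancel immediately in the free reduction.

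The main obstacle is the cut-vertex selection in the main case: one must combine the hypothesis on $\theta$ with a careful analysis of the Stallings folding sequence, using that $x_1,\ldots,x_k$ are single-letter generators to constrain the label of the last fold and hence the label of the cut vertex. The degenerate case is elementary by comparison, but still requires a direct check because it lies outside the range of applicability of Theorem \ref{Whitehead2}.
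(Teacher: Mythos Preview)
Your overall strategy---apply Theorems \ref{Whitehead2} and \ref{finesse2} to the free factor $H=\gen{x_1,\ldots,x_k,w}$ and then arrange that the basepoint of $\core{H}$ falls into case (i) of the trichotomy---is exactly the paper's approach. The derivation of properties (ii) and (iii) from the quotient description is also in the right spirit. The divergence is entirely in how you force the basepoint into case (i).

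The paper does this in one line using Lemma \ref{lemmino}: if the basepoint happens to fall into case (ii) or (iii) for $(A,a)$, replace $(A,a)$ by the complementary Whitehead automorphism $(A^c\setminus\{a,\ol a\},\ol a)$. These two automorphisms differ only by conjugation by $a$, so the new one still shrinks $\core{H}$ and is still fine, while the roles of cases (i) and (ii)/(iii) are exchanged at every vertex; in particular the basepoint is now in case (i), which immediately gives $\{x_1^{\pm1},\ldots,x_k^{\pm1}\}\cap A'=\emptyset$. No inspection of the folding sequence or of $G^{(l-1)}$ is needed, and no separate degenerate case arises in this formulation.

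Your route through the folding sequence has a genuine gap. Confluence (Proposition \ref{folding}) guarantees that the final graph and the number of folds of each label are independent of the order, but it does \emph{not} guarantee that all folds on labels in $\{x_1,\ldots,x_k\}$ can be performed before any others. A fold on a label $x_m$ with $m>k$ identifies two vertices and can bring together two $x_i$-edges ($i\le k$) that previously had distinct endpoints, thereby creating a new $x_i$-fold that was unavailable earlier. So ``do all $x_i$-folds first'' is not a legal reordering in general, and you cannot conclude that the doubled label in $G^{(l-1)}$ lies outside $\{x_1,\ldots,x_k\}$. Even granting that, a further argument would be needed to ensure the component chosen as $A$ avoids the clique $\{x_1^{\pm1},\ldots,x_k^{\pm1}\}$; the cut vertex lying outside that set is not by itself sufficient. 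Replacing this whole step by the complement trick of Lemma \ref{lemmino} closes the gap and shortens the proof considerably.
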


\begin{myrmk}
Notice that the word $w$ is not required to be cyclically reduced. In (ii) we mean the length and not the cyclic length. In (iii) we consider the free reduction process and not the cyclic reduction process.
\end{myrmk}

For the proof, we need the following straightforward lemma:

\begin{mylemma}\label{lemmino}
Consider the inner automorphism $\gamma_a(w)=aw\ol{a}$ of $F_n$. Then for every Whitehead automorphism $(A,a)$, the identity $(A,a)=\gamma_a\circ(A^c\setminus\{a,\ol a\},\ol a)$ holds.
\end{mylemma}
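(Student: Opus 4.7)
The plan is to verify the identity by direct evaluation on each generator of $F_n$, exploiting the fact that two group homomorphisms agree iff they agree on a generating set.

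First I would dispose of the generator that equals $a$ or $\ol a$: on this letter both $(A,a)$ and $(A^c\setminus\{a,\ol a\},\ol a)$ act as the identity (the first by definition, the second because $\ol a$ is its acting letter), and $\gamma_a(a)=a\cdot a\cdot\ol a=a$, so the two sides of the claimed identity match on this generator.

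Next I would fix a generator $x_j$ with $x_j\neq a,\ol a$ and run a four-way case analysis on which of $x_j,\ol{x_j}$ lie in $A$. Setting $B=A^c\setminus\{a,\ol a\}$, note that since $x_j,\ol{x_j}\notin\{a,\ol a\}$, membership in $A$ is exactly complementary to membership in $B$, so the four defining cases for $(A,a)$ on $x_j$ correspond bijectively to the four defining cases for $(B,\ol a)$ on $x_j$. In each case I would write down $(B,\ol a)(x_j)$ from the definition, apply $\gamma_a$, and freely reduce. For instance, if $x_j,\ol{x_j}\notin A$, then $x_j,\ol{x_j}\in B$ so $(B,\ol a)(x_j)=\ol a x_j a$, and $\gamma_a(\ol a x_j a)=x_j=(A,a)(x_j)$; if $x_j\in A$ and $\ol{x_j}\notin A$, then $(B,\ol a)(x_j)=x_j a$, and $\gamma_a(x_j a)=ax_j=(A,a)(x_j)$. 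The other two cases are symmetric. In every case the factors of $a,\ol a$ introduced by $\gamma_a$ cancel cleanly against those introduced by $(B,\ol a)$, so the verification is one line each.

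The main obstacle is purely bookkeeping: keeping the complementation between $A$ and $B$ straight, and remembering to handle the generator equal to $a$ or $\ol a$ separately so that the setup $A\subseteq\{x_1,\dots,x_n,\ol{x_1},\dots,\ol{x_n}\}\setminus\{a,\ol a\}$ is respected. There is no conceptual difficulty; once the four cases on generators are verified, the lemma follows because both sides of the claimed identity are endomorphisms of $F_n$ agreeing on a generating set.
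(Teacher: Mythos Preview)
Your proposal is correct and is exactly the natural verification: the paper itself does not prove this lemma, simply calling it ``straightforward'' and stating it without proof, so your generator-by-generator check is precisely the intended argument. The only thing to be mildly careful about is that the acting letter need not itself be a generator (it could be $\ol{x_i}$), but you handle this correctly by treating the generator equal to $a$ or $\ol a$ separately.
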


\begin{proof}[Proof of theorem \ref{Whitehead4}]
Consider the free factor $H=\gen{x_1,...,x_k}*\gen{w}$. Notice that $\core{H}$ consists of $\bcore{w}$ together with $k$ edges from the basepoint to itself, labeled with the letters $x_1,...,x_k$ (and here it is important that $k\ge1$). We apply theorems \ref{Whitehead2} and \ref{finesse2} to $H$ in order to get a Whitehead automorphism $\varphi=(A,a)$. If the basepoint of $\core{H}$ would fall into case (ii) or (iii) of the tricothomy of theorem \ref{finesse2}, then we apply lemma \ref{lemmino} and consider the Whitehead automorphism $\varphi=(A^c\setminus\{a,\ol a\},\ol a)$ instead. Then $\varphi$ satisfies all of the desired properties.
\end{proof}

We observe that theorem \ref{Whitehead4} can also be generalized to subgroups (and the proof is the same, so will be omitted).

\begin{mythm}\label{Whitehead5}
Let $H\sgr F_n$ be a free factor of rank $r\ge1$ and suppose that $\bcore{H}$ has at least two vertices. Suppose there is an automorphism $\theta:F_n\rar F_n$ such that $\theta(\gen{x_1,...,x_k})=\gen{x_1,...,x_k}$ and $\theta(H)=\gen{x_{k+1},...,x_{k+r}}$. Then there is a Whitehead automorphism $\varphi=(A,a)$ such that:

(i) $\varphi(x_1)=x_1$ and ... and $\varphi(x_k)=x_k$.

(ii) The graph $\bcore{\varphi(H)}$ has strictly fewer vertices and edges than $\bcore{H}$.

(iii) The trichotomy of theorem \ref{finesse2} holds at each vertex $v\in\bcore{H}$. Moreover, the basepoint always falls into case (i) of the tricothomy.
\end{mythm}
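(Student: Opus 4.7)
The plan is to mimic the proof of Theorem \ref{Whitehead4}, with $H$ playing the role of $\gen{w}$ and Theorems \ref{Whitehead2} and \ref{finesse2} for free factors playing the role of their cyclic-element counterparts. First, I would form the free factor $K = \gen{x_1,\ldots,x_k} * H \sgr F_n$; that $K$ is a free factor follows from $\theta(K) = \gen{x_1,\ldots,x_{k+r}}$. Since the basepoint carries the loops $x_1,\ldots,x_k$, the pointed core graph $\bcore{K}$ coincides with the unpointed core $\core{K}$ and can be read off from $\bcore{H}$ by attaching $k$ loops at the basepoint and folding as needed. The assumption that $\bcore{H}$ has more than one vertex is designed to guarantee that $\core{K}$ likewise has more than one vertex.

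Next, I would apply Theorems \ref{Whitehead2} and \ref{finesse2} to $K$ to obtain a Whitehead automorphism $\varphi = (A,a)$ whose action on $K$ is \fine{} and strictly reduces the size of the core. Since $\{x_1,\ol{x_1},\ldots,x_k,\ol{x_k}\} \subseteq L(*)$, if the basepoint $*$ lies in case (i) of the trichotomy I immediately get $\varphi(x_i) = x_i$ for $i = 1,\ldots,k$. If instead $*$ falls in case (ii) or (iii), the remark that $\ol a \in L(v)$ forces case (i) implies $a,\ol a \notin \{x_i,\ol{x_i}\}_{i=1}^k$ and $\{x_1,\ol{x_1},\ldots,x_k,\ol{x_k}\} \subseteq A$; Lemma \ref{lemmino} then allows me to replace $\varphi$ by $\psi = (A^c \setminus \{a,\ol a\}, \ol a)$, which still fixes each $x_i$, places $*$ in case (i), and—being related to $\varphi$ by the inner automorphism $\gamma_a$—preserves the strict size decrease of the core.

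Properties (i)--(iii) then pass from $K$ down to $H$. Because the resulting Whitehead automorphism fixes $x_1,\ldots,x_k$, one has $\varphi(K) = \gen{x_1,\ldots,x_k} * \varphi(H)$ and $\core{\varphi(K)}$ is obtained from $\bcore{\varphi(H)}$ by attaching the same $k$ loops; comparing vertex and edge counts in the pairs $(\bcore{H},\core{K})$ and $(\bcore{\varphi(H)},\core{\varphi(K)})$ yields (ii). The trichotomy on $\bcore{H}$ in (iii) is inherited from the trichotomy on $\core{K}$, since a non-basepoint vertex of $\bcore{H}$ has the same label set in $\bcore{H}$ and in $\core{K}$, and the basepoint has been arranged to lie in case (i).

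The delicate point, and the main obstacle, is verifying that when the swap $\varphi \mapsto \psi$ is triggered, the trichotomy persists at the non-basepoint vertices of $\bcore{H}$: in general, a blind substitution of acting letter can break the \fine{}ness of the action (for instance at a vertex with $L(v) = \{a,y\}$ and $y \in A^c \setminus \{a,\ol a\}$). Resolving this requires exploiting the constraint $\{x_1,\ol{x_1},\ldots,x_k,\ol{x_k}\} \subseteq A$ in the triggering cases together with the interpretation of $A^c \setminus \{a,\ol a\}$ as a ``dual'' cut at $\ol a$ in the Whitehead graph of $K$; this is precisely the step that parallels, and relies on, the analogous omitted verification in the proof of Theorem \ref{Whitehead4}.
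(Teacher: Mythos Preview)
Your proposal follows exactly the approach the paper intends: it explicitly says the proof of Theorem \ref{Whitehead5} ``is the same'' as that of Theorem \ref{Whitehead4}, and your outline (form $K=\gen{x_1,\dots,x_k}*H$, apply Theorems \ref{Whitehead2} and \ref{finesse2} to $K$, and use Lemma \ref{lemmino} to swap $(A,a)$ for $(A^c\setminus\{a,\ol a\},\ol a)$ when the basepoint falls in case (ii) or (iii)) reproduces the proof of Theorem \ref{Whitehead4} verbatim, with the passage from $\core{K}$ down to $\bcore{H}$ handled via Lemma \ref{finesubgroups} as you indicate.

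The subtlety you isolate in your last paragraph---whether the swapped automorphism still satisfies the trichotomy at every non-basepoint vertex---is genuine and is precisely the point the paper leaves implicit in its proof of Theorem \ref{Whitehead4} as well; it does not give a separate verification there either. So your proposal is not weaker than the paper's argument: you have matched it and correctly identified the one step that is asserted rather than checked.
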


\section{About computation of distances in the complex of free factors}\label{FreeFactorComplex}

For an element $w\in F_n$, we denote by $[w]$ the conjugacy class of that element. For a subgroup $H\sgr F_n$, we denote by $[H]$ the conjugacy class of that subgroup.

We are now going to define a simplicial complex $\FF{n}$, starting with its $0$-skeleton and its $1$-skeleton.  The $0$-skeleton $\FF{n}^0$ has a point $[H]$ for every conjugacy class of free factors $H\sgr F_n$. The $1$-skeleton $\FF{n}^1$ is defined as follows: add a $1$-simplex with vertices $[H_0],[H_1]$ if and only if $[H_0]\not=[H_1]$ and there are representatives $H_0'\in[H_0]$ and $H_1'\in[H_1]$ and a permutation $\sigma:\{0,1\}\rar\{0,1\}$ such that $H_{\sigma(0)}'\sgr H_{\sigma(1)}'$. Define $\FF{n}$ as the flag complex over the $1$-skeleton $\FF{n}^1$: we have a $k$-simplex with endpoints $[H_0],...,[H_k]$ if and only if $[H_0],...,[H_k]$ are pairwise connected by $1$-simplices in $\FF{n}^1$. Equivalently, we have a $k$-simplex with endpoints $[H_0],...,[H_k]$ if and only if $[H_0],...,[H_k]$ are pairwise distinct and there are representatives $H_0'\in[H_0],...,H_k'\in[H_k]$ and a permutation $\sigma:\{0,...,k\}\rar\{0,...,k\}$ such that $H_{\sigma(0)}'\sgr...\sgr H_{\sigma(k)}'$.

\begin{mydef}
The simplicial complex $\FF{n}$ defined above is called \textbf{complex of free factors}.
\end{mydef}

It is shown in \cite{Bestvina} that $\FF{n}$ is connected. We would like to determine whether there is an algorithm that, given two vertices of $\FF{n}$, gives as output their distance in a finite time, where distance is combinatorial distance in the $1$-skeleton $\FF{n}^1$. We here furnish algorithms for distances $1,2,3$, and also an algorithm for distance $4$ when one of the free factors has rank $n-1$.

\subsection*{Distance one}

It is easy to check whether two conjugacy classes of free factors $[H],[K]$ are at distance $1$ or not. Assume $\rank{H}\ge\rank{K}$. We look for representatives $H'\in[H]$ and $K'\in[K]$ with an inclusion $K'\sgr H'$. This is equivalent to looking for a locally injective map of graphs $\core{K}\rar\core{H}$. Each such map, if it exists, is uniquely determined by the image of a given vertex; thus we only have to deal with a finite number of tries.


\subsection*{Distance two}

We will rely on the following proposition:

\begin{myprop}\label{distance2}
Let $H,K$ be non-trivial free factors, and suppose that $\core{H}\sqcup\core{K}$ contains at least one edge with each label. Suppose there are free factors $H'\in[H]$ and $K'\in[K]$ and $J\not=F_n$ such that $H',K'\sgr J$. Then there is a Whitehead automorphism $\varphi=(A,a)$ such that $\core{\varphi(H)}\sqcup\core{\varphi(K)}$ has strictly fewer vertices and strictly fewer edges than $\core{H}\sqcup\core{K}$.
\end{myprop}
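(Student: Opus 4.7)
The plan is to generalize the argument of Theorem~\ref{cutvertex2} from the core graph of a single subgroup to the disjoint union $\core{H}\sqcup\core{K}$, regarded as a single $\grafo$ whose Whitehead graph is the superposition of the cliques $L(u)$ contributed by each vertex $u$. First I replace $H,K$ by the conjugates $H',K'\sgr J$ supplied by the hypothesis; since conjugation does not affect the core graph, I may assume outright that $H,K\sgr J$.

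Mirroring the construction in the proof of Theorem~\ref{cutvertex2} applied to $J$, I pick a basis of $J$, extend it by reduced words $w_1,\dots,w_r$ to a basis of $F_n$, form the auxiliary graph $\Gamma=\core{J}\cup\{\text{loops for the }w_i\}$ at the basepoint, and run a maximal folding sequence $\Gamma\rightarrow\Gamma^{(1)}\rightarrow\cdots\rightarrow\Gamma^{(l)}=R_n$. The penultimate $\Gamma^{(l-1)}$ takes the shape of Figure~\ref{prerosa} for some parameters $\alpha\le\beta$. Composing with the label-preserving inclusions $\core{H},\core{K}\hookrightarrow\core{J}$ I obtain label-preserving maps $\core{H},\core{K}\rightarrow\Gamma^{(l-1)}$, and by the standing hypothesis their combined image covers every edge-label of $R_n$.

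The key step is a case analysis locating a cut vertex of the Whitehead graph of $\core{H}\sqcup\core{K}$, directly analogous to the one in Theorem~\ref{cutvertex2}: if some edge of the disjoint union maps to the $v\rightarrow v'$ edge labeled $x_1$ in $\Gamma^{(l-1)}$ then $\ol{x_1}$ is a cut vertex, and if some edge maps instead to a $v\rightarrow v'$ edge labeled $x_{\alpha+i}$ then $\ol{x_{\alpha+i}}$ is a cut vertex. The delicate case is when no $v\rightarrow v'$ edge is crossed at all: the connectedness of each of $\core{H},\core{K}$ combined with full letter-coverage then forces $\beta=\alpha$ and, up to swapping, $H\sgr\gen{x_1,\dots,x_\alpha}$ and $K\sgr\gen{x_{\alpha+1},\dots,x_n}$. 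The hypothesis $J\neq F_n$ is essential here: it rules out the subcase in which both $\core{H}$ and $\core{K}$ are single-vertex roses (otherwise $\gen{H,K}=F_n$, contradicting $\gen{H,K}\sgr J$). Thus without loss of generality $\core{H}$ has at least two vertices, and $H$ is a free factor of $\gen{x_1,\dots,x_\alpha}$ by Corollary~\ref{freefactoromitambient}; applying Theorem~\ref{cutvertex2} inside $\gen{x_1,\dots,x_\alpha}$ yields a cut vertex of the Whitehead graph of $H$, which in this subcase remains a cut vertex of the Whitehead graph of $\core{H}\sqcup\core{K}$ since the Whitehead graphs of $H$ and $K$ sit on disjoint letter-sets.

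Given a cut vertex $a$, I define $\varphi=(A,a)$ exactly as in the proof of Theorem~\ref{finesse2}. Since $L(u)$ at each vertex $u$ of $\core{H}\sqcup\core{K}$ is a clique in the combined Whitehead graph, the trichotomy of Theorem~\ref{finesse2} holds at every such $u$; and the presence of an edge between $a$ and the set $A$ forces at least one $u$ into case (iii). Applied component-wise (cf.\ Lemma~\ref{precise}), the vertex/edge-count analysis of Theorem~\ref{Whitehead2} then produces a strict decrease in both the vertex and the edge count of whichever of $\core{H},\core{K}$ contains $u$, while the other component does not grow, which yields the conclusion. I expect the main obstacle to be the letter-disjoint case above, where the cut vertex cannot be read off from $J$ and one must descend to a sub-free-factor of $F_n$; here it is critical both that $J\neq F_n$ and that Corollary~\ref{freefactoromitambient} permits Whitehead's algorithm to be run inside the smaller ambient group.
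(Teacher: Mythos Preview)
Your argument is correct, but it takes a different route from the paper's. The paper does not redo the Stallings--folding cut-vertex analysis for the disjoint union $\core{H}\sqcup\core{K}$. Instead it applies Theorems~\ref{Whitehead2} and~\ref{finesse2} directly to the ambient free factor $J$: this produces a chain $\varphi_1,\dots,\varphi_l$ of fine Whitehead moves reducing $\core{J}$ to a rose missing a label. The transfer lemmas \ref{finesubgroups} and \ref{precise} then say that each $\varphi_i$ is automatically fine on $H$ and on $K$, so each step either leaves $\core{H}\sqcup\core{K}$ unchanged or strictly shrinks it; the full-label hypothesis rules out the possibility that every step is trivial, and the first nontrivial $\varphi_m$ is the desired automorphism. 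This bypasses entirely the ``delicate case'' you had to handle (where $H$ and $K$ fall into complementary standard free factors and the cut vertex must be found by descending into $\gen{x_1,\dots,x_\alpha}$). Your approach has the merit of exhibiting a cut vertex of the combined Whitehead graph explicitly, but the price is the extra casework; the paper's approach is shorter precisely because it exploits the machinery already built in Lemmas~\ref{finesubgroups} and~\ref{precise}, which is after all the point of that machinery.
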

\begin{proof}
Since $J$ is a free factor, by a recursive application of theorems \ref{Whitehead2} and \ref{finesse2}, we obtain a chain of Whitehead automorphisms $\varphi_1,...,\varphi_l$ such that $\core{\varphi_l\circ...\circ\varphi_1(J)}$ is a rose with labels only in $\{x_1,...,x_{n-1}\}$. By lemmas \ref{finesubgroups} and \ref{precise}, we have that either $\core{\varphi_1(H)}\sqcup\core{\varphi_1(K)}=\core{H}\sqcup\core{K}$ or $\core{\varphi_1(H)}\sqcup\core{\varphi_1(K)}$ has strictly fewer vertices and strictly fewer edges than $\core{H}\sqcup\core{K}$. If $\core{\varphi_1(H)}\sqcup\core{\varphi_1(K)}=\core{H}\sqcup\core{K}$ then we repeat the reasoning with $\varphi_2$ instead of $\varphi_1$; and so on. If $\core{\varphi_l\circ...\circ\varphi_1(H)}\sqcup\core{\varphi_l\circ...\circ\varphi_1(K)}=\core{H}\sqcup\core{K}$ then we have a contradiction, since $\core{\varphi_l\circ...\circ\varphi_1(H)}\sqcup\core{\varphi_l\circ...\circ\varphi_1(K)}$ only contains edges with the labels $\{x_1,...,x_{n-1}\}$ while $\core{H}\sqcup\core{K}$ contains edges with all possible labels by hypothesis. So we can take the smallest $m$ such that $\core{\varphi_m(H)}\sqcup\core{\varphi_m(K)}\not=\core{H}\sqcup\core{K}$, and the Whitehead automorphism $\varphi_m$ satisfies the thesis.
\end{proof}

Let $[H],[K]$ be conjugacy classes of non-trivial free factors. We want to check (i) whether or not there are representatives with non-trivial intersection and (ii) whether or not there are representatives contained in a common proper free factor.

For (i) it is possible to use the technique explained in \cite{Stallings}. There are representatives $H'\in[H]$ and $K'\in[K]$ with non-trivial intersection if and only if the pullback of the two graphs $\core{H}$ and $\core{K}$ contains a non-trivial cycle.

For (ii) we apply proposition \ref{distance2} repeatedly. If $\core{H}\sqcup\core{K}$ contains only edges with labels from $\{x_1,...,x_{n-1}\}$ (or from any other proper subset of $\{x_1,...,x_n\}$) then they are at distance two; otherwise we look for a Whitehead transformation which strictly reduces the number of vertices of $\core{H}\sqcup\core{K}$: if we don't find it, then they are not at distance two, if we do then we apply it and reiterate the reasoning.

\subsection*{Distance three}

Given two conjugacy classes of free factors $[H],[K]$, we want to check whether there are representatives $H'\in[H]$ and $K'\in[K]$ and non-trivial free factors $I,J$ such that $H',J\sgr I$ and $J\sgr K'$ (there is also a symmetric check to do, but it is completely analogous).

Define the finite oriented graph $\Theta$ as follows. The graph $\Theta$ has one vertex corresponding to each pair of core folded $\grafos$ $(A,B)$ such that $A$ has at most as many edges as $\core{H}$ and $B$ has at most as many edges as $\core{K}$. There is an oriented edge from $(A,B)$ to $(C,D)$ if and only if there is a Whitehead automorphism $\varphi$ such that $C=\core{\fold{\subd{\varphi}{A}}}$ and $D$ is isomorphic to a subgraph of $\core{\fold{\subd{\varphi}{B}}}$; in that case we label that edge of $\Theta$ with the automorphism $\varphi$.

Suppose now there are non-trivial free factors $I,J$ such that $H',J\sgr I$ and $J\sgr K'$. The inclusion $j:J\rar K'$ gives a locally injective map $j_*:\core{J}\rar\core{K}$, and thus a subgraph $j_*(\core{J})\subseteq\core{K}$: the pair $(\core{H},j_*(\core{J}))$ is a vertex of $\Theta$. If $\core{I}$ contains only edges with labels from $\{x_1,...,x_{n-1}\}$, then we see that the pair of graphs $(\core{H},j_*(\core{J}))$ only contains edges with those labels too. If $\core{I}$ contains at least one edge with each label, then by theorems \ref{Whitehead2} and \ref{finesse2} there is a Whitehead automorphism $\varphi$ such that $\core{\varphi(I)}$ has strictly fewer edges than $\core{I}$, and such that for each vertex in $\core{I}$ the tricothomy of theorem \ref{finesse2} holds. In particular, by lemmas \ref{precise}, \ref{finesubgroups}, \ref{fineimage}, we have that the number of edges of $\core{H}$ and of $j_*(\core{J})$ does not increase either. This means that the pair $(\core{\varphi(H)},j_*(\core{\varphi(J)}))$ is a vertex of $\Theta$, and that $\Theta$ contains an edge labeled $\varphi$ and going from $(\core{H},j_*(\core{J}))$ to $(\core{\varphi(H)},j_*(\core{\varphi(J)}))$ (here we are using proposition \ref{subdivision}).

We now reiterate the same reasoning. By theorems \ref{Whitehead2} and \ref{finesse2}, we can take a finite sequence of Whitehead automorphisms $\varphi_1,...,\varphi_l$ such that $\varphi_i$ strictly reduces the number of edges of $\core{\varphi_{i-1}\circ...\circ\varphi_1(I)}$, such that for each vertex of $\core{\varphi_{i-1}\circ...\circ\varphi_1(I)}$ the tricothomy of theorem \ref{finesse2} holds, and such that $\varphi_l\circ...\circ\varphi_1(I)$ only contains edges with labels from $\{x_1,...,x_{n-1}\}$. Then this produces a path in $\Theta$ with vertices $(\core{\varphi_i\circ...\circ\varphi_1(H)},j_*(\core{\varphi_i\circ...\circ\varphi_1(J)}))$ and which goes from the pair $(\core{H},j_*(\core{J}))$ to a pair containing only edges with labels in $\{x_1,...,x_{n-1}\}$. Since the graph $\Theta$ is finite, there is an algorithm that tells us whether such a path in $\Theta$ exists or not.

Conversely, given two conjugacy classes of free factors $[H],[K]$, suppose there is a path in $\Theta$ with vertices $(A_1,B_1),...,(A_l,B_l)$ and with an edge labeled $\varphi_i$ going from $(A_i,B_i)$ to $(A_{i+1},B_{i+1})$, such that $A_1=\core{H}$ and $B_1$ is a subgraph of $\core{K}$, and such that $A_l,B_l$ only contain edges with labels in $\{x_1,...,x_{n-1}\}$. Then we fix basepoints in $A_l$ and $B_l$ and we set $\psi=\varphi_1^{-1}\circ...\circ\varphi_l^{-1}$: we get a segment of length three in $\FF{n}^1$ connecting $[H]$ and $[K]$, with vertices $[H]=[\psi(\pi_1(A_l))]$ and $[I]=[\psi(\gen{x_1,...,x_{n-1}})]$ and $[J]=[\psi(\pi_1(B_l))]$ and $[K]$.

Thus, given conjugacy classes of free factors $[H]$ and $[K]$, the existence of non-trivial free factors $I,J$ such that $H',J\sgr I$ and $J\sgr K'$ is equivalent to the existence of a path in $\Theta$ from a vertex of the form $(\core{H},B_1)$, with $B_1\subseteq\core{K}$, to a vertex of the form $(A_l,B_l)$, where $A_l\sqcup B_l$ does not use all the labels in $\{x_1,...,x_n\}$. This yields an algorithm to check whether two vertices of $\FF{n}$ are at distance three or not.
%

\subsection*{About distance four}

We would like to check whether two conjugacy classes of free factors $[H],[K]$ are at distance at most four in $\FF{n}$. In order to achieve this, we need to check two conditions:

\begin{enumerate}
\item Whether or not there are representatives $H'\in[H]$ and $K'\in[K]$ and non-trivial free factors $J_1,J_2,J_3$ such that $J_1\sgr H'$ and $J_1,J_3\sgr J_2$ and $J_3\sgr K'$.
\item Whether or not there are representatives $H'\in[H]$ and $K'\in[K]$ and non-trivial free factors $J_1,J_2,J_3$ such that $H',J_2\sgr J_1$ and $J_2,K'\sgr J_3$.
\end{enumerate}

We here furnish an algorithm to check condition 1.

\begin{myrmk}
In the particular case when $\rank{H}=n-1$, condition 2 reduces to checking distance three. In particular, when one of the free factors has rank $n-1$, we have an algorithm to check whether they are at distance four or not.
\end{myrmk}

The technique is the same as for distance three. Consider the oriented graph $\Omega$ defined as follows. We have one vertex for each pair of core folded $\grafos$ $(A,B)$ such that $A$ has at most as many edges as $\core{H}$ and $B$ has at most as many edges as $\core{K}$. There is an oriented edge from $(A,B)$ to $(C,D)$ if and only if there is a Whitehead automorphism $\varphi$ such that $C$ is isomorphic to a subgraph of $\core{\fold{\subd{\varphi}{A}}}$ and $D$ is isomorphic to a subgraph of $\core{\fold{\subd{\varphi}{B}}}$; in that case we label that edge of $\Omega$ with the automorphism $\varphi$.

Suppose there are representatives $H'\in[H]$ and $K'\in[K]$ and non-trivial free factors $J_1,J_2,J_3$ such that $J_1\sgr H'$ and $J_1,J_3\sgr J_2$ and $J_3\sgr K'$. By means of theorems \ref{Whitehead2} and \ref{finesse2}, we take a chain of Whitehead automorphisms $\varphi_1,...,\varphi_l$ such that $\varphi_{i+1}$ strictly reduces the number of edges of $\core{\varphi_i\circ...\circ\varphi_1(J_2)}$, and such that the tricothomy of theorem \ref{finesse2} holds too. By lemmas \ref{precise}, \ref{finesubgroups}, \ref{fineimage}, we have that this produces a path $(A_i,B_i)$ in $\Omega$, where $A_i$ is the image the map $\core{\varphi_i\circ...\circ\varphi_1(J_1)}\rar\core{\varphi_i\circ...\circ\varphi_1(H)}$ induced by the inclusion $J_1\sgr K'$, and $B_i$ is the image of the map $\core{\varphi_i\circ...\circ\varphi_1(J_3)}\rar\core{\varphi_i\circ...\circ\varphi_1(K)}$ induced by the inclusion $J_3\sgr K'$. The starting point $(A_1,B_1)$ of the path is given by two subgraphs of $\core{H}$ and $\core{K}$ respectively, and the endpoint $(A_l,B_l)$ has the property that $A_l\sqcup B_l$ only contains edges with labels from a proper subset of $\{x_1,...,x_n\}$.

Conversely, suppose there is a path $(A_1,B_1),...,(A_l,B_l)$ in $\Omega$ with an edge from $(A_i,B_i)$ to $(A_{i+1},B_{i+1})$ labeled $\varphi_i$, and such that $A_1,B_1$ are subgraphs of $\core{H},\core{K}$ respectively, and $A_l\sqcup B_l$ contains only edges with labels in $\{x_1,...,x_{n-1}\}$. Then we fix basepoints in $A_l$ and $B_l$, we set $\psi=\varphi_1^{-1}\circ...\circ\varphi_l^{-1}$, and we produce the free factors $J_1=\psi(\pi_1(A_l))$ and $J_2=\psi(\gen{x_1,...,x_{n-1}})$ and $J_3=\psi(\pi_1(B_l))$. For these free factors, there are representatives $H'\in[H]$ and $K'\in[K]$ such that $J_1\sgr H'$ and $J_1,J_3\sgr J_2$ and $J_3\sgr K'$, as desired.

Since the graph $\Omega$ is finite, we obtain an algorithm to check condition 1.

\bibliographystyle{alpha}
\nocite{*}
\bibliography{bibliography.bib}

\end{document}